\begin{document}
%
%
\theoremstyle{plain}
\swapnumbers
    \newtheorem{thm}[figure]{Theorem}
    \newtheorem{prop}[figure]{Proposition}
    \newtheorem{lemma}[figure]{Lemma}
    \newtheorem{keylemma}[figure]{Key Lemma}
    \newtheorem{corollary}[figure]{Corollary}
    \newtheorem{fact}[figure]{Fact}
    \newtheorem{subsec}[figure]{}
    \newtheorem*{propa}{Proposition A}
    \newtheorem*{thma}{Theorem A}
    \newtheorem*{thmb}{Theorem B}
    \newtheorem*{thmc}{Theorem C}
    \newtheorem*{thmd}{Theorem D}
\theoremstyle{definition}
    \newtheorem{defn}[figure]{Definition}
    \newtheorem{example}[figure]{Example}
    \newtheorem{examples}[figure]{Examples}
    \newtheorem{notn}[figure]{Notation}
    \newtheorem{summary}[figure]{Summary}
    \newtheorem{question}[figure]{Question}
\theoremstyle{remark}
\newtheorem{remark}[figure]{Remark}
        \newtheorem{construction}[figure]{Construction}
        \newtheorem{remarks}[figure]{Remarks}
        \newtheorem{warning}[figure]{Warning}
    \newtheorem{assume}[figure]{Assumption}
    \newtheorem{ack}[figure]{Acknowledgements}
\renewcommand{\thefigure}{\arabic{section}.\arabic{figure}}
%
%
%
\newcommand{\myfigure}[2][]
{\stepcounter{figure}\begin{equation}
     \tag{\thefigure}{#1}\vcenter{#2}\end{equation}}
\newenvironment{myeq}[1][]
{\stepcounter{figure}\begin{equation}\tag{\thefigure}{#1}}
{\end{equation}}
\newcommand{\myeqn}[2][]
{\stepcounter{figure}\begin{equation}
     \tag{\thefigure}{#1}\vcenter{#2}\end{equation}}
\newcommand{\mydiag}[2][]{\myeq[#1]\xymatrix{#2}}
\newcommand{\mydiagram}[2][]
{\stepcounter{figure}\begin{equation}
     \tag{\thefigure}{#1}\vcenter{\xymatrix{#2}}\end{equation}}
\newcommand{\mysdiag}[2][]
{\stepcounter{figure}\begin{equation}
     \tag{\thefigure}{#1}\vcenter{\xymatrix@R=15pt@C=40pt{#2}}\end{equation}}
\newcommand{\myrdiag}[2][]
{\stepcounter{figure}\begin{equation}
     \tag{\thefigure}{#1}\vcenter{\xymatrix@R=10pt@C=25pt{#2}}\end{equation}}
\newcommand{\mytdiag}[2][]
{\stepcounter{figure}\begin{equation}
     \tag{\thefigure}{#1}\vcenter{\xymatrix@R=19pt@C=50pt{#2}}\end{equation}}
\newcommand{\myudiag}[2][]
{\stepcounter{figure}\begin{equation}
     \tag{\thefigure}{#1}\vcenter{\xymatrix@R=25pt@C=50pt{#2}}\end{equation}}
\newcommand{\myvdiag}[2][]
{\stepcounter{figure}\begin{equation}
     \tag{\thefigure}{#1}\vcenter{\xymatrix@R=13pt@C=16pt{#2}}\end{equation}}
\newcommand{\mywdiag}[2][]
{\stepcounter{figure}\begin{equation}
     \tag{\thefigure}{#1}\vcenter{\xymatrix@R=5pt@C=53pt{#2}}\end{equation}}
\newcommand{\myxdiag}[2][]
{\stepcounter{figure}\begin{equation}
     \tag{\thefigure}{#1}\vcenter{\xymatrix@R=6pt@C=30pt{#2}}\end{equation}}
\newcommand{\myydiag}[2][]
{\stepcounter{figure}\begin{equation}
     \tag{\thefigure}{#1}\vcenter{\xymatrix@R=18pt@C=35pt{#2}}\end{equation}}
\newcommand{\myzdiag}[2][]
{\stepcounter{figure}\begin{equation}
     \tag{\thefigure}{#1}\vcenter{\xymatrix@R=1pt@C=33pt{#2}}\end{equation}}
%
%
\newenvironment{mysubsection}[2][]
{\begin{subsec}\begin{upshape}\begin{bfseries}{#2.}
\end{bfseries}{#1}}
{\end{upshape}\end{subsec}}
\newenvironment{mysubsect}[2][]
{\begin{subsec}\begin{upshape}\begin{bfseries}{#2\vsn.}
\end{bfseries}{#1}}
{\end{upshape}\end{subsec}}
\newcommand{\sect}{\setcounter{figure}{0}\section}
%
%
\newcommand{\wh}{\ -- \ }
\newcommand{\wwh}{-- \ }
\newcommand{\w}[2][ ]{\ \ensuremath{#2}{#1}\ }
\newcommand{\ww}[1]{\ \ensuremath{#1}}
\newcommand{\www}[2][ ]{\ensuremath{#2}{#1}\ }
\newcommand{\wwb}[1]{\ \ensuremath{(#1)}-}
\newcommand{\wb}[2][ ]{\ (\ensuremath{#2}){#1}\ }
\newcommand{\wref}[2][ ]{\ (\ref{#2}){#1}\ }
\newcommand{\wwref}[3][ ]{\ (\ref{#2})-(\ref{#3}){#1}\ }
%
%
\newcommand{\hs}{\hspace*{5 mm}}
\newcommand{\hsm}{\hspace*{2 mm}}
\newcommand{\hsn}{\hspace{2 mm}}
\newcommand{\hssp}{\hspace*{55 mm}}
\newcommand{\hsp}{\hspace*{15 mm}}
\newcommand{\vs}{\vspace{5 mm}}
\newcommand{\vsm}{\vspace{3 mm}}
\newcommand\scalemath[2]{\scalebox{#1}{\mbox{\ensuremath{\displaystyle #2}}}}
\newcommand{\xra}[1]{\xrightarrow{#1}}
\newcommand{\xepic}[1]{\xrightarrow{#1}\hspace{-5 mm}\to}
%
%
\newcommand{\ab}{\operatorname{ab}}
\newcommand{\cart}{\operatorname{cart}}
\newcommand{\Cl}{\operatorname{Cl}}
\newcommand{\coc}{\operatorname{co}}
\newcommand{\Cof}{\operatorname{Cof}}
\newcommand{\cons}{\operatorname{cons}}
\newcommand{\Coker}{\operatorname{Coker}}
\newcommand{\colim}{\operatorname{colim}}
\newcommand{\hocolim}{\operatorname{hocolim}}
\newcommand{\colimit}[1]
         {\raisebox{-1.7ex}{$\stackrel{\textstyle\colim}{\scriptstyle{#1}}$}}
\newcommand{\csk}[1]{\operatorname{csk}\sb{#1}}
\newcommand{\End}{\operatorname{End}}
\newcommand{\Ex}{\operatorname{Ex}}
\newcommand{\Fib}{\operatorname{Fib}}
\newcommand{\fin}{\operatorname{fin}}
\newcommand{\Fin}{\operatorname{Fin}}
\newcommand{\hocofib}{\operatorname{hocofib}}
\newcommand{\Ho}{\operatorname{Ho}}
\newcommand{\ho}{\operatorname{ho}}
\newcommand{\holim}{\operatorname{holim}}
\newcommand{\Hom}{\operatorname{Hom}}
\newcommand{\Id}{\operatorname{Id}}
\newcommand{\id}{\operatorname{Id}}
\newcommand{\Image}{\operatorname{Im}}
\newcommand{\inc}{\operatorname{in}}
\newcommand{\Ker}{\operatorname{Ker}}
\newcommand{\lev}{\operatorname{lev}}
\newcommand{\Mor}{\operatorname{Mor}}
\newcommand{\ND}{\operatorname{ND}}
\newcommand{\Obj}[1]{\operatorname{Obj}\,{#1}}
\newcommand{\op}{\sp{\operatorname{op}}}
\newcommand{\proj}{\operatorname{pr}}
\newcommand{\res}{\operatorname{res}}
\newcommand{\sd}{\operatorname{sd}}
\newcommand{\sk}[1]{\operatorname{sk}\sb{#1}}
%
%
\newcommand{\map}{\operatorname{map}}
\newcommand{\Map}{\operatorname{Map}}
\newcommand{\mapa}{\map\sb{\ast}}
%
%
\newcommand{\fN}{\mathfrak{N}}
\newcommand{\fC}{\mathfrak{C}}
\newcommand{\fD}{\mathfrak{D}}
\newcommand{\fd}{\mathfrak{d}}
\newcommand{\fG}{\mathfrak{G}}
\newcommand{\fH}{\mathfrak{H}}
\newcommand{\Po}[1]{P\sp{(#1)}}
%
%
\newcommand{\mA}{\mathscr{A}}
\newcommand{\mB}{\mathscr{B}}
\newcommand{\mC}{\mathscr{C}}
\newcommand{\mD}{\mathscr{D}}
\newcommand{\mK}{\mathscr{K}}
\newcommand{\mM}{\mathscr{M}}
\newcommand{\mT}{\mathscr{T}}
\newcommand{\mW}{\mathscr{W}}
\newcommand{\mX}{\mathscr{X}}
\newcommand{\mY}{\mathscr{Y}}
\newcommand{\mZ}{\mathscr{Z}}
%
%
\newcommand{\A}{\mathcal{A}}
\newcommand{\B}{\mathcal{B}}
\newcommand{\C}{\mathcal{C}}
\newcommand{\D}{\mathcal{D}}
\newcommand{\F}{\mathcal{F}}
\newcommand{\G}{\mathcal{G}}
\newcommand{\J}{\mathcal{J}}
\newcommand{\cL}{\mathcal{L}}
\newcommand{\M}{\mathcal{M}}
\newcommand{\cP}{\mathcal{P}}
\newcommand{\cS}{\mathcal{S}}
\newcommand{\U}{\mathcal{U}}
\newcommand{\W}{\mathcal{W}}
%
%
\newcommand{\CC}{\mathbb C}
\newcommand{\EE}[1]{{\mathbb E}\sb{#1}}
\newcommand{\LL}{\mathbb L}
\newcommand{\NN}{\mathbb N}
\newcommand{\QQ}{\mathbb Q}
\newcommand{\RR}{\mathbb R}
\newcommand{\bSS}{\mathbb S}
\newcommand{\ZZ}{\mathbb Z}
%
%
\newcommand{\CP}[1]{\CC\mathbf{P}\sp{#1}}
\newcommand{\bS}[1]{\mathbf{S}\sp{#1}}
\newcommand{\bT}[1]{\mathbf{T}\sb{#1}}
%
%
\newcommand{\Ab}{\mbox{\sf Ab}}
\newcommand{\AbGp}{\mbox{\sf AbGp}}
\newcommand{\Alg}{\mbox{\sf Alg}}
\newcommand{\Ch}{\mbox{\sf Ch}}
\newcommand{\DK}{\mbox{\sf DK}}
\newcommand{\Fun}{\mbox{\sf Fun}}
\newcommand{\ihom}{\mbox{\sf hom}}
\newcommand{\Kan}{\mbox{\sf Kan}}
\newcommand{\Lift}{\mbox{\sf Lift}}
%
%
\newcommand{\bA}{\mathbf{A}}
\newcommand{\bB}{\mathbf{B}}
\newcommand{\bC}{\mathbf{C}}
\newcommand{\Del}{\mathbf{\Delta}}
\newcommand{\Deln}[1]{\Del\sp{#1}}
\newcommand{\Delnk}[2]{\Deln{#1}\lo{#2}}
\newcommand{\Dop}{\Delta\op}
\newcommand{\Dres}{\Delta\sb{\res}}
\newcommand{\Drop}{\Dres\op}
\newcommand{\Dp}{\Delta\sb{+}}
\newcommand{\Dresp}{\Delta\sb{\res,+}}
\newcommand{\lra}[1]{\langle{#1}\rangle}
\newcommand{\lolr}[1]{\sb{\lra{#1}}}
\newcommand{\Drn}[1]{\Delta\lolr{#1}}
\newcommand{\Du}{\Del\sp{\bullet}}
\newcommand{\bN}{\mathbf{N}}
\newcommand{\Th}{\mathbf{Th}}
\newcommand{\Fd}{\F\sb{\bullet}}
\newcommand{\Gd}{G\sb{\bullet}}
\newcommand{\bW}{\mathbf{W}}
\newcommand{\Wd}{\bW\sb{\bullet}}
\newcommand{\Wu}{\bW\sp{\bullet}}
\newcommand{\bX}{\mathbf{X}}
\newcommand{\bx}{\mathbf{x}}
\newcommand{\bY}{\mathbf{Y}}
\newcommand{\bZ}{\mathbf{Z}}
%
%
\newcommand{\mBr}{\mathbf{Br}}
\newcommand{\BrCat}{\mbox{\sf BrCat}}
\newcommand{\Cat}{\mbox{\sf Cat}}
\newcommand{\cCat}{\mbox{\sf cCat}}
\newcommand{\sCat}{\mbox{\sf sCat}}
\newcommand{\FinCub}{\mbox{\sf FinCub}}
\newcommand{\FreeBrCat}{\mbox{\sf FreeBrCat}}
\newcommand{\mFreeBrCat}{\underline{\FreeBrCat}}
\newcommand{\FreeMon}{\mbox{\sf FreeMon}}
\newcommand{\mFreeMon}{\underline{\FreeMon}}
\newcommand{\mMnd}{\mathbf{Mnd}}
\newcommand{\Model}{\mbox{\sf Model}}
\newcommand{\mMon}{\mathbf{Mon}}
\newcommand{\Set}{\mbox{\sf Set}}
\newcommand{\Seta}{\Set\sb{\ast}}
\newcommand{\sSeta}{\sSet\sb{\ast}}
\newcommand{\sSet}{\Set\sb{\Delta}}
\newcommand{\sSetm}{\Set_\Delta }
\newcommand{\Sp}{\mbox{\sf Sp}}
\newcommand{\Mod}{\mbox{\sf Mod}}
\newcommand{\Top}{\mbox{\sf Top}}
\newcommand{\Tz}{\Top\sb{0}}
\newcommand{\Topa}{\Top\sb{\ast}}
\newcommand{\Tw}{\mbox{\sf Tw}}
\newcommand{\Un}{\mbox{\sf Un }}
\newcommand{\St}{\mbox{\sf St }}

\newcommand{\icat}{\Cat\sb{\infty}}
\newcommand{\cSet}{\mbox{\sf cSet}}
%
%
\newcommand{\sq}[1]{\square\sp{#1}}
%
%
\newcommand{\bbo}{\mathbf{1}}
\newcommand{\bbt}{\mathbf{2}}
\newcommand{\bbj}{[\mathbf{j}]}
\newcommand{\bk}{[\mathbf{k}]}
\newcommand{\bkm}{[\mathbf{k}-\bbo]}
\newcommand{\bmm}{[\mathbf{m}]}
\newcommand{\bbn}{\mathbf{n}}
\newcommand{\bn}{[\bbn]}
\newcommand{\bnp}{[\mathbf{n}+\bbo]}
\newcommand{\bon}{[\mathbf{1}]}
\newcommand{\bone}[1]{[\mathbf{1}]\sp{#1}}
\newcommand{\bne}{[\mathbf{1}]\sb{e}}
\newcommand{\bnm}{[\mathbf{n}-\mathbf{m}]}
\newcommand{\bnmp}{[\mathbf{n}-\mathbf{m}+\bbo]}
\newcommand{\bnrp}{[\mathbf{n}-\mathbf{r}+\bbo]}
\newcommand{\br}{[\mathbf{r}]}
\newcommand{\bmo}{[-\bbo]}
%
%
\newcommand{\iot}[1]{\iota\sb{#1}}
\newcommand{\vare}{\varepsilon}
\newcommand{\var}[1]{\vare\sb{#1}}
\newcommand{\varp}[1]{\vare'\sb{#1}}
\newcommand{\gam}[1]{\gamma\lo{#1}}

\title{Andr\'{e}-Quillen cohomology and $k$-invariants of simplicial categories}
%
\author[D.~Blanc]{David Blanc}
\address{Department of Mathematics,
         Faculty of Natural Sciences,
         University of Haifa,
         PO Box 3338
         3498838 Haifa,
         Israel}
\email{blanc@math.haifa.ac.il}
%
\author[N.J.\ Meadows]{Nicholas J.\ Meadows}
%
\address{Department of Mathematics,
         University of Bologna,
         Piazza di Porto San Donato, 
         Bologna 40126, 
         Italy}
\email{njmead81118@gmail.com}
\date{\today}
\makeatletter
\@namedef{subjclassname@2020}{%
  \textup{2020} Mathematics Subject Classification}
\makeatother
\subjclass[2020]{Primary: 55S45; Secondary: 18N60, 18D20, 18G90}
\keywords{Cohomology, simplicial category, $\infty$-category, $k$-invariant}

\begin{abstract}
Exploiting the Harpaz-Nuiten-Prasma interpretation of the Dwyer-Kan-Smith cohomology
of a simplicial category $\mX$, we obtain a cochain complex for the
Andr\'{e}-Quillen cohomology groups in which the $k$-invariants for $\mX$
take value. Given a map of simplicial categories \w{\phi:\mY\to\Po{n-1}\mX}
into a Postnikov section of $\mX$, we use a homotopy colimit decomposition
of $\mY$ to study the obstruction to lifting $\phi$ to \w[.]{\Po{n}\mX} In particular,
an explicit description of this obstruction for the boundary of a cube can be
used to recover various higher homotopy invariants of $\mX$.
\end{abstract}

\maketitle
\setcounter{section}{-1}

%
%
\sect{Introduction}
\label{cint}

The Postnikov tower \w{(\Po{n}X)\sb{n=0}\sp{\infty}} of a (path-connected) topological
space $X$ is a useful way to decompose it into simpler pieces,
but in order to pass from \w{\Po{n}X} to \w[,]{\Po{n+1}X}
in addition to knowing \w[,]{\pi\sb{n+1}X} we also need ``gluing information''
provided by the $n$-th $k$-invariant
\w{k\sb{n}\in H\sp{n+2}(\Po{n}X;\pi\sb{n+1}X)} (with twisted coefficients, if
$X$ is not simply connected). This is a rather opaque invariant which is hard to define,
and harder to understand conceptually.

Because the functors \w{\Po{n}} are monoidal, we may apply them objectwise to
the mapping spaces of a topologically or simplicially enriched category $\mX$,
obtaining a similar tower
\begin{myeq}[\label{eqpostnikov}]
\dotsc\to\Po{n+1}\mX\to\Po{n}\mX\to\Po{n-1}\mX\to\dotsc\Po{0}\mX
\end{myeq}
of such enriched categories, whose inverse limit is equivalent to $\mX$.

Again, there are $k$-invariants allowing one to recover \w{\Po{n+1}\mX} from
\w[,]{\Po{n}\mX} taking value in the cohomology theory
of simplicial categories defined by Dwyer, Kan, and Smith in
\cite{DKSmitO}. Our purpose here is to explain how the re-interpretation
of this cohomology theory as a form of Andr\'{e}-Quillen cohomology, given by
Harpaz, Nuiten, and Prasma in \cite{HNPrasA} yields a more useful
description of the $k$-invariants for $\mX$, which allows us to recover
certain higher-order homotopy invariants in $\mX$.

The observation underlying our approach is the fact that commuting \emph{cubes} in
an \wwb{\infty,1}category, rather than simplices, best encode higher order
information, because they exhibit decompositions of morphisms in more than one way.
If for any $n$ we denote by \w{\square\sp{n}} the simplicial category representing
the $n$-cube (see \S \ref{sccube}), our main technical result shows that these
suffice to describe simplicial categories:

\begin{thma}
For any simplicial category $\mX$, the corresponding
quasi-category $X$ is the homotopy colimit of its non-degenerate cubes
\w{\sq{n} \to X} \wb[.]{n\geq 0}
\end{thma}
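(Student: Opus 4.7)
The plan is to reduce the statement to the classical density theorem for quasi-categories, which asserts that the canonical map
\[
\hocolim_{\sigma: \Deln{n} \to X} \Deln{n} \xrightarrow{\ \sim\ } X
\]
is a weak equivalence, where the colimit is indexed over the simplex category of $X$. The task then becomes to replace simplices by non-degenerate cubes. Since $\sq{1}$ coincides with $\Deln{1}$, each cube $\sq{n}$ contains canonical non-degenerate $n$-simplices (one for every linear extension of the Boolean lattice $\{0,1\}^{n}$), and conversely $\Deln{n}$ embeds into $\sq{n}$ as the ``main diagonal'' simplex, we expect a natural comparison between the two homotopy colimits.

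My first step would be to construct the tautological map
\[
\Phi: \hocolim_{\operatorname{Cub}(X)} \sq{n} \longrightarrow X,
\]
where $\operatorname{Cub}(X)$ denotes the category whose objects are non-degenerate cubes $c:\sq{n}\to X$ and whose morphisms are the face inclusions of cubes, and to verify its naturality. I would then set up a cubical skeletal filtration $X^{(\leq n)}$ of $X$, with $X^{(\leq n)}$ the sub-quasi-category generated by all non-degenerate cubes of dimension at most $n$. The proof then proceeds by induction on $n$: the inductive step asserts that $X^{(\leq n)}$ is the homotopy pushout of $X^{(\leq n-1)}$ with the coproduct of non-degenerate $n$-cubes along their boundary inclusions $\partial\sq{n}\hookrightarrow\sq{n}$, mirroring the corresponding skeletal decomposition of the homotopy colimit on the left of $\Phi$. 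Taking the colimit over $n$ then gives that $\Phi$ is an equivalence.

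The main obstacle will be the induction step. Cubes have $2n$ codimension-one faces each, rather than the $n+1$ faces of a simplex, so the bookkeeping of the attaching data is considerably more elaborate. Even more delicate is the notion of ``non-degenerate'' cube: one must verify that the various degeneracy-like maps of cubes (projections, diagonals, possibly connections) do not cause collapsing in the colimit, so that the non-degenerate cubes genuinely suffice. An alternative route would be to invoke an $\infty$-categorical version of Quillen's Theorem A: construct a comparison functor from the cube category of $X$ to its simplex category and show it is homotopy cofinal, reducing the problem to verifying that the relevant slice categories have contractible classifying spaces. Either route ultimately rests on the same key geometric input, namely that the $n$-cube subdivides into $n!$ non-degenerate $n$-simplices indexed by linear orders on its vertices, which provides the bridge between the cubical and simplicial density statements.
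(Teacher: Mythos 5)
Your plan diverges from the paper's proof in both halves, and there is a real gap in the part that makes the theorem nontrivial.

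For the unrestricted statement (the homotopy colimit over \emph{all} cubes), your proposal is to bridge from the simplicial density theorem by comparing the simplex and cube categories of $X$. The paper instead works directly in cubical sets: it uses Cisinski's theory of regular localizers to identify the Joyal model structure on (marked) cubical sets with a Bousfield localization of a presheaf model, applies Lurie's density result \cite[Proposition 5.1.3.1]{LurieHTT} there, and then transfers to simplicial sets via the triangulation adjunction $T_\bullet \dashv U_\bullet$ and to simplicial categories via $\fC\dashv\fN$ (Lemmas \ref{lem4.2}--\ref{lem4.5}, Corollaries \ref{cor4.6}--\ref{cor4.7}, Theorem \ref{thm4.1}). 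Your route could plausibly reach the same endpoint for the decomposition over all cubes, though setting up the cofinality of cubes versus simplices correctly would still require something like the Cisinski/DKLS technology.

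The genuine gap is in the restriction to \emph{non-degenerate} cubes. First, the theorem as actually proved (Theorem \ref{thm5.8}) is not for an arbitrary simplicial category: it requires $\mX$ to be fibrant and each composition map to be a monomorphism (guaranteed for fibrant--cofibrant $\mX$ by Lemma \ref{lem5.5}). This hypothesis is load-bearing: Lemma \ref{lem5.6} uses monicity of composition to characterize degenerate cubes in terms of the mapping space from the initial to terminal vertex, and the cofinality argument of Theorem \ref{thm5.8} (showing $\square^{\ND}_{/X}\hookrightarrow\square_{/X}$ is cofinal via a weakly-initial-object argument, Lemma \ref{lem5.7}) hinges on it. Your proposal never identifies this hypothesis or an analogue. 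Second, your primary route --- a cubical skeletal filtration in which $X^{(\leq n)}$ is obtained from $X^{(\leq n-1)}$ by attaching non-degenerate $n$-cubes along $\partial\sq{n}$ --- is exactly what the paper rules out in Remark \ref{rmk5.4}: for a general cubical quasi-category (as opposed to a cubical complex in the sense of Definition \ref{def5.2}, where faces of non-degenerate cubes are non-degenerate), this strict-colimit description fails, and passing to a subdivision to force it destroys fibrancy and does not preserve Joyal homotopy type, so it cannot be used to compute the homotopy colimit. Your ``bridge'' observation that $\sq{n}$ triangulates into $n!$ simplices is likewise not enough: the triangulation functor is a left Quillen equivalence, but it does not intertwine the non-degenerate cubes of $X$ with the non-degenerate simplices, so it gives no direct control over the cofinality you would need. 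The essential missing idea is the cofinality of $\square^{\ND}_{/X}\hookrightarrow\square_{/X}$ itself, proved via the monic-composition hypothesis and the explicit identification of a weakly initial (in fact initial) non-degenerate factorization for every cube; your alternative Quillen Theorem A suggestion points at a cofinality argument, but aims it at the wrong comparison (cubes versus simplices rather than non-degenerate cubes versus all cubes).
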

\noindent See Theorem \ref{thm5.8} below\vsm.

This is then used to obtain an explicit formula for the cochain complex computing
the Andr\'{e}-Quillen cohomology groups housing the $k$-invariants:

\begin{thmb}
For every fibrant and cofibrant simplicial category $\mX$,
\w{H\sp{\ast}\sb{Q}(\mX,\pi\sb{n}\mX)} is the cohomology of
the limit over all cubes \w{\phi:\sq{k}\to\mX} of the cochain complex
$$
  \bigoplus\sb{\ell([x\sb{0}, y\sb{0}]) = 0}
  \pi\sb{n}(\Map\sb{\mX}(x\sb{0}, y\sb{0}), \phi\sb{0}) \to
  \cdots \to \bigoplus\sb{l[x\sb{n+1}, y\sb{n+1}] = n+1}
  \pi\sb{n}(\Map\sb{\mX}(x\sb{n+1}, y\sb{n+1}), \phi\sb{n+1})
$$
\noindent with \w{[x\sb{k},y\sb{k}]} ranging over all intervals of the cube.
\end{thmb}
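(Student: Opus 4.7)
The plan is to combine the Harpaz-Nuiten-Prasma identification of $H^{\ast}_Q$ with the homotopy colimit decomposition of Theorem A. For a fibrant-cofibrant simplicial category $\mX$ and an abelian group object $M$ over $\mX$, the cohomology $H^{\ast}_Q(\mX, M)$ is computed by the derived mapping space from the cotangent complex of $\mX$ into $M[\ast]$ (in the sense of \cite{HNPrasA}); as such it converts homotopy colimits in $\mX$ into homotopy limits of cochain complexes. Applying Theorem A, which presents $\mX$ as $\hocolim_{\phi: \sq{k} \to \mX} \sq{k}$ over its non-degenerate cubes, we obtain
$$
H^{\ast}_Q(\mX, \pi_n \mX) \; \cong \; H^{\ast}\Bigl(\holim_{\phi: \sq{k} \to \mX} C^{\bullet}(\sq{k}; \phi^{\ast}\pi_n\mX)\Bigr),
$$
reducing the computation to the Andr\'{e}-Quillen cochain complex of a single cube.

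Next I would compute $C^{\bullet}(\sq{k}; N)$ explicitly for a local system $N$ on $\sq{k}$. Since each mapping space of $\sq{k}$ is either empty or a point, an abelian group object over $\sq{k}$ amounts to an abelian group $N([x,y])$ attached to each interval of the cube. Using the standard cubical cell decomposition of $\sq{k}$, a cofibrant simplicial resolution can be built whose non-degenerate cells are precisely the sub-intervals; dually, the derivation complex is then identified with the normalised cubical cochain complex, whose $i$-th term is $\bigoplus_{\ell([x_i,y_i]) = i} N([x_i, y_i])$, with differential the alternating sum over codimension-one face inclusions. Executing this identification is the principal technical step, and I expect it to rely on an acyclic-models argument together with the explicit description of the Quillen cotangent complex of a poset category from \cite{HNPrasA}.

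Finally, substituting this description back into the homotopy limit of the first step and commuting direct sums past the limit --- legitimate by the functoriality of intervals under cube maps $\sq{k'} \to \sq{k}$ --- yields the cochain complex stated in Theorem B, with base-point $\phi_i$ provided by the restriction of $\phi$ to the sub-interval $[x_i, y_i] \subset \sq{k}$. The delicate bookkeeping I anticipate is verifying that the cubical differential assembles correctly through the $\pi_n$-operation at each base-pointed mapping space; this amounts to a naturality statement for the cotangent complex under sub-cube inclusions, combined with the standard description of $\pi_n$ of a mapping space of a simplicial category at a chosen base-point.
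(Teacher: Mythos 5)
Your proposal captures the correct overall architecture — HNP's identification of $H^{\ast}_Q$ together with the cubical homotopy colimit decomposition of Theorem A, reducing to the Andr\'{e}--Quillen cochain complex of a single cube and then reassembling — and this is indeed the strategy the paper follows. However, there are two substantive gaps.

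First, your final step slides from a \emph{homotopy} limit to the \emph{strict} limit appearing in the statement of Theorem B, with only the remark that this is ``legitimate by the functoriality of intervals under cube maps.'' That is not a justification: a homotopy limit of cochain complexes is generally not computed by the ordinary limit. The paper handles this by endowing the index category $(\DK\bone{\ast})^{\ND}_{/\mY}$ with a Reedy structure (inherited from the Reedy structure on $\square$ of \cite[Corollary 1.17]{DKLSattC}, with face maps degree-increasing and only identities degree-decreasing) and verifying that the diagram $\phi\mapsto C_{\phi}$ is Reedy fibrant in $\Ch^{+}(\AbGp)$: the natural maps to matching objects are the face-projections of Remark~\ref{projection} and hence surjective. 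Without some argument of this sort the passage $\holim\Rightarrow\lim$ is simply a gap, and it is the technical crux of the theorem. (Relatedly, the proof also needs an intermediate reduction through finite cubical \emph{complexes}, as in Lemma~\ref{finitecubes}, which you do not mention.)

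Second, your account of the single-cube computation is an anticipated argument rather than a proof: you sketch an ``acyclic-models argument'' against a cubical cell decomposition. The paper's mechanism is more specific and cleaner: by Theorem~\ref{thm2.6} the tangent $\infty$-category is identified with $\Fun(\Tw(\fN(\mX)),\Sp)$, Lemma~\ref{lem6.2} identifies $\Tw$ of a poset with its poset of intervals, and Lemma~\ref{lem6.4} computes the coefficient functor $\phi^{\ast}\pi_n\mX$ explicitly on intervals via pre- and post-composition; Proposition~\ref{thm6.5} then reads off the cochain complex as the derived limit over the interval poset. In particular the differential is \emph{not} just ``the alternating sum over codimension-one faces'' of the cubical complex, but has a precise description (Construction~\ref{con6.3}) built from the two one-sided interval inclusions $(\phi_a^b)^{\ast}$ and $(\phi_a^b)_{\ast}$ with a sign; your phrasing blurs the distinction between pre- and post-composition maps, which is exactly what makes $\partial^2 = 0$ work. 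You should fill in the single-cube step along these lines rather than by acyclic models.
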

\noindent See Theorem \ref{thm7.3} below\vsm.

One of the main uses of the $k$-invariants is to provide an obstruction class
$\beta$ in \w{H\sp{\ast}(\mY,\phi\sp{\ast}\pi\sb{n}\mX)} for the existence
of liftings of the form
\mytdiag[\label{eqppost}]{
      &&     \Po{n}\mX \ar[d]  \\
\mY \ar@{.>}[urr] \ar[rr]\sp{\phi} &&   \Po{n-1}\mX
}
in simplicial categories (see \wref{eqpost} below).

Given a map of simplicial categories \w[,]{\phi:\partial\square\sp{n}\to\Po{n-1}\mX}
let $M$ be the mapping space in $\mX$ from the source to the target of the cube.
We show that by combining all the facets of the cube, we obtain a map
\w{\beta:\bS{n-2}\to M} (see Proposition \ref{pperm}) and show:

\begin{thmc}
The obstruction class for lifting $\phi$ to \w{\Po{n}\mX} is represented by
$\beta$.
\end{thmc}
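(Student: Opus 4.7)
The strategy is to combine standard obstruction theory for Postnikov towers with the explicit cochain complex from Theorem B and the geometric description of $\beta$. Throughout, one works with the mapping problem associated to a single cube $\mY = \partial\square^n$.

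First, I would invoke the general obstruction machinery for Postnikov towers of simplicial categories implicit in the Harpaz-Nuiten-Prasma framework already cited: the fiber of $\Po{n}\mX \to \Po{n-1}\mX$ is an Eilenberg--MacLane-type object with homotopy concentrated in degree $n$ and value $\pi_n\mX$, so the obstruction to extending $\phi$ across this map is a well-defined class in $H_Q^{n+1}(\partial\square^n, \phi^*\pi_n\mX)$. By Theorem B applied to $\mY = \partial\square^n$, this cohomology is the cohomology of the limit cochain complex indexed by cubes $\psi: \sq{k} \to \partial\square^n$, with summands $\pi_n(\Map_\mX(x,y), \phi)$ ranging over intervals $[x,y]$ in those cubes.

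Next, I would exploit the fact that each facet $F_i = \partial_i\square^n$ (for $i = 1,\dots,2n$) is itself a cube of dimension $n-1$. Because $\Po{n-1}\mX$ kills $\pi_n$ and higher, the map $\phi|_{F_i}$ admits a lift $\tilde\phi_i: F_i \to \Po{n}\mX$, and any two such lifts differ by an element measurable in $\pi_n$ of the appropriate mapping space. Choosing such lifts facet-by-facet, the obstruction class is represented by the discrepancy cocycle measuring the failure of these local lifts to agree on the codimension-two faces $F_i \cap F_j$, viewed as an element in the dominant summand $\pi_n(M, \phi_0)$ coming from the interval between the initial and terminal vertex.

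Third, I would identify this discrepancy cocycle with the class of $\beta$. The mapping space $\Map_{\square^n}(x_0, x_n)$ in the free $\infty$-category on the cube is the permutohedron, an $(n-1)$-ball; its boundary is the $(n-2)$-sphere obtained by gluing the mapping spaces of the facets. Under the parametrization from Proposition \ref{pperm}, the combined facet lifts determine a map out of this ball that restricts to $\beta: \bS{n-2} \to M$ on the boundary. Chasing through the identification provided by Theorem B — which sends a cocycle to its value on the top-dimensional cube $\phi$ — the discrepancy cocycle lands in $\pi_n(M, \phi_0) = \pi_{n-2}(\Omega^2 M) = [\bS{n-2}, M]_*$, and gives exactly the class $[\beta]$.

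The main obstacle is verifying that this last correspondence is indeed an equality rather than merely a coincidence up to sign or automorphism. This requires unwinding the combinatorial description of the André-Quillen differential in Theorem B against the boundary combinatorics of the cube $\square^n$, and matching the orientations of the $2n$ facets with the signs appearing in the permutohedral gluing of Proposition \ref{pperm}. Once that bookkeeping is done and the local facet lifts are shown to patch precisely as described in the construction of $\beta$, the theorem follows from the uniqueness of the obstruction class up to coboundary.
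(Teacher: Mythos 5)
Your strategy matches the paper's: invoke the general HNP obstruction class, use the decomposition of $\partial\square^n$ into its facets, and identify the class geometrically via the permutohedral mapping space from Proposition~\ref{pperm}. The paper runs the same argument more tersely, citing Corollary~\ref{cor7.1} for the class, Theorem~\ref{thm4.1} for the homotopy colimit decomposition of $\mX_f$ into images of the facets, and then the collaring construction of~\S\ref{skinvcube} for the facet-level representatives $\alpha_\rho$, whose assembly under the universal property of the colimit is precisely $\beta_f$.

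Two points deserve correction. First, the final chain of identifications $\pi_n(M,\phi_0)=\pi_{n-2}(\Omega^2 M)=[\bS{n-2},M]_*$ is off by two: $\pi_{n-2}(\Omega^2 M)\cong\pi_n(M)$, but $[\bS{n-2},M]_*=\pi_{n-2}(M)$, and it is the latter in which the facet class $\beta_f$ lives (Definition~\ref{dbcube}). You should not equate these; instead one must track how, in passing from the full $n$-cube (where $\Map_{\DK(\bone{n})}(J_\emptyset,J_{\mathbf n})\cong P_{n-1}$, an $(n-1)$-ball) to its boundary (where the corresponding mapping space degenerates to $\partial P_{n-1}\cong\bS{n-2}$), the top-interval coefficient drops accordingly. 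Second, the phrase ``discrepancy cocycle measuring the failure of these local lifts to agree on codimension-two faces'' is misleading: once one fixes the compatible representatives $s(\sigma)$ as in~\S\ref{skinvcube}, the facet lifts automatically agree on codimension-two cubes, and the obstruction is not a mismatch there but rather the assembled boundary class in the top mapping space $M$, obtained by gluing the $\alpha_\rho$ along $\partial P_{n-1}$. Your third paragraph in fact says this correctly; the ``discrepancy'' language in the second paragraph is the part that should go. Finally, you flag but do not carry out the sign/orientation bookkeeping; the paper defers well-definedness up to coboundary of the different choices of $s(\sigma)$ to \cite[Lemma A.1]{BDreC}, which is the appropriate reference to cite in place of redoing that work.
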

\noindent See Theorem \ref{tpbclass} below\vsm.

In principle, the successive $k$-invariants of a simplicial category $\mX$
should contain all higher order information about $\mX$ (beyond the primary
information contained in \w{\pi\sb{n}\mX} for each \w[).]{n\geq 0}
Section \ref{ckihs} shows how the information contained in Theorem C can be used
in a few simple examples to derive such higher homotopy invariants, including:
\begin{enumerate}
\item differentials in the spectral sequence of a simplicial object in $\mX$;
\item higher order homotopy operations;
\item the classification of certain diagrams in $\mX$.
\end{enumerate}

Our final section provides another example, by addressing the question of when
a loop space can be given a compatible \ww{\EE{2}}-structure, using the
description of algebras over an operad as models of certain \ww{\Fin}-\emph{theories}
(see \cite{HMeadS}).

\begin{notn}\label{snac}
Let \w{\Dp} denote the category of finite ordered sets and
order-preserving maps, with objects \w{\bn=(0<1<\dotsc<n)} for \w[,]{n\geq -1}
$\Delta$ the full subcategory with \w[,]{n\geq 0}
\w{\Dresp} the wide subcategory of \w{\Dp} with only monic maps, 
and \w{\Drn{n,r}} its full subcategory with objects \w{\bk} that satisfy 
\w[.]{r\leq k\leq n+1} 
A \emph{simplicial object} in a category $\C$ is a functor
\w[,]{\Dop\to\C} and the category of all such is denoted by \w[.]{\C\sp{\Dop}}

The category of simplicial categories (i.e., those enriched in
\w[)]{\Set\sp{\Dop}} will be denoted by \w[,]{\sCat} with objects denoted by
$\mX$, $\mY$, etc. The simplicial enrichment
in $\mX$ is written \w[,]{\Map\sb{\mX}(x,y)} and \emph{fibrant} simplicial categories
in Bergner's model structure (see \cite{Bergner1}) are those for which
\w{\Map\sb{\mX}(-,-)} takes values in Kan complexes.

A useful form of cofibrant replacement for \w{\mX\in\sCat} is given by the
Dwyer-Kan resolution \w{\DK(\mX)} (see \cite[\S 2]{DKanL}).
The simplicial category of simplicial sets, enriched by internal \w[,]{\ihom}
will be denoted by \w[.]{\sSetm\in\sCat}
The homotopy category of a model category $\M$ is written \w[.]{\Ho(\M)} Given two objects
\w[,]{x, y \in \M} we write \w{\Map\sp{h}\sb{\M}(x,y)} for the Dwyer-Kan mapping space
between them (see \cite{DKanL}).

\emph{Quasi-categories} are fibrant objects for the Joyal model structure
on \w[,]{\Set\sp{\Dop}} with Joyal equivalences as the weak equivalences
(see \cite[Theorem 2.2.0.1]{LurieHTT}). We will use $X$, $Y$, and so on, to denote
quasi-categories. For objects ($0$-simplices) \w[,]{x,y\in X} \w{C\sb{/x}} is the
slice quasi-category over $x$, \w{C\sb{x/}} the coslice quasi-category, and
\w{\Map\sb{X}(x,y)} the space of maps between them (using any of the equivalent
models from \cite[Section 1.2.2]{LurieHTT}). The \emph{functor quasi-category}
between two quasi-categories, given by the internal \w{\ihom} in simplicial sets,
is denoted by \w[.]{\Fun(X,Y)}  The quasi-category of spaces
(see \cite[Definition 1.2.6.1]{LurieHTT}) is denoted by $\cS$, and that
of quasi-categories (see \cite[Definition 3.0.0.1]{LurieHTT}) by \w[.]{\icat}

The adjoint functors
$$
\fC:\Set\sp{\Dop}\leftrightarrows\sCat:\fN~,
$$
\noindent with $\fN$ the \emph{homotopy coherent nerve}, and $\fC$
described explicitly in \cite[Example 1.1.5.9]{LurieHTT}, constitute a Quillen
equivalence between the model structures of Joyal and Bergner, by
\cite[Theorem 2.2.5.1]{LurieHTT}.
\end{notn}

\begin{mysubsection}{Organization}
\label{sorg}
Section \ref{csmc} reviews the stabilization of model categories and the theory
of spectral Andr\'{e}-Quillen cohomology from \cite{HNPrasA} and \cite{HNPrasT}.
Section \ref{ccsms} provides some background on cubical sets and their model structures.
In Sections \ref{chcdcs} and \ref{chcdcscc} we show how a quasi-category
decomposes as a homotopy colimit of $n$-cubes.
In Section \ref{csaqcc} we define chain complexes computing the
Andr\'{e}-Quillen cohomology of an $n$-cube, and thence that of a
simplicial category.
In Section \ref{ccccc} we describe the lifting obstructions for the boundary of a
cube in a simplicial category, and in Section \ref{ckihs} we show
how these $k$-invariants encode certain elements of higher structure.
Finally, Section \ref{cotrea} describes an obstruction theory for extending a given
\ww{\EE{1}}-algebra structure to an \ww{\EE{2}}-algebra.
\end{mysubsection}

\begin{ack}
We would like to thank the referee for his or her helpful comments.
The second author was supported by the Starting Grant 101077154
``Definable Algebraic Topology'' from the European Research Council awarded to
Martino Lupini.
\end{ack}

%
%
\sect{Spectral Andre-Quillen Cohomology}
\label{csmc}

We first review the stabilization of a model category $\M$ and the
cotangent complex of an object in $\M$ (from \cite{HNPrasT}), which
are needed to define spectral Andr\'{e}-Quillen cohomology
(see \cite{HNPrasA}). This has an advantage over Quillen's original definition
(which assumes the existence of a model structure on abelian group objects),
and behaves better with respect to Quillen equivalences.

An $\Omega$-\emph{spectrum} is a diagram \w{X : \NN \times \NN \to \M}
such that \w{X\sb{m, n} := X(m, n)} is weakly contractible for each $m \neq n$ and for
which each diagonal square:
$$
\xymatrix@R=10pt@C=20pt{
X\sb{n, n} \ar[d] \ar[r] & X\sb{n, n+1} \ar[d] \\
 \ar[r] X\sb{n+1, n} \ar[r] & X\sb{n+1, n+1}
}
$$
is a homotopy pullback.

Given a model category $\M$, the left Bousfield localization of the injective model
structure \w[,]{\M\sp{\NN \times \NN}} in which the fibrant objects are Reedy fibrant
$\Omega$-spectra, is denoted by \w{\Sp(\M)} (provided the localization exists).

\begin{thm}\label{thm1.1}
For $\M$ a left proper combinatorial pointed model category, \w{\Sp(\M)} exists.
\end{thm}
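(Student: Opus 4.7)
The plan is to reduce the statement to the existence theorem for left Bousfield localizations of left proper combinatorial model categories due to J.~Smith (see, e.g., Barwick \emph{On (enriched) left Bousfield localizations of model categories}, or Hirschhorn \emph{Model Categories and their Localizations}, Theorem 4.1.1). That theorem asserts that if $\N$ is a left proper combinatorial model category and $S$ is a set of maps in $\N$, then the left Bousfield localization $L_{S}\N$ exists. Consequently the proof breaks into three steps: put a suitable base model structure on $\M^{\NN\times\NN}$, exhibit a set $S$ of maps whose local objects are precisely the $\Omega$-spectra, and invoke Smith's theorem.

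For the first step, I would equip $\M^{\NN\times\NN}$ with the injective model structure, in which cofibrations and weak equivalences are defined pointwise. Since $\M$ is combinatorial, the injective model structure on diagrams indexed by any small category exists by Lurie \cite[Proposition A.2.8.2]{LurieHTT}; combinatoriality and left properness are transferred from $\M$ (left properness because pushouts, cofibrations, and weak equivalences are pointwise). Thus $\M^{\NN\times\NN}$ is left proper combinatorial, so Smith's theorem applies to any set of maps in it.

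For the second step, I would construct $S$ as the union of two families. Let $\{A_\alpha\to B_\alpha\}$ be a fixed set of generating cofibrations of $\M$ and let $F_{m,n}:\M\to\M^{\NN\times\NN}$ be the left adjoint to evaluation at $(m,n)$. The family enforcing weak contractibility of $X_{m,n}$ for $m\neq n$ consists of the maps $F_{m,n}(B_\alpha)\to F_{m,n}(0)\cong 0$ for every $\alpha$ and every $m\neq n$; a Reedy/injectively fibrant $X$ is local with respect to these precisely when each $X_{m,n}$ ($m\neq n$) is weakly contractible, because pointedness lets us identify $\Map^h(F_{m,n}(B_\alpha),X)$ with $\Map^h(B_\alpha, X_{m,n})$ and these spaces detect the weak equivalence $X_{m,n}\to 0$ as $\alpha$ varies. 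The family enforcing that each diagonal square is a homotopy pullback consists, for each $n$ and each generating cofibration $A_\alpha\to B_\alpha$, of the pushout–product maps built from the span $F_{n,n+1}\leftarrow F_{n+1,n+1}\to F_{n+1,n}$ applied to $A_\alpha\to B_\alpha$; a fibrant $X$ is local with respect to these precisely when the canonical map $X_{n,n}\to X_{n,n+1}\times^h_{X_{n+1,n+1}} X_{n+1,n}$ is a weak equivalence, by the usual pushout–product/mapping-space adjunction. The resulting $S$ is a set because $\M$ is combinatorial and $\NN\times\NN$ is countable.

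Applying Smith's theorem to $(\M^{\NN\times\NN},S)$ then produces $\Sp(\M)$ as claimed. The main technical obstacle is the verification in the second step: one must check that the pushout–product formulation of the homotopy pullback condition is genuinely a \emph{set} of local conditions whose $S$-local objects are \emph{exactly} the Reedy/injectively fibrant $\Omega$-spectra (no spurious equivalences are imposed, and every homotopy pullback square is detected). This is a standard but careful mapping-space adjunction argument; once it is in place, the rest is formal from combinatoriality and left properness of $\M$.
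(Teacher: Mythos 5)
The paper's proof of this statement is nothing more than a citation to \cite[Lemma 2.1.6, Corollary 2.1.7]{HNPrasT}, and your proposal reconstructs what is essentially the argument given there: equip $\M^{\NN\times\NN}$ with the injective model structure (which exists and is left proper combinatorial because $\M$ is), exhibit a set $S$ of maps whose local objects are the $\Omega$-spectra, and invoke Smith's existence theorem for left Bousfield localizations. So the overall route is the same as what the paper cites.

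One technical caveat in your first family of maps. Requiring $\Map^{h}(B_{\alpha}, X_{m,n})\simeq\ast$ for the codomains $B_{\alpha}$ of a chosen generating set of cofibrations does \emph{not} in general force $X_{m,n}$ to be weakly contractible: in an arbitrary combinatorial model category the codomains of generating cofibrations need not detect weak equivalences via mapping spaces (one can easily rig generating cofibrations with contractible codomains). The standard repair is to test against a small set of homotopy generators for $\M$, which exists by combinatoriality (e.g.\ via Dugger's presentation theorem), or to package the contractibility condition differently, as Harpaz--Nuiten--Prasma do. You flag that the second family needs a careful verification; the same flag applies to the first. A smaller wrinkle: you write ``Reedy/injectively fibrant'' as though they were interchangeable, but for the direct Reedy category $\NN\times\NN$ the Reedy model structure is the \emph{projective} one, and injective fibrancy is strictly stronger; the paper's own wording (``injective model structure \dots\ Reedy fibrant $\Omega$-spectra'') invites this conflation, and both model structures are left proper combinatorial so existence goes through either way, but the two notions of fibrancy should not be identified.
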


\begin{proof}
See \cite[Lemma 2.1.6. Corollary 2.1.7]{HNPrasT}.
\end{proof}

The diagonals determine equivalences \w[,]{X\sb{n, n} \simeq \Omega X\sb{n+1, n+1}}
so we call \w{\Sp(\M)} the \emph{category of spectrum objects in $\M$}, or simply
the \emph{stabilization} of $\M$. The suspension functor
\w{\Sigma\sp{n} :\Sp(\M) \to\Sp(\M)} (for \w[)]{n\geq 0}
is given by \w[,]{\Sigma\sp{n}(X\sb{\bullet, \bullet})\sb{k, l} := X\sb{k+n, l+n}}
and we have a Quillen pair
$$
\Sigma\sp{\infty} : \M \leftrightarrows \Sp(\M) :  \Omega\sp{\infty}~,
$$
\noindent where \w{\Sigma\sp{\infty}(X)} is the constant diagram on $X$,
and \w[.]{\Omega\sp{\infty}(X\sb{\bullet, \bullet})=X\sb{0, 0}}

For an unpointed  model category  $\M$, we have composite adjunctions
$$
\Sigma\sp{\infty}\sb{+} : \M \leftrightarrows \M\sb{\ast}
\leftrightarrows\Sp(\M\sb{\ast})~:  \Omega\sp{\infty}\sb{+}
$$
where \w{\M\sb{\ast} := \M\sb{\ast/}} is the `pointification' of $\M$
(the slice model category over the terminal object). Given \w[,]{A \in \M}
we write \w{\M\sb{A//A}} for \w[.]{(M\sb{/A})\sb{\Id\sb{A}/}}

For each \w[,]{n\in\NN} the category \w{\Sp(\M)} is equipped with
\emph{shift functors}
$$
(-)[-n]\colon\Sp(\M) \leftrightarrows\Sp(\M)\colon(-)[n]~,
$$
\noindent given by \w{X[n]\sb{\bullet, \bullet}:=X\sb{\bullet+n, \bullet+n}} and
\w[.]{X[-n]\sb{\bullet,\bullet}:=X\sb{\bullet-n,\bullet-n}}
These shift functors forms a Quillen pair, since \w{[-n]} preserves cofibrations
and levelwise weak equivalences, and \w{[n]} preserves $\Omega$-spectra.

We have a similar notion of \emph{stabilization} \w{\Sp(X)} of a quasi-category $X$
(see \cite[Proposition 1.4.2.22]{LurieHA}). One can identify the
stabilization of a model category with that of the associated quasi-category
(see \cite[Proposition 3.3.2]{HNPrasT}).

\begin{defn}\label{def1.2}
If $A$ is an object in a model category $\M$, then \w[,]{\mT\sb{A}=\mT\sb{A}(\M)}
the \emph{tangent model category} at $A$, is defined to be \w[.]{\Sp(\M\sb{A//A})}

The \emph{cotangent complex} \w{\cL\sb{A}} of $\M$ at $A$ is the image of
\w{A \xra{\id} A} under the left derived functor
\w[.]{\LL\Sigma\sp{\infty}\sb{+} : \M\sb{/A} \to\Sp(\M\sb{A//A})}
The \emph{relative cotangent complex} \w{\cL\sb{A/B}} of \w{f:B\to A}
is the homotopy cofiber of \w{\LL\Sigma\sp{\infty}\sb{+}(f):\cL\sb{B}\to \cL\sb{A}}
in \w[.]{\Sp(\M\sb{A//A})}
\end{defn}

Now for each \w{f : A \to B} in $\M$, we have an adjunction
\begin{myeq}[\label{con1.4}]
f\sb{!}~\colon~\M\sb{A//A}~\leftrightarrows~\M\sb{B//B}~\colon~f\sp{\ast}~,
\end{myeq}
\noindent with \w{f\sb{!}(A\to X \to A)} given by
\w[,]{B=A\coprod\sb{A}B\to X\coprod\sb{A} B \to A \coprod\sb{A} B = B }
and \w{f\sp{\ast}(A \to Y \to A)} given by
\w[.]{A=A\times\sb{B} B\to X\times\sb{B}A\to A\times\sb{B} B=A}
This induces an adjunction
\begin{myeq}[\label{con1.3}]
f\sb{!}~:~\mT\sb{A}~\leftrightarrows~\mT\sb{B}~:~f\sp{\ast}~.
\end{myeq}

\begin{remark}\label{rmk1.4}
By \cite[Remark 3.3.5]{HNPrasT}, we can identify the stabilization of $\M\sb{A//A}$
with the model category whose objects are \emph{reduced excisive functors}
\w[,]{\Fin\sb{\ast} \to \M\sb{A//A}} where  \w{\Fin\sb{\ast}} is the
category of finite pointed simplicial sets. This has the structure of a very special
$\Gamma$-space in the sense of Segal, and thus an object of \w{\Sp(\M\sb{A//A})} gives
rise to an \ww{\EE{\infty}}-monoid structure on $A$.
\end{remark}

\begin{defn}\label{def1.5}
Suppose that $\M$ is a model category whose underlying category $M$ has a $0$-object $0_M$.
A \emph{weak $0$-object} in $\M$, is a object which is weakly equivalent to $0\sb{M}$.
A \emph{weak $0$-section} is a map \w{f:0\to M[n]} in \w{\Sp(\M\sb{A//A})}
whose domain is a weak $0$-object. We call the induced map \w{\Omega\sp{\infty}\sb{+}(f)}
a \emph{weak $0$-section} of \w{\Omega\sp{\infty}\sb{+}(M[n])} in $\M$.
If \w{\Sp(\M)} exists and $X$ is a fibrant object of $\M$, then a \emph{small extension}
of $X$ is a homotopy pullback diagram:
$$
\xymatrix@R=12pt@C=20pt{
X\sb{\alpha} \ar[r] \ar[d] & \Omega\sp{\infty}\sb{+}(0) \ar[d] \\
X \ar[r] & \Omega\sp{\infty}\sb{+}(M[1])~.
}
$$
where the right vertical map is \w{\Omega\sp{\infty}\sb{+}(f)} for some weak
$0$-section $f$.
\end{defn}

\begin{example}\label{exam1.6}
In the Kan model structure on \w[,]{\Set\sp{\Dop}} \w{(\Set\sp{\Dop})\sb{/X}}
can be identified with the quasi-category \w{\cS\sb{/X}} and
the stabilization can be identified with the functor quasi-category
\w[.]{\Fun(X, \Sp(\cS))} That is, small extensions are parametrized spectra.
In the case of a constant functor, the \w{\EE{\infty}} group structure splits as
\w[.]{\Omega\sp{\infty}\sb{+}(M) = \Omega\sp{\infty}\sb{+}M\sb{0,0}\times X}

Thus, we can think of a small extension in this situation as a principal
$\infty$-bundle or torsor (see \cite[Definition 2.16 and Proposition 3.8]{NSSteP}).
See \S \ref{skinv} below.
 \end{example}

\begin{defn}\label{def2.1}
For \w[,]{M \in\Sp(\M\sb{A//A})} the $n$th \emph{spectral Andr\'{e}-Quillen cohomology
group} for \w{A\in\M} \emph{with coefficients in $M$} is defined to be
$$
H\sp{n}\sb{Q}(A, M) := \pi\sb{0}\Map\sp{h}(\cL\sb{A},\Sigma\sp{n}M)~
$$
\noindent (see \S \ref{snac}). The \emph{relative spectral Andr\'{e}-Quillen
cohomology group} for \w{f:B\to A} is
$$
H\sp{n}\sb{Q}(A,B; M) := \pi\sb{0}\Map\sp{h}(\cL\sb{A/B}, \Sigma\sp{n}M)~.
$$
\end{defn}

\begin{mysubsection}{Classical Andr\'{e}-Quillen cohomology}
\label{scaqc}
 The category \w{\Ab(\M\sb{/A})} of abelian
group objects in \w[,]{\M\sb{/A}} for fixed \w[,]{A\in\M} has a model structure induced
by the free-forgetful adjunction \w[.]{\F:\M\sb{/A}\leftrightarrows\Ab(\M\sb{/A}):\U}
In this situation we define the $n$-th \emph{Quillen cohomology group} of $A$ with
coefficients in \w{M \in\Ab(\M\sb{/A})} to be
$$
H\sp{n}\sb{\Cl, Q}(A, M)~:=~\pi\sb{0}\Map\sp{h}(\LL\F(A), \Sigma\sp{n}(M))~.
$$

By \cite[Proposition 2.3]{HNPrasA}, we have isomorphisms
\w[,]{H\sp{n}\sb{\Cl, Q}(X, M)\cong H\sp{n}\sb{Q}(X, H(M))}
where $H$ is the composite
\w[.]{\Ab(\M\sb{/A}) \xra{\Sigma\sp{\infty}}\Sp(\Ab(\M\sb{/A}))
  \xra{\Sp(\U)}\Sp(\M\sb{A//A})}
\end{mysubsection}

\begin{remark}\label{exam2.2}
In order to facilitate the comparison with \cite{HNPrasA}, the indexing of the
cohomology groups we use here is not that of Quillen in \cite[II, \S 5]{QuiH} or
Dwyer, Kan, and Smith in \cite{DKSmitO}. In particular, it does not match that of
the classical $k$-invariants \w{k\sb{n}\in H\sp{n+2}(\Po{n}X;\pi\sb{n+1}(X,x\sb{0}))}
for a pointed space \w[.]{(X,x\sb{0})}
\end{remark}

\begin{prop}\label{thm2.4}
Let $\M$ be a combinatorial left proper model category and \w{M \in \M} a
fibrant $\Omega$-spectrum object. If
$$
\xymatrix@R=12pt@C=20pt{
X \ar[d] \ar[r] & \Omega\sp{\infty}(0)\ar[d] \\
Y \ar[r]\sb{g} & \Omega\sp{\infty}(M[1])
}
$$
is a small extension, there is a natural class
\w{[\beta] \in H\sp{1}\sb{Q}(A,B;g\sp{\ast}M[1])} serving as the obstruction to the
existence of the lift in
$$
\xymatrix@R=14pt@C=20pt{
B \ar[d]\sb{f} \ar[r] & X \ar[d] \ar[r] & \Omega\sp{\infty}(0) \ar[d] \\
A \ar[r]\sb{g} \ar@{.>}[ur] & Y \ar[r]_<<<<{\alpha} & \Omega\sp{\infty}(M[1])~.
}
$$
\end{prop}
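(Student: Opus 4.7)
The plan is to reduce the lifting problem to a null-homotopy extension problem in the stable model category $\mT_A$, and then realize the obstruction class $[\beta]$ via the universal property of the cofiber defining $\cL_{A/B}$.

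First, since the right-hand square in the statement is a homotopy pullback and $\Omega^\infty(0)$ is weakly contractible (as $\Omega^\infty$ of a weak $0$-object), a lift $A \dashrightarrow X$ of $g: A \to Y$ amounts to a null-homotopy of the composite $\alpha \circ g: A \to \Omega^\infty(M[1])$. The hypothesis that $B \to X$ is given supplies a specific null-homotopy $h_B$ of the further restricted composite $\alpha \circ g \circ f: B \to \Omega^\infty(M[1])$, and extending $B \to X$ to a lift of $A$ corresponds exactly to extending $h_B$ to a null-homotopy of $\alpha \circ g$.

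Second, applying the $\Sigma^\infty_+ \dashv \Omega^\infty_+$ adjunction and the base-change adjunction \wref{con1.3}, I would translate these data into morphisms in the stable model category $\mT_A = \Sp(\M_{A//A})$: the map $\alpha \circ g$ corresponds to a morphism $\tilde{\gamma}: \cL_A \to g^*M[1]$, while $h_B$ corresponds to a specific null-homotopy of the induced composite $f_!\cL_B \to \cL_A \xrightarrow{\tilde{\gamma}} g^*M[1]$. Since $\cL_{A/B}$ is defined in Definition \ref{def1.2} as the homotopy cofiber of $f_!\cL_B \to \cL_A$ in $\mT_A$, the pair $(\tilde{\gamma}, \tilde{h}_B)$ factors $\tilde{\gamma}$ through this cofiber, yielding a map $\beta: \cL_{A/B} \to g^*M[1]$ and hence a class $[\beta] \in H^1_Q(A, B; g^*M[1])$ in the sense of Definition \ref{def2.1}.

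Finally, I would verify that $[\beta]$ is the correct obstruction: because $\mT_A$ is stable, the cofiber sequence $f_!\cL_B \to \cL_A \to \cL_{A/B}$ yields a fiber sequence of mapping spaces into $g^*M[1]$, so $[\beta]=0$ precisely when $\tilde{\gamma}$ admits a null-homotopy compatible with $\tilde{h}_B$. Reversing each adjunction shows this is equivalent to $\alpha \circ g$ admitting a null-homotopy extending $h_B$, hence to the existence of the desired dotted lift. Naturality of $[\beta]$ is inherited from the functoriality of the cotangent complex, of homotopy cofibers, and of the Quillen adjunctions used. The main technical obstacle is the coherent bookkeeping across the successive adjunctions between $\M$, $\M_{A//A}$, $\mT_A$, and the base-change to $\mT_B$; in particular, checking that the given null-homotopy $h_B$ on the level of $\M$ produces a genuine null-homotopy of the spectrum-level composite $f_!\cL_B \to g^*M[1]$, and not merely pointwise data, requires careful analysis in the stabilized model structure.
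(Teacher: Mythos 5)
Your argument follows the same route as the paper's (explicitly labelled) sketch of proof, which defers the details to \cite[Section 2.6]{HNPrasA}: translate the lifting problem into a null-homotopy problem for $\alpha\circ g$, pass through the $\Sigma^\infty_+\dashv\Omega^\infty_+$ and base-change adjunctions, and use the homotopy-cofiber description of $\cL_{A/B}$ from Definition \ref{def1.2} to obtain the class $\beta$ whose vanishing is equivalent to the lift. If anything, you are slightly more careful than the paper's wording in insisting that the lift must be compatible with the given map $B\to X$, so what is really needed is a null-homotopy of $\alpha\circ g$ extending the one supplied on $B$ — precisely the relative datum that makes the \emph{relative} cotangent complex, rather than $\cL_A$ alone, the correct home for the obstruction.
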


\begin{proof}[Sketch of Proof]
This follows from \cite[Section 2.6]{HNPrasA}.
The existence of the lift is equivalent to the map \w{ \alpha \circ g} being
nullhomotopic. By adjunction, this determines a map $\beta$ from the homotopy cofiber
of \w[,]{\cL\sb{B}\to\cL\sb{A}} which can be identified with the relative
cotangent complex.
\end{proof}

\begin{thm}\label{thmxxx}
If $\mX$ is a fibrant simplicial category, \w{\mT\sb{\mX}(\sCat)} can be
identified with the stabilization of
\w[,]{\Fun(\mX \times \mX\sp{\op}, \sSet)\sb{\Map\sb{\mX}//\Map\sb{\mX}}}
with the model structure of \ww{\sSet}-enriched functors between \w{\mX\times\mX\sp{\op}}
and \w[.]{\sSet} Under this identification, the adjunction of \wref{con1.4} induced
by a map \w{\mX \to \mY} can be identified with the stabilization of the
restriction/left Kan extension adjunction:
$$
f\sb{!} :  \Fun(\mX \times \mX\sp{\op}, \sSet)\sb{\Map\sb{\mX}//
 \Map\sb{\mX}} \leftrightarrows
\Fun(\mY \times \mY\sp{\op}, \sSet)\sb{\Map\sb{\mY}//\Map\sb{\mY}} : f\sp{\ast}~.
$$
\end{thm}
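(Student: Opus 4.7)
The plan is to apply the general identification, going back to Quillen and developed in the operadic setting in \cite{HNPrasT}, of the tangent model category of a category of algebras at a fixed algebra $A$ with a category of bimodules over $A$. Simplicial categories with a fixed set of objects $S$ are precisely algebras over a colored operad with color set $S \times S$, whose algebras in $\Fun(S\times S, \sSet)$ encode categorical composition and identities; the corresponding bimodules at $\mX$ are $\sSet$-valued functors on $\mX\times\mX\op$ equipped with factorizations of $\Map_{\mX}\to\Map_{\mX}$.

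The first step is to reduce to a fixed object set. Since $\mX$ is fibrant, every object of $\sCat_{\mX//\mX}$ admits, after standard cofibrant-fibrant replacement, a weakly equivalent model with underlying object set $\Obj{\mX}$ for which both structure maps to $\mX$ are the identity on objects. This yields a Quillen equivalence between $\sCat_{\mX//\mX}$ and the subcategory of simplicial categories on object set $\Obj{\mX}$ factoring the identity on $\mX$, hence on stabilizations as well.

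The second step is to compare this reduced category with $\Fun(\mX\times\mX\op,\sSet)_{\Map_{\mX}//\Map_{\mX}}$ via the underlying-bifunctor functor $\mZ \mapsto \Map_{\mZ}(-,-)$, endowed with its induced maps to and from $\Map_{\mX}$. This forgetful functor admits a left adjoint (\emph{free simplicial category on a bimodule}) and is a right Quillen functor for the appropriate model structures. Applying the general tangent-category identification of \cite{HNPrasT} for operadic algebras, or equivalently observing that the composition structure on mapping spaces becomes homotopically trivial in the abelian/stable regime, this adjunction becomes a Quillen equivalence on stabilizations. Finally, to establish naturality in $f\colon\mX\to\mY$, note that $f$ induces a change of colors $f\times f\op$ on the underlying color set, and the adjunction $(f_!,f^*)$ of \wref{con1.4} is by definition the change-of-algebras adjunction along this operad morphism. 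Under the identification with bimodules, change of colors is precisely the restriction/left Kan extension adjunction along $f\times f\op$, and this matching descends to stabilizations.

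The main obstacle is the tangent-category identification in the second step: one must verify that the enriched-functor model structure on $\Fun(\mX\times\mX\op,\sSet)_{\Map_{\mX}//\Map_{\mX}}$ coincides with the one produced by the operadic framework of \cite{HNPrasT}, and that the composition structure contributes trivially after stabilization. This is essentially a Basterra-Mandell style argument for the colored operad encoding simplicial category structure, and should follow either by direct verification in the Bergner setting or by invoking the general algebra-over-operad results of \cite{HNPrasT}. A secondary bookkeeping point is that the reduction to fixed object set in the first step uses cofibrant replacement delicately, since Bergner fibrations need not be the identity on objects; this is handled by working with Dwyer-Kan resolutions on the source of each structure map.
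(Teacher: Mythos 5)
The paper's own proof of this theorem is a two-line citation: the identification is stated to follow from Theorem~3.1.14 of \cite{HNPrasA} (together with the fact that the monoidal structure on simplicial sets is cartesian), and the naturality statement from Equation~3.2.6 there. Your proposal instead sketches a reconstruction of that argument from the colored-operad framework, and your high-level outline --- reduce to a fixed object set, pass to bimodules via a forget/free adjunction, and match $(f_!, f^*)$ with change of colors --- does reflect the structure of the argument in \cite{HNPrasA}.

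The difficulty is that the two gaps you flag yourself are not bookkeeping but the mathematical content. The ``main obstacle'' you name --- that the composition structure on $\Map_{\mX}$ becomes homotopically invisible after stabilization, so that the forget/free adjunction to $\Fun(\mX \times \mX^{\op}, \sSet)_{\Map_{\mX}//\Map_{\mX}}$ becomes a Quillen equivalence on tangent categories --- is precisely the statement of Theorem~3.1.14 of \cite{HNPrasA}, and it is established there by a nontrivial square-zero/filtration argument, not by invoking an off-the-shelf Basterra--Mandell result. Your first step also glosses over a genuine issue: Bergner cofibrant replacements generally enlarge the object set, so producing a weakly equivalent object of $\sCat_{\mX//\mX}$ with object set exactly $\Obj{\mX}$ and identity structure maps on objects requires a separate argument (which \cite{HNPrasA} devotes a proposition to). Finally, you attribute the operadic tangent-category identification to \cite{HNPrasT}, but that paper only supplies the general stabilization formalism; the enriched-category/bimodule comparison is the content of \cite{HNPrasA}. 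In short, what you have is an accurate outline of the proof of the cited theorem, not a self-contained proof of the statement --- the paper's choice to simply cite \cite{HNPrasA} is the honest encoding of exactly the work your sketch leaves open.
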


\begin{proof}
The first statement follows from \cite[Theorem 3.1.14]{HNPrasA}, and the fact that
the monoidal structure for simplicial sets is cartesian. The second follows from
\cite[Equation 3.2.6]{HNPrasA}.
\end{proof}

\begin{defn}\label{def2.5}
Suppose that $C$ is a category. The \emph{twisted arrow category} of $C$, denoted $\Tw(C)$, is the category whose objects are arrow of $C$ and whose morphisms from $h$ to $i$ are commutative diagrams:
$$
\xymatrix
{
x\ar[d]_h  & \ar[l]\sb{f} \ar[d]\sb{i} z \\
y \ar[r]_g & w
}
$$
in $C$.

Denote by $\epsilon$ the functor \w{\Dop\to\Dop}
$$
[n] \mapsto [n]\ast[n]\op \cong [2n+1]
$$
from \cite[Definition 5.2.1.1]{LurieHA}. Given a simplicial set $X$, we denote by
\w{\Tw(X)} the simplicial set \w[,]{\epsilon\sp{\ast}(X)} so that
\w[.]{\epsilon\sp{\ast}(X)\sb{n} = X\sb{2n+1}}
and call this the \emph{twisted arrow category} of $X$
 By \cite[Remark 5.6.4]{CisiH}, we have an identity $N\Tw(C) = \Tw(N(C))$, justifying the coincidence of terminology.
\end{defn}

\begin{thm}\label{thm2.6}
If $\mX$ is a fibrant simplicial category, the underlying $\infty$-category of
\w{\mT\sb{\mX}} can be identified with the $\infty$-category of functors
\w{\Tw(\fN(\mX)) \to\Sp(\cS)} into the $\infty$-category of spectra.
The cotangent complex of $\mX$ can be identified with the constant functor
on the desuspension of the sphere spectrum.
Under these identifications, the adjunction of \ref{con1.3} can be identified with
the restriction/left Kan extension adjunction
$$
f\sb{!} :  \Fun(\Tw(\fN(\mX)), \Sp) \leftrightarrows
\Fun(\Tw(\fN(\mY)), \Sp) : f\sp{\ast}~.
$$
\end{thm}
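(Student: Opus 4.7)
The plan is to bootstrap from Theorem~\ref{thmxxx} and transfer its identification from simplicial to $\infty$-categorical language, then recognise the resulting $\infty$-category of functors as being indexed by the twisted arrow category. First I would invoke Theorem~\ref{thmxxx} to write $\mT\sb{\mX}(\sCat)\simeq\Sp(\Fun(\mX\times\mX\op,\sSet)\sb{\Map\sb{\mX}//\Map\sb{\mX}})$, and then pass to the underlying $\infty$-categories. Since $(\fC,\fN)$ is a Quillen equivalence compatible with the simplicial enrichment, $\Fun(\mX\times\mX\op,\sSet)$ presents $\Fun(\fN(\mX)\times\fN(\mX)\op,\cS)$, and the over/under slicing over $\Map\sb{\mX}$ corresponds to pointed functors equipped with a retraction onto $\Map\sb{\mX}$. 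Because stabilisation of a diagram category is computed pointwise, it commutes with $\Fun(K,-)$, so
\[
\mT\sb{\mX}\simeq\Sp\bigl(\Fun(\fN(\mX)\times\fN(\mX)\op,\cS)\sb{\Map\sb{\mX}//\Map\sb{\mX}}\bigr).
\]

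The main step, and the one I expect to be the principal obstacle, is to identify this pointed functor $\infty$-category with $\Fun(\Tw(\fN(\mX)),\cS\sb{\ast})$. The intuition is that a pointed object of $\Fun(\fN(\mX)\times\fN(\mX)\op,\cS)$ lying over and under $\Map\sb{\mX}$ is, at each pair $(x,y)$, a pointed space $F(x,y)$ whose basepoint varies naturally with each morphism $f\in\Map\sb{\mX}(x,y)$, and this is by definition a functor on $\Tw(\fN(\mX))$ valued in $\cS\sb{\ast}$. I would make this precise using Lurie's straightening/unstraightening for the left fibration $\fN(\Tw(\mX))\to\fN(\mX)\times\fN(\mX)\op$, whose discrete fibres are the mapping spaces of $\mX$; the retraction condition says exactly that the space $F(x,y)$ receives a distinguished point from every $f\in\Map\sb{\mX}(x,y)$. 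Stabilising pointwise then yields $\Fun(\Tw(\fN(\mX)),\Sp)$.

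Finally, to identify the cotangent complex I would observe that under this chain of equivalences $\id\sb{\mX}\in(\sCat)\sb{/\mX}$ maps to the terminal object of the slice, which under the twisted-arrow identification is the constant functor at a point; applying $\Sigma\sp{\infty}\sb{+}$ pointwise produces the constant functor at $\bSS$, and the shift convention of \S\ref{csmc} forces the canonical generator $\cL\sb{\mX}$ to be the constant functor on the desuspension $\bSS[-1]$, as claimed. The identification of the adjunction $(f\sb{!},f\sp{\ast})$ with restriction/left Kan extension along $\Tw(\fN(f))$ then follows from naturality: the first-step adjunction of Theorem~\ref{thmxxx} is transported to the stated adjunction via the twisted-arrow equivalence, which is manifestly natural in $\mX$ because straightening is natural in the base.
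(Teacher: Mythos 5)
The paper's own proof is a direct citation to \cite[Corollary 3.3.1]{HNPrasA}, so you are attempting to reconstruct a nontrivial result of Harpaz--Nuiten--Prasma. Your sketch of the first and third assertions \wh passing from Theorem~\ref{thmxxx} to the $\infty$-categorical setting, using (un)straightening along the left fibration \w{\Tw(\fN(\mX))\to\fN(\mX)\times\fN(\mX)\op} to convert retractive objects over \w{\Map\sb{\mX}} into pointed-space-valued functors on \w[,]{\Tw(\fN(\mX))} and observing that stabilisation is computed pointwise in a diagram category \wh is essentially the right strategy and roughly the one followed in \cite{HNPrasA}. Two wordings need fixing: the fibres of \w{\Tw(\fN(\mX))\to\fN(\mX)\times\fN(\mX)\op} are the mapping spaces, not discrete; and a retractive object over \w{\Map\sb{\mX}(x,y)} does not give a single pointed space \w{F(x,y)} with ``varying basepoint'' but rather, after straightening along the Kan fibration to \w[,]{\Map\sb{\mX}(x,y)} a family of pointed spaces indexed by \w[.]{\Tw}

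The genuine gap is in the cotangent complex computation. Tracing \w{\id\sb{\mX}} through the equivalences as you describe does give the constant functor at \w{\bSS=\Sigma\sp{\infty}S\sp{0}} \wh and there is no ``shift convention'' in \S\ref{csmc} that produces the required extra \w[.]{[-1]} The desuspension is a real mathematical phenomenon: the equivalence \w{\mT\sb{\mX}(\sCat)\simeq\Sp\bigl(\Fun(\mX\times\mX\op,\sSet)\sb{\Map\sb{\mX}//\Map\sb{\mX}}\bigr)} of Theorem~\ref{thmxxx} is an equivalence of tangent model categories, but the functor \w{\sCat\sb{/\mX}\to\Fun(\mX\times\mX\op,\sSet)\sb{/\Map\sb{\mX}}} that you would use to trace \w{\id\sb{\mX}} does not intertwine the two cotangent-complex functors; a transitivity cofibre sequence arising from the free-forgetful adjunction between bimodules and enriched categories produces the loop. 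Proving that the cotangent complex is \w{\bSS[-1]} rather than \w{\bSS} is precisely the nonformal content of \cite[Corollary 3.3.1]{HNPrasA}, and your proposal does not engage with it. You would need to reproduce the relevant cofibre sequence (or appeal explicitly to the HNP computation) rather than asserting a convention.
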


\begin{proof}
This is \cite[Corollary 3.3.1]{HNPrasA} combined with the last paragraph of
\cite[pg. 792]{HNPrasA}
\end{proof}

\begin{lemma}\label{lemmapping}
If the quasi-category $X$ is the homotopy limit of a diagram of quasi-categories
\w[,]{\Phi: I \to \sSet} with
\w{i\sb{j} : X \to \Phi(j)} the structure map for \w[,]{j \in I}
then for each $x$, $y$, and $z$ in $X$:
$$
\Map\sb{X}(x, y) = \underset{j \in I}{\holim} \Map\sb{\Phi(j)}(i\sb{j}(x), i\sb{j}(y))
$$
(in the Kan model structure).
\end{lemma}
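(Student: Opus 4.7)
The plan is to express $\Map_X(x,y)$ as a homotopy fiber of a cotensor and then interchange homotopy limits. For any quasi-category $Z$ with $0$-simplices $a,b$, the mapping space $\Map_Z(a,b)$ is equivalent to the homotopy fiber over $(a,b)$ of the Joyal fibration $Z^{\Delta^1}\to Z^{\partial\Delta^1}=Z\times Z$; this is one of the equivalent models mentioned in \cite[Section 1.2.2]{LurieHTT}, and since $Z^{\Delta^1}$ is a quasi-category when $Z$ is, the homotopy fiber is automatically a Kan complex.

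Next I would verify that the cotensor functor $(-)^{\Delta^1}$ is right Quillen on the Joyal model structure: its left adjoint $(-)\times\Delta^1$ preserves Joyal cofibrations and weak equivalences, since the Joyal model structure is Cartesian closed. Hence $(-)^{\Delta^1}$ preserves homotopy limits of quasi-categories, and the same obviously holds for $Z\mapsto Z\times Z$. Applying this to $X=\holim_{j}\Phi(j)$ produces a compatible cospan of homotopy limit diagrams, so that the Joyal fibration $X^{\Delta^1}\to X\times X$ is the homotopy limit of the system $\{\Phi(j)^{\Delta^1}\to\Phi(j)\times\Phi(j)\}_{j\in I}$, with the basepoint $(x,y)\in X\times X$ mapping compatibly to $(i_j(x),i_j(y))\in\Phi(j)\times\Phi(j)$ for every $j$.

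The final step is to take homotopy fibers over these compatible basepoints. Since taking a homotopy fiber is itself a homotopy pullback, and homotopy limits commute with homotopy limits, the interchange yields
$$\Map_X(x,y)\;\simeq\;\holim_{j\in I}\Map_{\Phi(j)}(i_j(x),i_j(y)).$$
All Kan complexes appearing agree in either the Joyal or the Kan model structure, since the two model structures coincide on Kan complexes.

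The main point requiring care is arranging the basepoints into an honestly compatible family across the diagram $\Phi$ (so that the pullback really is computed over a well-defined cone) and confirming that the pullbacks involved are indeed homotopy pullbacks. Both issues dissolve once one works with the fibrant model $Z^{\Delta^1}\to Z\times Z$, which is a Joyal fibration between quasi-categories, so that the ordinary pullback over any $0$-simplex already computes the correct homotopy fiber.
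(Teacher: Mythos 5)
Your proof is correct and follows essentially the same strategy as the paper's: identify the mapping space as a homotopy pullback (you use the arrow-object model $Z^{\Delta^1}\to Z\times Z$, the paper cites [HTT, A.2.4.4] for the same fact in more abstract form), observe that homotopy pullbacks commute with homotopy limits, and then pass from the Joyal to the Kan setting. The one place where you gloss over a real point is the last sentence: it is not quite enough to say that the two model structures ``coincide on Kan complexes''---the fibrations differ, so one must actually argue that a Joyal homotopy limit of a diagram of Kan complexes computes the Kan homotopy limit. The paper handles this by citing Joyal's comparison results (Theorem 6.22 and Proposition 6.26 of the Joyal notes), which is the cleanest way to close that gap; your reasoning in terms of the Cartesian-closed right Quillen functor $(-)^{\Delta^1}$ is otherwise a perfectly good and slightly more explicit version of the same computation.
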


\begin{proof}
The  model \w{\Hom\sb{X}(x, y)} of the mapping space in a quasi-category
from \cite[Section 1.2.2]{LurieHTT}  is a homotopy pullback in the Joyal model
structure by \cite[Proposition A.2.4.4]{LurieHTT}. Thus, it commutes with (Joyal)
homotopy limits. Homotopy limits of Kan complexes in the Joyal model structure are
Kan homotopy limits as well by \cite[Theorem 6.22 and Proposition 6.26]{JoyQA},
so the statement follows.
\end{proof}

The following result allows one to decompose the spectral Andr\'{e}-Quillen cohomology
of a simplicially enriched category in terms of simpler categories:

\begin{thm}\label{thm2.7}
Consider a lifting problem:
$$
\xymatrix@R=12pt@C=25pt{
\emptyset \ar[d] \ar[r] & \mX \ar[d] \ar[r] & \Omega\sp{\infty}(0) \ar[d] \\
\mB \ar[r]\sb{g} \ar@{.>}[ur] & \mY \ar[r]_<<<<{\alpha} & \Omega\sp{\infty}(M[1])~,
}
$$
\noindent where $\mB$, $\mX$ and $\mY$ are fibrant simplicial categories, with
\w[.]{B = \fN(\mB)} Let $\F$ be the object of \w{\Fun(\Tw(B),\Sp)}
corresponding to \w{g\sp{\ast}M[1]} under Theorem \ref{thm2.6}.  Suppose $I$ is
a filtered category, and $\mB$ is the homotopy colimit of \w{\Phi\colon I\to\sCat}
given by \w[.]{\{\phi\sb{i}\colon\Phi(i)\to\mB\}\sb{i \in I}}
and let \w[.]{\Psi = \fN(\Phi)}
Then
\begin{myeq}[\label{eqlimit}]
\Map\sb{\Fun(\Tw(B),\Sp)}(\cL\sb{B},\F)~=~
\underset{i \in I}{\holim}\,
\Map\sb{\Fun(\Tw(\Psi(i)),\Sp)}(\cL\sb{\Psi(i)},\ \fN(\phi\sb{i})\sp{\ast}\F)~.
\end{myeq}
\noindent Under this equivalence, the obstruction class
\w{\beta \in \Map\sb{\Fun(B,\Sp)}(\cL\sb{B},\F)} of Proposition
\ref{thm2.4} can be identified with the homotopy limit of the obstruction classes
$$
\beta\sb{\Psi(i)} \in
  \Map\sb{\Fun(\Tw(\Psi(i)),\Sp)}(\cL\sb{\Psi(i)}, \fN(\phi\sb{i})\sp{\ast}\F)~.
$$
\end{thm}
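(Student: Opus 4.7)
The plan is to first invoke Theorem \ref{thm2.6} to convert the problem into one about functors on twisted arrow categories valued in spectra, where mapping spaces are controlled by the restriction/left Kan extension adjunction. The key homotopical input is that the cotangent complex functor $\LL\Sigma^{\infty}_{+}$ is a left Quillen functor (Definition \ref{def1.2}), hence sends a homotopy colimit in $\sCat$ to a homotopy colimit of tangent objects; the right-hand side of \wref{eqlimit} will then arise as the standard homotopy limit description of mapping spaces out of a homotopy colimit.

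More precisely, since $\mB \simeq \hocolim_{i \in I} \Phi(i)$ in $\sCat$, the identity map $\id_{\mB}$ is the homotopy colimit of the structure maps $\{\phi_i : \Phi(i) \to \mB\}_{i \in I}$, computed in the slice category $\sCat_{/\mB}$. Applying $\LL\Sigma^{\infty}_{+} : \sCat_{/\mB} \to \mT_{\mB}$, and using its compatibility with the base-change adjunction \wref{con1.3}, one obtains
\[
\cL_{\mB} \;\simeq\; \hocolim_{i \in I}\, \LL\Sigma^{\infty}_{+}(\phi_i) \;\simeq\; \hocolim_{i \in I}\, (\phi_i)_{!}\, \cL_{\Phi(i)}.
\]
Combining this with the $((\phi_i)_{!}, (\phi_i)^{*})$-adjunction of Theorem \ref{thm2.6} and the usual fact that $\Map(-,\F)$ sends homotopy colimits to homotopy limits, one computes
\[
\Map(\cL_B, \F) \;\simeq\; \holim_{i \in I}\, \Map((\phi_i)_{!}\, \cL_{\Psi(i)}, \F) \;\simeq\; \holim_{i \in I}\, \Map(\cL_{\Psi(i)}, \fN(\phi_i)^{*}\F),
\]
which is \wref{eqlimit}.

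For the obstruction class, recall from the proof of Proposition \ref{thm2.4} that $\beta$ is obtained by adjunction from the composite $\alpha \circ g$ together with the cofiber sequence defining the relative cotangent complex. Its pullback along each $\phi_i$ is, by naturality, the obstruction class $\beta_{\Psi(i)}$ attached to the lifting problem obtained by precomposing with $\phi_i$. Under the equivalence just derived, $\beta$ thus corresponds to the homotopy limit of the $\beta_{\Psi(i)}$, as claimed.

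The main obstacle will be justifying that $\LL\Sigma^{\infty}_{+}$ actually commutes with the homotopy colimit $\id_{\mB} \simeq \hocolim_i \phi_i$ in a way compatible with the identification of Theorem \ref{thm2.6}. In particular, one must check that filteredness of $I$ permits the homotopy colimit of the $\Tw(\Psi(i))$ to compute $\Tw(B)$, so that the restriction/Kan extension adjunctions assemble coherently over $I$; this uses that filtered colimits commute with the finite limits involved in the $\epsilon^{*}$-construction of Definition \ref{def2.5}, together with the compatibility of $\fN$ with filtered homotopy colimits of simplicial categories.
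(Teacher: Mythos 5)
Your argument is correct but takes a genuinely different route from the paper's. The paper decomposes the ambient category: since $I$ is filtered, $\Tw(-)$ commutes with the homotopy colimit, so $\Tw(B) \simeq \hocolim_i \Tw(\Psi(i))$; applying $\Fun(-,\Sp)$ turns this into a homotopy limit of quasi-categories, and then Lemma \ref{lemmapping} gives the mapping space as the corresponding homotopy limit, with $\fN(\phi_i)^{\ast}\cL_B = \cL_{\Psi(i)}$ since both are the constant functor on $\bSS[-1]$ by Theorem \ref{thm2.6}. You instead decompose the object being mapped out of: $\id_{\mB}$ is the homotopy colimit of the $\phi_i$ in $\sCat_{/\mB}$, and since $\LL\Sigma^{\infty}_{+}$ is a derived left adjoint compatible with base change, $\cL_B \simeq \hocolim_i (\phi_i)_!\cL_{\Psi(i)}$; then you pull the homotopy colimit out of the first slot of $\Map$ and use the $(\phi_i)_! \dashv (\phi_i)^{\ast}$ adjunction of Theorem \ref{thm2.6}. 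The two approaches buy different things: the paper's requires filteredness essentially (to make $\Tw$ commute with the colimit), yours does not — your argument would apply to an arbitrary homotopy colimit decomposition of $\mB$ — but in exchange you lean on the less elementary base-change compatibility $\LL\Sigma^{\infty}_{+}(\phi_i) \simeq (\phi_i)_!\cL_{\Phi(i)}$, which should be cited from \cite{HNPrasA} (see their \S 2.2 and Eq.\ 3.2.6) rather than just asserted. One small corrective: the ``main obstacle'' you raise at the end (whether $\hocolim_i \Tw(\Psi(i)) \simeq \Tw(B)$, and compatibility of $\fN$ with filtered colimits) is exactly what the paper's proof needs, not yours — once you work entirely on the $\mT$-side and translate via Theorem \ref{thm2.6} at the end, no commutation of $\Tw$ with colimits is required, since the equivalence $\mT_{\mB} \simeq \Fun(\Tw(B),\Sp)$ automatically preserves homotopy colimits. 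The second part (naturality of the obstruction class) is handled the same way in both proofs.
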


\begin{proof}
The twisted arrow category construction commutes with filtered colimits, and filtered
colimits are homotopy colimits. Since \w{\Fun(-, \Sp)} sends homotopy colimits to
homotopy limits, \w{\Fun(\Tw(B),\Sp)} is a homotopy limit of the functor
\w{\fN(I)\to\icat} given by
$$
i \mapsto\Fun(\Tw(\Psi(i)),\Sp)
$$
The first statement now follows from Lemma \ref{lemmapping}.
The second statement then follows from the naturality of the obstruction class
of Proposition \ref{thm2.4}, and Theorem \ref{thm2.6}.
\end{proof}

\begin{mysubsection}{The $k$-invariants of a simplicial category}
\label{skinv}
Our main example of small extension (\S \ref{exam1.6}) is the $n$-th
\emph{Postnikov section} \w{\Po{n}\mX} of a fibrant simplicial category $\mX$.
This is obtained by applying the \wwb{n+1}coskeleton functor
\w{\csk{n+1}} (right adjoint to the \wwb{n+1} skeleton functor \w[)]{\sk{n+1}}
to each simplicial mapping space of $\mX$.

Let \w{\Pi\sb{1}X} denote the fundamental groupoid of a space $X$ (see \cite[I.8]{GJarS}).
For any \w[,]{m\geq 2} the local system \w{\Pi\sb{1}X\to\AbGp} which sends
$x$ to \w{\pi\sb{n}(X, x)} (as a \ww{\pi\sb{1}(X,x)}-module) gives rise to a functor
\w[,]{\Pi\sp{m}\sb{n}:\Pi\sb{1}X\to\cS} sending $x$ to the twisted Eilenberg-Mac~Lane
space \w[.]{K\sb{\pi\sb{1}(X,x)}(\pi\sb{n}(X,x),m)}
We denote \w{\hocolim\sb{\Pi\sb{1}X}\ \Pi\sp{m}\sb{n}(x)} by
\w[,]{K\sb{\Pi\sb{1}X}(\Pi\sb{n}X,m))} or simply \w[.]{K(\Pi\sb{n}X,m)}

By \cite[Proposition 2.4]{DKSmitO}, we have a homotopy pullback in spaces:
$$
\xymatrix@R=14pt@C=25pt{
\Po{n}X \ar[d] \ar[r] & \ar[d]\sp{i}N(\Pi\sb{1}X) \\
\Po{n-1}X \ar[r]\sp<<<<{k\sb{n-1}} & K(\Pi\sb{n}X, n+1)~,
}
$$
where $i$ is the zero section of the canonical projection map
\w{K(\Pi\sb{n}X,n)\to N(\Pi\sb{1}X)} to the nerve of \w[.]{\Pi\sb{1}X}

These constructions can be made natural in spaces $X$, so for each fibrant
simplicial category $\mX$ we have diagrams
$$
\xymatrix@R=14pt@C=25pt{
\Po{n}\mX \ar[d] \ar[r] & \ar[d]\sp{i}N(\Pi\sb{1}\mX)) \\
\Po{n-1}\mX \ar[r]\sp<<<<{k\sb{n-1}} & K(\Pi\sb{n}\mX, n+1)~,
}
$$
where \w{K(\Pi\sb{n}\mX,n+1)} and \w{N(\Pi\sb{1}\mX)} are the result of
applying the respective constructions to each mapping space.

By \cite[Proposition 3.2]{DKSmitO}, this diagram exhibits the $n$-th Postnikov section
as a small extension of the \wwb{n-1}st Postnikov section.
The map \w{k\sb{n}} is the called the \emph{$n$-th $k$-invariant} of $\mX$.

For \w[,]{n\geq 1} this may be obtained functorially by constructing the
homotopy pushout square
\mytdiag[\label{eqkinv}]{
  \Po{n}\mX
  \ar[d]\sb{p\sp{n}\sb{\mX}}\ar[r]\sp{i} & \pi\sb{1}\mX\ar[d]\sp{q}\\
\Po{n-1}\mX \ar[r]\sp{j} & \mZ~,
}
\noindent in \w[.]{\sCat} By \cite[Proposition 6.4]{BDGoeR}, we have
\w[,]{\Po{n+1}\mZ\simeq K\sb{\mD}(\Pi\sb{n}\mX,n+1)} (where \w[),]{\mD:=\Pi\sb{1}\mX}
and the composite \w{p\sp{n+1}\sb{\mZ}\circ q} represents
\w{k\sb{n-1}\in H\sp{n+1}(\Po{n-1}\mX;\pi\sb{n}\mX)}
(using the usual topological indexing \wh see Remark \ref{exam2.2}).

The Dwyer-Kan-Smith cohomology of a simplicial category can be viewed as a special
case of classical Quillen cohomology (see \cite[Proposition 2.4]{DKSmitO}), and thus
coincides with spectral Andre-Quillen cohomology by \S \ref{scaqc}.
Furthermore, the obstructions for solving lifting problems of the form:
\mytdiag[\label{eqpost}]{
\mA \ar[r] \ar[d] &     \Po{n}\mX \ar[d]  \\
\mB \ar@{.>}[ur] \ar[r] &   \Po{n-1}\mX
}
\noindent given by \cite[Proposition 2.4]{DKSmitO}) are a
special case of those of Theorem \ref{thm2.7}, by \cite[Proposition 2.2.3]{HNPrasA}.
\end{mysubsection}

\begin{remark}\label{rmk1.9}
We say that a fibrant simplicial category (quasi-category) is an \emph{$n$-category}
if its mapping spaces are \wwb{n-1}truncated (see \cite[Section 2.3.4]{LurieHTT}).
In this sense, \w{\Po{n}\mX} is an \wwb{n+1}category.

The mapping spaces in $\mX$ can be naturally identified with those in
\w{\fN(\mX)} (see \cite[Section 2.3.3]{LurieHTT}). Thus, \w{\fN (\Po{n}(\mX))}
is an \wwb{n+1}category and \w{\fN(\Po{n}(\mX))\to\fN(\mX)}
induces a bijection on homotopy groups of mapping spaces in degree $\le n$ and the zero map in
degrees \w[.]{> n} It follows from \cite[Proposition 2.3.4.12]{LurieHTT}
that we can identify \w{\fN (\Po{n}(\mX))} with the \wwb{n+1}truncation of the
quasi-category \w{\fN (\mX)} as defined in \cite[Section 2.3.4]{LurieHTT}).
\end{remark}

%
%
\sect{Cubical Sets and their Model Structure}
\label{ccsms}

In this section we recall the standard model structure on cubical sets
from \cite[Theorem 8.1]{JarCat}, and the model structures for
cubical quasi-categories from \cite{DKLSattC} (Quillen equivalent to the 
Joyal model structure on simplicial sets).
We show that each simplicially enriched category is canonically a homotopy
colimit of $n$-cubes.

\begin{mysubsection}{The category of cubes}
\label{sccube}
The \emph{cube category} $\square$ is the full subcategory of \w{\Cat} consisting of
objects of the form \w[,]{\bone{n}} where \w{\bon} is the ordered set \w{0<1} of
\S \ref{snac}. The category $\square$ is generated
by the following class of maps, where for \w{i=1,\cdots,n} and \w{\epsilon=0,1}
we have:
\begin{enumerate}
\renewcommand{\labelenumi}{(\alph{enumi})~}
\item \emph{Face maps} \w{d\sb{i, \epsilon}\sp{n} :\bone{n-1} \to\bone{n}}  given by
$$
d\sp{n}\sb{i, \epsilon}(x\sb{1}, \cdots, x\sb{n-1}) = (x\sb{1}, \cdots, x\sb{i-1},
\epsilon, x\sb{i} \cdots, x\sb{n-1})~;
$$
\item \emph{Degeneracy maps}:
\w{\sigma\sp{n}\sb{i} : \bone{n} \to \bone{n-1}} given by
$$
  \sigma\sp{n}\sb{i}(x\sb{1}, \cdots, x\sb{n}) = (x\sb{1}, \cdots, x\sb{i-1}, x\sb{i+1}
  \cdots, x\sb{n})~;
$$
\item  \emph{Positive connections}: \w{\gamma\sb{i, 0}\sp{n}:\bone{n}\to\bone{n-1}}
given by
$$
  \gamma\sb{i, 0}\sp{n}(x\sb{1}, \cdots, x\sb{n}) \mapsto (x\sb{1}, \cdots, x\sb{i-1},
  \max(x\sb{i}, x\sb{i+1}), \cdots x\sb{n})~;
$$
\item\emph{Negative connections}: \w{\gamma\sb{i, 1}\sp{n} : \bone{n} \to \bone{n-1}}
given by
$$
 \gamma\sb{i, 0}\sp{n}(x\sb{1}, \cdots, x\sb{n}) \mapsto (x\sb{1}, \cdots,
 x\sb{i-1}, \min(x\sb{i}, x\sb{i+1}), \cdots x\sb{n})~.
$$
\end{enumerate}

Write \w{\cSet} for \w[,]{\Set\sp{\sq{\op}}} and call the objects of this
category \emph{cubical sets}.
\end{mysubsection}

\begin{defn}\label{def3.1}
 Consider the functor \w{\square \to\sSet} given by
  \w[.]{\sq{n} \mapsto (\Delta\sp{1})\sp{n}} The left Kan extension of this functor
  along the Yoneda embedding defines a \emph{triangulation functor}, which we denote by
  \w[.]{T\sb{\bullet}} This fits into an adjunction:
$$
T\sb{\bullet} :\cSet \leftrightarrows\sSet : U\sb{\bullet}~.
$$
\end{defn}
\begin{defn}\label{cubicalnervedef}
There is a \emph{cubical nerve} functor \w[,]{N\sb{\square} :\Cat \to\cSet}
which takes $C$ to the cubical set \w[.]{\bone{n} \mapsto\hom(\bone{n}, C)}
We have a natural isomorphism
\w{N \simeq T\sb{\bullet}U\sb{\bullet}N \cong T\sb{\bullet}N\sb{\square}}
(the usual nerve \wh see \ref{def3.1} below for the definition \w[).]{U\sb{\bullet}}
Thus, we can identify \w{N\sb{\square}(\bone{n})} with \w[,]{N(\bone{n})}
and denote either by \w{\sq{n}} (in \w{\Set\sp{\Dop}} or \w[).]{\cSet}
\end{defn}

\begin{notn}
The \emph{boundary} \w{\partial \sq{n}} of \w{\sq{n}} is the largest proper subobject
of \w{\sq{n}} (a union of its proper faces).
The \emph{cubical horn} \w{\Pi\sb{i, \epsilon}\sp{n}} is the union of all proper
faces of \w{\sq{n}} except for the \wwb{i,\epsilon}-face.

The \emph{critical edge} of \w{\sq{n}} with respect to a face \w{d\sb{i, \epsilon}}
is the map \w{f:[1]\to[1]\sp{n}}
given by \w[,]{f\sb{i} =\id\sb{[1]}} and \w{f_j = \cons\sb{1- \epsilon}}
for \w[.]{j \neq i}

For \w[,]{n \ge 2} \w[,]{1 \le i \le n} and \w[,]{\epsilon = 0, 1}
the \wwb{(i, \epsilon)}\emph{open horn} \w{\widehat{\Pi}\sb{i,\epsilon}\sp{n}}
for the quotient of the horn \w{\Pi\sp{n}\sb{(i, \epsilon)}} by the critical edge of
\w[.]{d\sb{i, \epsilon}} The \wwb{(i,\epsilon)}\emph{open cube}
\w{\widehat{\square}\sp{n}\sb{i, \epsilon}} is the quotient of the cube by
the critical edge of \w[,]{d\sb{i, \epsilon}}
and the  \wwb{i,\epsilon}\emph{open horn inclusion} is the map
\w[.]{\widehat{\Pi}\sb{i,\epsilon}\sp{n} \to \widehat{\square}\sp{n}\sb{i, \epsilon}}
\end{notn}

\begin{defn}\label{def3.2}
A \emph{cubical Kan complex} is a cubical set $X$ such that it has the right
lifting property with respect to all horn inclusions
\w[.]{\Pi\sb{i, \epsilon}\sp{n} \to \sq{n}}.
An \emph{inner horn} is a horn \w{\Pi\sb{i, \epsilon}\sp{n}\to\sq{n}} with
\w[.]{0 < i < n}
A \emph{cubical quasi-category} is a cubical set $X$ such that \w{X \to\ast}
has the right lifting property with respect to the inner open horn inclusions.
\end{defn}

\begin{thm}[\protect{\cite[Section 8]{JarCat}}]\label{JardineThmCubical}
The category of cubical sets admits a \emph{Kan model structure}, in which the
cofibrations are monomorphisms and the fibrant objects are the cubical Kan complexes.
The adjunction of \S \ref{def3.1} induces an Quillen equivalence with the standard
model structure on simplicial sets.
\end{thm}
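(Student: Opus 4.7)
The plan is to construct the model structure on $\cSet$ via a cofibrantly generated small object argument, and then establish the Quillen equivalence with $\sSet$ by checking the standard criteria for the adjunction $T_\bullet \dashv U_\bullet$ of Definition \ref{def3.1}; this follows the template of \cite{JarCat}.

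First, I would declare the cofibrations to be the monomorphisms, with generating set $I = \{\partial\sq{n} \hookrightarrow \sq{n}\}_{n \geq 0}$, and take the generating trivial cofibrations to be $J = \{\Pi_{i,\epsilon}^n \hookrightarrow \sq{n}\}$, the set of cubical horn inclusions. Weak equivalences are defined to be those maps $f$ for which $T_\bullet(f)$ is a weak equivalence in the Kan model structure on $\sSet$. Applying the small object argument to $I$ and $J$ produces the functorial factorizations required by the model category axioms, provided one verifies that every $J$-cofibration is a weak equivalence, and that every monomorphism which is a weak equivalence has the left lifting property against $J$-injective maps. Once these hold, the fibrant objects are by construction those having the right lifting property against $J$, matching Definition \ref{def3.2}.

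The main obstacle is the first of these verifications: showing that the triangulation of each cubical horn inclusion $\Pi_{i,\epsilon}^n \to \sq{n}$ is a trivial cofibration of simplicial sets. This is where the presence of the connection maps $\gamma^n_{i,\epsilon}$ in the cube category becomes essential, since without them the triangulation would fail to be anodyne. Concretely, I would use the standard triangulation of $T_\bullet \sq{n} = (\Delta^1)^n$ by its $n!$ shuffle simplices, and exhibit $T_\bullet(\Pi_{i,\epsilon}^n) \hookrightarrow (\Delta^1)^n$ as a finite composite of pushouts of simplicial inner and outer horn inclusions; the connections provide the additional degenerate cells needed to fill the horns in the correct order. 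A parallel argument then handles the second verification via the standard retract/detection lemma.

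For the Quillen equivalence, note that $T_\bullet$ preserves generating cofibrations, because the triangulation of a monomorphism is a monomorphism, and preserves weak equivalences by definition, so the adjunction is Quillen. Since every cubical set is cofibrant and $T_\bullet$ both preserves and reflects weak equivalences, it suffices to show that the derived counit $T_\bullet U_\bullet Y \to Y$ is a weak equivalence for every fibrant $Y \in \sSet$. I would handle this by a skeletal induction reducing to the case $Y = \Delta^n$, where the claim follows from the contractibility of both $\Delta^n$ and its cubical singular complex, combined with compatibility of their cell decompositions under the shuffle triangulation of each cube.
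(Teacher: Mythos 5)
The paper supplies no argument here: the theorem is stated with a bare citation to Jardine \cite[Section 8]{JarCat}, so there is no internal proof to compare against; you are filling in details the paper outsources. Your outline is the expected cofibrantly-generated blueprint, but the two steps you treat as routine carry the real weight, and the gloss on each is off. The claim that the connections are what make $T_\bullet\Pi^n_{i,\epsilon}\hookrightarrow(\Delta^1)^n$ anodyne is misplaced for the \emph{Kan} model structure: every horn, inner or outer, is a generating trivial cofibration, and the geometric retraction of the cube onto a horn lifts to a filtration by simplicial horn fillings even in the connection-free cube category, which is in fact the setting of Jardine's original argument. Connections become essential for the cubical \emph{Joyal} structure of Theorem~\ref{thm3.3}, where one must restrict to inner horns and track critical edges; you appear to have imported a constraint from that setting. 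Relatedly, the ``parallel argument'' you invoke for the other half of the model-category axioms is not actually parallel: one needs the separate lemma that a $J$-injective map which is a weak equivalence is already $I$-injective, and only then does the factor-and-retract argument apply.

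The more substantive gap is the derived counit. Your proposed skeletal induction on a fibrant $Y$, ``reducing to the case $Y=\Delta^n$,'' does not get off the ground: $U_\bullet$ is a right adjoint, so $T_\bullet U_\bullet$ does not commute with the pushouts attaching simplices, and a Kan complex is not built from $\Delta^n$'s in any sense an induction on skeleta of the target can exploit (its skeleta are not themselves fibrant). The standard route instead compares both $T_\bullet$ and $U_\bullet$ with geometric realization, using that each preserves realization up to weak equivalence, or invokes Cisinski's test-category machinery to identify the two homotopy theories abstractly, rather than arguing cell by cell on $Y$.
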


\begin{thm}\label{thm3.3} (\cite[Theorems 4.16, 4.2, \& 6.1]{DKLSattC})
The category of cubical sets admits a \emph{Joyal model structure},
in which the cofibrations are monomorphisms. and the fibrant objects are the
cubical quasi-categories, and the adjunction of \S \ref{def3.1} induces a Quillen
equivalence between this model structure and the Joyal model
structure on simplicial sets.
\end{thm}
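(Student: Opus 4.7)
The plan is to build on Theorem \ref{JardineThmCubical}, which supplies the cubical Kan model structure and its Quillen equivalence with the simplicial Kan structure, and to exhibit the cubical Joyal model structure as a left Bousfield localization compatible with the analogous localization on $\sSet$. This parallels Joyal's construction of the quasi-category model structure as a localization of Quillen's model structure on simplicial sets.

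First I would construct the model structure using Cisinski's theorem on cofibrantly generated model structures on presheaf categories. The cofibrations are taken to be monomorphisms, which is the natural choice for any presheaf topos, and as generating anodyne maps one takes the inner open horn inclusions $\widehat{\Pi}\sp{n}\sb{i,\epsilon} \to \widehat{\square}\sp{n}\sb{i,\epsilon}$ with $0 < i < n$. Cisinski's cellular machinery then produces a model structure whose fibrant objects are precisely those having the right lifting property against these generators, i.e.\ cubical quasi-categories in the sense of Definition \ref{def3.2}. Since $\cSet$ is locally presentable the small object argument applies, and the factorization axioms follow automatically.

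Next I would verify that the adjunction $T\sb{\bullet} \dashv U\sb{\bullet}$ of Definition \ref{def3.1} is a Quillen equivalence between the two Joyal structures. The left adjoint $T\sb{\bullet}$ preserves monomorphisms, so cofibrations are preserved. The crucial step is to check that $T\sb{\bullet}$ sends each generating inner open horn inclusion to a Joyal trivial cofibration: the open cube $\widehat{\square}\sp{n}\sb{i,\epsilon}$ is obtained from $\sq{n}$ by collapsing the critical edge, so after triangulation this amounts to collapsing a specific edge of $(\Delta\sp{1})\sp{n}$, and one must exhibit the resulting inclusion as an iterated pushout of inner horns. Once this is established, $T\sb{\bullet}$ is left Quillen; the Quillen equivalence then follows formally, since both Joyal structures are left Bousfield localizations of the corresponding Kan structures at sets of maps that correspond to one another under $T\sb{\bullet} \dashv U\sb{\bullet}$, Theorem \ref{JardineThmCubical} supplies the Kan-level Quillen equivalence, and localization preserves Quillen equivalences along such matched adjunctions.

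The main obstacle is the combinatorial verification that $T\sb{\bullet}$ of an inner open horn inclusion is a Joyal trivial cofibration. The presence of the critical edge collapse makes this more subtle than the analogous closed horn computation used in the Kan setting, because one must handle the quotient in a way compatible with Joyal's notion of inner anodyne. A direct approach filters $T\sb{\bullet}(\widehat{\square}\sp{n}\sb{i,\epsilon})$ by attaching simplicial cells along inner horns and checks inductively on dimension that each attachment is inner anodyne; alternatively, one can interpret the critical edge collapse in the framework of marked simplicial sets, where marking an inner edge allows one to contract it along a Joyal trivial cofibration, transferring the problem to a more tractable calculation on marked complexes.
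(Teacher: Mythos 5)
The paper does not give its own proof of this result: the parenthetical citation to Doherty--Kapulkin--Lindsey--Sattler (\cite{DKLSattC}) \emph{is} the proof, and the theorem is imported as a black box. Your proposal instead tries to reconstruct the argument, and while the broad shape (build the model structure via Cisinski's theory with the open box inclusions as generating anodynes, then verify that $T_\bullet \dashv U_\bullet$ is a Quillen equivalence) points at the right reference, the reduction you propose for the Quillen-equivalence step is based on a false premise, and the hard combinatorics are left undone.

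The concrete error is the claim that ``both Joyal structures are left Bousfield localizations of the corresponding Kan structures.'' This is backwards: in $\sSet$ it is the Kan--Quillen model structure that is a left Bousfield localization of the Joyal model structure (the Joyal structure has the same cofibrations but strictly fewer weak equivalences), and likewise on the cubical side. Consequently you cannot deduce the Quillen equivalence at the Joyal level from Theorem \ref{JardineThmCubical} by ``passing to localizations''; the implication runs the wrong way. Knowing that $T_\bullet$ creates weak homotopy equivalences says nothing a priori about its interaction with the strictly smaller class of categorical equivalences. The DKLS argument has to establish the Joyal-level equivalence on its own, and it does so through a chain of intermediate model structures on marked and structurally marked cubical sets --- exactly the machinery recalled in Remark \ref{rmk3.5} --- rather than by ``unlocalizing'' Jardine's theorem.

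Two further gaps even within the Cisinski step. First, Cisinski's construction produces a model structure whose anodyne class is a saturation; identifying the fibrant objects with the cubical quasi-categories of Definition \ref{def3.2} requires showing that the open box inclusions form a pseudo-generating set, and that verification is itself non-trivial (and involves the critical-edge quotients in an essential way). Second, you yourself flag that the key computation --- that $T_\bullet$ carries each open horn inclusion to an inner anodyne map in $\sSet$ --- is unresolved; that computation, together with the marked-complex analysis needed to pass between the three cubical model categories of Remark \ref{rmk3.5}, constitutes the real content of \cite{DKLSattC}, and the proposal does not make a start on it. Since the paper simply cites the result, the honest thing to do here is likewise to cite \cite[Theorems 4.2, 4.16, 6.1]{DKLSattC} rather than attempt a shortcut that does not exist.
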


\begin{remark}\label{rmk3.5}

In \cite{DKLSattC}, the authors defined two other variants of the Joyal model structure:

They first introduce a slight enlargement \w{\square\sb{\sharp}}
of $\square$, whose objects consist of \w{\bone{n}} for \w{n \in \NN} and
\w[.]{\bne} The maps of \w{\square\sb{\sharp}} are generated by those of $\square$
(\S \ref{sccube}), along with two maps \w{\phi:\bone{0}\to\bne} and
\w[,]{\zeta:\bne\to\bone{0}} subject to the identity
\w[.]{\zeta \phi = \sigma\sb{1}\sp{1}} We write
\w{\sq{1}\sb{e}} for the presheaf on \w{\square\sb{\sharp}} represented by \w[.]{\bne}

The objects of \w[,]{\cSet'':=\Set\sp{\square\sb{\sharp}\sp{\op}}}
are called \emph{structurally marked cubical sets}; those for which the map induced
by $\phi$ is a monomorphism are called simply \emph{marked cubical sets}, and the
full subcategory of such is denoted by \w[.]{\cSet'}

In the case of marked cubical sets, we can think
of the induced map \w{\phi : X\sb{e} \to X\sb{1}} as the inclusion of `marked edges'
of $X$. The \emph{minimal marking} is simply the inclusion of the degenerate edges;
the fact that the image of $\phi$ must contain the degenerate edges follows from the
identity \w[.]{\zeta \phi = \sigma\sb{1}\sp{1}}

By \cite[Proposition 4.3 and Theorem 3.7]{DKLSattC}, there are Quillen equivalences
$$
\cSet \leftrightarrows \cSet' \rightleftarrows\cSet''~.
$$
\end{remark}

%
%
\sect{Homotopy Colimit Decomposition into Cubes}
\label{chcdcs}

In this section we show how each cubical quasi-category, or fibrant simplicial category,
can be exhibited as a homotopy colimit of cubes:

\begin{thm}\label{thm4.1}
Any fibrant simplicial category $\mX$ is the homotopy colimit
(in the Bergner model structure) of the projection
\w{(\DK(\bon)\sp{\ast})\sb{/\mX} \to \sCat} (see \S \ref{snac}),
where \w{(\DK(\bon)\sp{\ast})\sb{/\mX}}
is \w[.]{\square \times\sb{\sCat} \sCat\sb{/\mX}}
\end{thm}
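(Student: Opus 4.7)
The plan is to transfer the tautological ``presheaf is a colimit of its representables'' decomposition across the Quillen equivalences connecting cubical sets to simplicial categories. The first step is to establish the cubical analogue: for any cubical quasi-category $K$, the canonical isomorphism $\colim\sb{\sq{n}\to K}\sq{n}\cong K$ given by Yoneda is in fact a homotopy colimit in the cubical Joyal model structure of Theorem~\ref{thm3.3}.

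The second step is to apply this to $K=N\sb{\square}(\mX)$ and push the resulting decomposition through the composite left Quillen equivalence $\fC\circ T\sb{\bullet}\colon\cSet\to\sSet\to\sCat$, obtained by combining Theorem~\ref{thm3.3} with the Joyal--Bergner equivalence. Left Quillen equivalences preserve homotopy colimits of cofibrant diagrams, so we obtain
\[
\mX\;\simeq\;\fC\fN\mX\;\simeq\;\fC T\sb{\bullet}(N\sb{\square}\mX)\;\simeq\;\hocolim\sb{\sq{n}\to N\sb{\square}\mX}\fC T\sb{\bullet}(\sq{n})
\]
in the Bergner model structure. Since $T\sb{\bullet}(\sq{n})=(\Delta\sp{1})\sp{n}$ is cofibrant in the Joyal model structure, $\fC T\sb{\bullet}(\sq{n})$ is a cofibrant simplicial category modelling the $n$-cube, canonically weakly equivalent to $\DK(\bon)\sp{n}$.

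Identifying the indexing category is then formal: by the $T\sb{\bullet}\dashv U\sb{\bullet}$ and $\fC\dashv\fN$ adjunctions, a cube $\sq{n}\to N\sb{\square}\mX$ corresponds functorially to a map $\DK(\bon)\sp{n}\to\mX$ in $\sCat$, yielding an isomorphism of indexing categories $\square\sb{/N\sb{\square}\mX}\cong\square\times\sb{\sCat}\sCat\sb{/\mX}=(\DK(\bon)\sp{\ast})\sb{/\mX}$, so that the diagram being hocolim'd is precisely the projection of the theorem.

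I expect the main obstacle to be justifying the homotopy-colimit claim in the first step, i.e.\ showing that the tautological diagram on the category of elements $\square\sb{/K}$ is cofibrant in the relevant model structure on diagrams. I would argue this by skeletal induction: filter $K$ by $\sk{n}K$, the union of all cubes of dimension $\le n$, and show each inclusion $\sk{n-1}K\hookrightarrow\sk{n}K$ is realized as a pushout of a coproduct of boundary inclusions $\partial\sq{n}\hookrightarrow\sq{n}$ indexed by the non-degenerate $n$-cubes of $K$. The required unique Eilenberg--Zilber decomposition of cubes, accommodating the connection operators $\gamma\sb{i,\epsilon}$ of $\square$, is the one nontrivial input where the structure of $\square$ beyond that of $\Delta$ matters; granted this, a standard pushout-along-cofibration argument together with the compatibility of triangulation with colimits realizes $K$ (and hence $\mX$) as the stated homotopy colimit.
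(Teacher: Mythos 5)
Your overall plan — establish the hocolim decomposition for cubical quasi-categories in the cubical Joyal model structure, then transfer it through the left Quillen equivalences $T\sb{\bullet}$ and $\fC$, identifying the index categories by adjunction — is exactly the paper's plan; your second and third steps are essentially correct and mirror the proof of Theorem \ref{thm4.1} (which invokes Corollary \ref{cor4.7} and then \cite[Theorem 6.7]{Riehl1}).

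The gap is in your first step, and you have correctly identified it as the crux — but the skeletal-induction argument you sketch does not close it. Filtering $K$ by skeleta and exhibiting $\sk{n-1}K\hookrightarrow\sk{n}K$ as a pushout of a coproduct of boundary inclusions along the non-degenerate $n$-cubes presents $K$ as an iterated homotopy pushout; in other words it shows that the diagram indexed by the \emph{non-degenerate} cubes with \emph{face maps} (what the paper calls $\square\sb{/K}\sp{\ND}$) has the right colimit, and only under the assumption that faces of non-degenerate cubes are again non-degenerate, i.e.\ that $K$ is a cubical \emph{complex} in the sense of Definition \ref{def5.2}. This is precisely the content of Proposition \ref{thm5.3} (via the Fjellbo-type argument), and Remark \ref{rmk5.4} points out explicitly that this decomposition is not adequate for a general cubical quasi-category: one cannot subdivide to reach a complex without destroying fibrancy and the Joyal homotopy type. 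Moreover, even granting a cubical complex, what you obtain is the hocolim over $\square\sb{/K}\sp{\ND}$, not over the full slice $\square\sb{/K}$ as the theorem requires; bridging that gap is a cofinality statement which the paper proves in Lemma \ref{lem5.7} and Theorem \ref{thm5.8} \emph{using} Theorem \ref{thm4.1}, so invoking it here would be circular.

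What the paper does instead, and what seems to be genuinely needed, is the $\infty$-categorical route through the Cisinski machinery: Lemma \ref{lem4.2} embeds the cubical Joyal model structure (in its structurally marked form $\cSet''$) as a left Bousfield localization of a global injective model structure on simplicial presheaves over $\square\sb{\sharp}$; Lemma \ref{lem4.4} then applies \cite[Theorem 4.2.4.1]{LurieHTT} together with the $\infty$-categorical density of representables (Lemma \ref{lem4.3}, after \cite[Proposition 5.1.3.1]{LurieHTT}) to get the hocolim over $(\square\sb{\sharp})\sb{/C}$ for fibrant $C$; and Lemma \ref{lem4.5} handles the cofinality needed to drop the marking and pass to $\square\sb{/C}$. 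None of this reduces to a skeletal filtration: the underlying obstruction is the familiar one that the last-vertex-type comparison $N(\square\sb{/K})\to K$ is a Kan equivalence but not a Joyal equivalence, so the elementary Kan-model-structure proof of ``a (cubical) set is a hocolim of its (cubes) simplices'' does not survive in the Joyal setting without this extra input. If you want to salvage the elementary filtration approach, what you will get is Proposition \ref{thm5.3} for cubical complexes; to reach the full Theorem \ref{thm4.1} you should instead work through the Cisinski/Bousfield-localization argument.
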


The proof of this Theorem requires several preliminary results.

\begin{lemma}\label{lem4.2}
The Joyal model structure on \w{\cSet''} from Remark \ref{rmk3.5}
is Quillen equivalent to a  Bousfield localization of the global
injective model structure on
\w[.]{(\Set\sp{\Dop})\sp{\square\sb{\sharp}}}
\end{lemma}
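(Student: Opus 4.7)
The plan is to exhibit a Quillen adjunction between the Joyal model structure on $\cSet''$ and an appropriate left Bousfield localization of the global injective model structure on $(\Set^{\Dop})^{\square_{\sharp}}$, and then verify that this adjunction is a Quillen equivalence. The natural adjunction to use is a constant-embedding/evaluation pair: let $\iota$ send a marked cubical set $X$ to the $\square_{\sharp}$-diagram of simplicial sets assigning to each $c$ the discrete simplicial set on $X(c)$ (composed, if necessary, with the evident duality on $\square_{\sharp}$ to reconcile variances), and let its right adjoint be objectwise $0$-simplex evaluation. Since $\iota$ preserves monomorphisms, it preserves cofibrations on both sides.

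Next, I would identify the set at which to Bousfield localize. Take the union of two sets. The first consists of the images under $\iota$ of the generating trivial cofibrations of the Joyal model structure on $\cSet''$, ensuring that $\iota$ sends Joyal equivalences to local weak equivalences. The second is a simplicial-collapse set of maps of the form $Y(c) \otimes (\Delta^{1} \to \Delta^{0})$, indexed by representables $Y(c)$, which forces the simplicial direction of any locally fibrant diagram to be essentially discrete. Existence of this localization is guaranteed by Hirschhorn's theorem, since the global injective model structure is combinatorial and left proper.

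With the localization in place, one direction of the Quillen equivalence is immediate: the composite $\mathrm{ev}_{0} \circ \iota$ is the identity on $\cSet''$, so the counit is an isomorphism. For the other direction, one must show that for every fibrant $F$ in the localized simplicial presheaf category, the unit map $F \to \iota(\mathrm{ev}_{0}(F))$ is a weak equivalence in the localization. This follows from the simplicial-collapse relations, which force each $F(c)$ to be weakly equivalent to the constant simplicial set on $F(c)_{0}$, so that $F$ itself becomes equivalent to the discrete embedding of its $0$-th level, which by construction lands in the Joyal-fibrant part.

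The main obstacle is this final verification — that the combined localization actually recovers the Joyal homotopy theory on the $\cSet''$ side without introducing spurious equivalences. This is essentially an instance of Dugger's presentation theorem, which asserts that any combinatorial model category is Quillen equivalent to a left Bousfield localization of a simplicial presheaf category; one can either invoke it directly, or argue by explicit construction, taking the fibrant replacement in the localized category to be ``first collapse the simplicial direction, then Joyal-fibrantly replace the resulting discrete presheaf,'' and checking that this has the correct universal property with respect to the local model structure.
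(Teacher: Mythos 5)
There is a genuine gap in your proposal, and the central step fails for a concrete reason.

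Your ``simplicial-collapse'' maps $Y(c) \otimes (\Delta^{1} \to \Delta^{0})$ are \emph{already} weak equivalences in the global injective model structure on $(\Set^{\Dop})^{\square_{\sharp}}$, since $\Delta^{1} \to \Delta^{0}$ is a simplicial weak equivalence and the injective weak equivalences are the objectwise ones. Left Bousfield localization at maps that are already weak equivalences is the identity localization, so this set of maps imposes no condition whatsoever. In particular it is false that these localizations ``force the simplicial direction of any locally fibrant diagram to be essentially discrete'' --- a left Bousfield localization can only shrink the class of fibrant objects by imposing lifting conditions, and it cannot coerce a genuinely non-discrete injectively fibrant diagram (e.g.\ the constant diagram on $S^{1}$) to become weakly discrete. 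Because your verification that the unit $F \to \iota(\mathrm{ev}_{0}(F))$ is a local weak equivalence rests entirely on this bogus collapse, the Quillen-equivalence half of the argument has no support. Your fallback appeal to Dugger's presentation theorem also does not close the gap: Dugger produces a Quillen equivalence with a localization of the \emph{projective} simplicial presheaf structure on a small category built out of the model category, not a localization of the \emph{injective} structure on the \emph{specific} indexing category $\square_{\sharp}$, which is what the statement and its downstream uses (Lemma \ref{lem4.4}) require.

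The paper's proof goes a genuinely different route that is tailored to the presheaf setting: since the Joyal structure on $\cSet''$ is a Cisinski model structure (by \cite[Theorem 3.6]{DKLSattC}), its class of weak equivalences is an accessible localizer; the \emph{simplicial completion} of this localizer (\cite[Proposition 2.20]{AraHQ}, \cite[2.3.4.7]{CisiP}) yields a Cisinski model structure on $(\Set^{\Dop})^{\square_{\sharp}}$ Quillen equivalent to the original one, and this is a Bousfield localization of the injective structure precisely when the localizer is \emph{regular}. Regularity in turn follows once $\square_{\sharp}$ is shown to be a skeletal (\emph{squelettique}) category, i.e.\ a Reedy category with $(\square_{\sharp})_{+}$ consisting of monomorphisms and $(\square_{\sharp})_{-}$ of split epimorphisms, which is checked from \cite[Proposition 1.37]{DKLSattC}. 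This last verification is the substantive point, and it is exactly the feature of $\square_{\sharp}$ that your approach never touches; that your construction makes no use of the cube category's Reedy structure is itself a signal that it cannot produce the required comparison.
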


\begin{proof}
The construction in \cite[Theorem 3.6]{DKLSattC} implies that the model structure on
\w{\cSet''} can be obtained via the methods of Cisinski (\cite{CisiP}).
The class $\W$ of weak equivalences is thus an accessible localizer, in the
terminology of \cite{CisiP}. Using simplicial completion techniques
(i.e., \cite[Proposition 2.20]{AraHQ} or \cite[2.3.4.7]{CisiP}),
we can identify the model structure on \w{\cSet''} with a Cisinski model structure
on \w[,]{(\Set \sp{\Dop})\sp{\square\sb{\sharp}}} whose weak equivalences
we denote by \w[.]{\W\sb{\Delta}} It suffices to show that $\W$ is a
\emph{regular} localizer, in the terminology of \cite[Theorem 3.4.36]{CisiP}.
If the category \w{\square\sb{\sharp}} is \emph{squelettique}
(see \cite[Definition 8.1.1]{CisiP}), then all localizers are regular by
\cite[Proposition 8.2.2]{CisiP}.

By \cite[Remark 8.1.2]{CisiP}, it suffices to show that \w{\square\sb{\sharp}}
is a Reedy category in which \w{(\square\sb{\sharp})\sb{+}} and
\w[,]{(\square\sb{\sharp})\sb{-}} respectively, consists of monomorphisms and
 split epimorphisms. This  follows from \cite[Proposition 1.37]{DKLSattC}.
\end{proof}

Suppose that $X$ is a (quasi-)category, \w{C \subseteq X} a full subcategory, and
\w[.]{x \in X} We will write \w{C\sb{/x}} for the pullback:
$$
\xymatrix@R=14pt@C=25pt{
C\sb{/x} \ar[d] \ar[r] & X\sb{/x} \ar[d] \\
 C \ar[r] & X.
}
$$

The following is essentially \cite[Proposition 5.1.3.1]{LurieHTT}
(see also \cite[Lemma 2.4]{HaugC}):

\begin{lemma}\label{lem4.3}
  Let $J$ be a small quasi-category and \w{\phi : J\sp{\op} \to \cS}
  the presheaf corresponding to the right fibration \w[.]{E \to J}
  Then $\phi$ is the colimit of the composite:
$$
E \xra{p} J \xra{y} \cS\sp{J\sp{\op}}~,
$$
\noindent where $y$ is the Yoneda embedding.
\end{lemma}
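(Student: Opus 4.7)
The plan is to identify $E$ with the $\infty$-category of elements of $\phi$ via the straightening/unstraightening equivalence, and then to invoke the density theorem which says that every presheaf is canonically the colimit of the tautological diagram of representables indexed by its category of elements.

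First, I would recall the straightening/unstraightening equivalence (\cite[Theorem 2.2.1.2]{LurieHTT}): right fibrations $E \to J$ over a quasi-category $J$ correspond, up to equivalence, to functors $\phi : J\op \to \cS$. Under this correspondence, the fiber of $E$ over an object $j \in J$ is equivalent to the space $\phi(j)$, and the total space $E$ itself can be identified, as an object over $J$, with the comma $\infty$-category $J\sb{/\phi} := J \times\sb{\cS\sp{J\op}} (\cS\sp{J\op})\sb{/\phi}$, the $\infty$-categorical analogue of the category of elements of $\phi$.

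Second, I would invoke the density theorem for presheaf $\infty$-categories (see \cite[Lemma 5.1.5.3]{LurieHTT} and \cite[Corollary 5.1.6.12]{LurieHTT}): the Yoneda embedding $y : J \to \cS\sp{J\op}$ exhibits $\cS\sp{J\op}$ as the free cocompletion of $J$, and consequently every presheaf $\phi$ is canonically the colimit of the tautological diagram
$$
J\sb{/\phi} \xra{} J \xra{y} \cS\sp{J\op}~.
$$
Combining this with the identification $E \simeq J\sb{/\phi}$ over $J$ from the previous step yields the desired formula: the composite $E \xra{p} J \xra{y} \cS\sp{J\op}$ has colimit $\phi$.

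The main obstacle is to ensure that the equivalence $E \simeq J\sb{/\phi}$ is genuinely an equivalence \emph{over $J$}, i.e.\ that it intertwines the projection $p : E \to J$ with the forgetful functor $J\sb{/\phi} \to J$, so that the two resulting diagrams into $\cS\sp{J\op}$ truly coincide up to equivalence. This compatibility is the substantive content of the straightening construction; once it is granted (as it is by \cite{LurieHTT}), the conclusion follows by assembling the two ingredients. Alternatively, one may simply cite \cite[Proposition 5.1.3.1]{LurieHTT} or \cite[Lemma 2.4]{HaugC}, where the statement is proved in this precise form.
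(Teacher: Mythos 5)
Your proposal is correct and matches the paper's approach: the paper gives no proof of its own but prefaces the lemma with the remark that it ``is essentially \cite[Proposition 5.1.3.1]{LurieHTT} (see also \cite[Lemma 2.4]{HaugC})'', which are exactly the references you land on. Your unpacking of that citation --- straightening to identify $E$ with $J\sb{/\phi}$ over $J$, then the density theorem --- is an accurate summary of how those sources prove it.
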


\begin{lemma}\label{lem4.4}
  If \w{C\in\cSet''} is fibrant, it is a homotopy colimit of the projection map
  \w[.]{(\boldsymbol{\square}\sb{\sharp})\sb{/C} \to\cSet''}
\end{lemma}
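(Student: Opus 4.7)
The plan is to deduce this from the $\infty$-categorical density result of Lemma \ref{lem4.3}, applied to $C$ viewed as a discrete (set-valued) presheaf of spaces on $\square_\sharp$, and then transfer the resulting colimit statement along the Quillen equivalence of Lemma \ref{lem4.2} to obtain a homotopy colimit in $\cSet''$.

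First, I would record the model-categorical setup. By Lemma \ref{lem4.2}, the Joyal model structure on $\cSet''$ is Quillen equivalent to a Bousfield localization of the global injective model structure on $(\Set^{\Dop})^{\square_\sharp}$. Any fibrant $C\in\cSet''$ is in particular objectwise fibrant when viewed in this injective model category, and under the resulting equivalence of underlying quasi-categories we may regard $C$ as a presheaf $\phi_C\colon \fN(\square_\sharp)^{\op} \to \cS$ (taking discrete values, since $C$ is set-valued on each $\bone{n}$ and on $\bne$).

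Next I would identify the indexing diagram. Lemma \ref{lem4.3} requires the right fibration over $J=\fN(\square_\sharp)$ classifying $\phi_C$; for a discrete presheaf this right fibration is precisely the nerve of the category of elements, which is tautologically $(\square_\sharp)_{/C}$. Applying Lemma \ref{lem4.3} then shows that $\phi_C$ is the colimit, in $\Fun(\fN(\square_\sharp)^{\op},\cS)$, of the composite
\[
(\square_\sharp)_{/C}~\longrightarrow~\fN(\square_\sharp)~\xra{y}~\Fun(\fN(\square_\sharp)^{\op},\cS),
\]
i.e.\ of the Yoneda embedding applied to the cubes (with marking) indexing elements of $C$. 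Colimits in an $\infty$-category of presheaves are by definition homotopy colimits, and under the underlying-$\infty$-category model of the injective structure these are the usual homotopy colimits of presheaves of spaces.

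Finally I would transport this back to $\cSet''$. The Quillen equivalence of Lemma \ref{lem4.2} between $\cSet''$ and the localized injective model structure preserves homotopy colimits; since $C$ is fibrant, the representables $\sq{n}$ and $\sq{1}_e$ (viewed in either category) correspond under the equivalence to $\phi_C$'s summands, and the colimit cocone survives the localization. Hence $C$ is a homotopy colimit in $\cSet''$ of the projection $(\square_\sharp)_{/C}\to\cSet''$. The main obstacle is the bookkeeping in the third step: namely, verifying that the colimit produced by Lemma \ref{lem4.3} in $\Fun(\fN(\square_\sharp)^{\op},\cS)$ really does correspond, under the Quillen equivalence of Lemma \ref{lem4.2} and the subsequent localization, to the homotopy colimit of representables in $\cSet''$; everything else is a direct identification of the slice $(\square_\sharp)_{/C}$ with the classifying right fibration of a discrete presheaf.
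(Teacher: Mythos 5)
Your proposal follows the same route as the paper: transfer to the localized injective model structure on $(\Set^{\Dop})^{\square_\sharp}$ via Lemma \ref{lem4.2}, identify $(\square_\sharp)_{/C}$ with the right fibration classifying the (discrete) presheaf represented by $C$, and apply Lemma \ref{lem4.3}. The step you flag as the main obstacle is exactly where the paper does a bit more work: it cites \cite[Proposition 2.20]{AraHQ} for a commutative square showing that the left Quillen functor $\phi\colon\cSet''\to(\Set^{\Dop})^{\square_\sharp}$ takes $\square_\sharp$-representables to $\square_\sharp$-representables, then notes that $\phi$ induces a Quillen equivalence of slice model categories $(\cSet'')_{/C}\to((\Set^{\Dop})^{\square_\sharp})_{/\phi C}$ so that the statement ``$C$ is the homotopy colimit of its cubes'' in one is equivalent to the same statement in the other, and finally invokes \cite[Theorem 4.2.4.1]{LurieHTT} to translate between model-categorical homotopy colimits and colimits in the underlying quasi-category. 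You should replace the assertion that ``the Quillen equivalence preserves homotopy colimits'' (Quillen equivalences, as such, do not; left Quillen functors do) with this slice-equivalence argument, which cleanly handles the transfer in both directions.
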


\begin{proof}
The following argument is largely the same as that of \cite[Lemma 2.5]{HaugC},
which says that each quasi-category is a homotopy colimit of
its category of simplices:

By \cite[Proposition 2.20]{AraHQ}, we have a commutative diagram
$$
\xymatrix{
   \square\sb{\sharp} \ar[r]\sb{\phi|\sb{\square\sb{\sharp}}} \ar[d]\sb{y} &
   \square\sb{\sharp} \ar[d]\sb{y} \\
 \cSet'' \ar[r]\sb{\phi} & (\Set\sp{\Dop})\sp{\square\sb{\sharp}}~,
}
$$
\noindent where the bottom horizontal map is the Quillen
equivalence of Lemma \ref{lem4.2} and the vertical maps are
Yoneda. Furthermore the Quillen equivalence induces a Quillen
equivalence of slice model categories \w[.]{(\cSet'')\sb{/C}
\to ((\Set\sp{\Dop})\sp{\square\sb{\sharp}})\sb{/C}} Thus,
it suffices to prove the statement for the model structure
on \w{(\Set\sp{\Dop})\sp{\square\sb{\sharp}}} given by
Lemma \ref{lem4.2}, which is a left Bousfield localization of the global
injective model structure on \w[.]{(\Set \sp{\Dop})\sp{\square\sb{\sharp}}}
Using \cite[Theorem 4.2.4.1]{LurieHTT}, it suffices to show that the colimit of the
composite of the map of quasi-categories
\w{(\square\sb{\sharp})\sb{/C} \to \square\sb{\sharp} \xra{y}
\cS\sp{\square\sb{\sharp}}} is the functor represented by $C$. The
right fibration \w{(\square\sb{\sharp})\sb{/C} \to
\square\sb{\sharp}} classifies the functor in
\w{\cS\sp{\square\sb{\sharp}}} given by \w[:]{e
\mapsto\Map\sb{\square\sb{\sharp}}(e, C)} that is, the image of
$C$ under the Yoneda embedding. Thus, the result follows from
Lemma \ref{lem4.3}.
\end{proof}

\begin{lemma}\label{lem4.5}
Any minimally marked cubical quasi-category $C$ (\S \ref{rmk3.5})
is the homotopy colimit of the projection \w[.]{\square\sb{/C}\to\cSet''}
\end{lemma}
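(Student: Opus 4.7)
The plan is to combine Lemma \ref{lem4.4} with a cofinality argument exploiting the minimality of the marking. By Lemma \ref{lem4.4}, any fibrant object of $\cSet''$, in particular the minimally marked $C$, is the homotopy colimit of the projection from its category of elements $(\square\sb{\sharp})\sb{/C}$ to $\cSet''$. Since $\square$ is a full subcategory of $\square\sb{\sharp}$, pulling back along the Yoneda embedding yields a full subcategory inclusion $\iota \colon \square\sb{/C} \hookrightarrow (\square\sb{\sharp})\sb{/C}$. It thus suffices to prove that $\iota$ is homotopy initial, so that the homotopy colimit may be computed over $\square\sb{/C}$ with the same value.

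By the quasi-categorical Quillen Theorem A (\cite[Theorem 4.1.3.1]{LurieHTT}), this reduces to verifying that for each object $x$ of $(\square\sb{\sharp})\sb{/C}$, the slice $\iota \downarrow x = (\square\sb{/C})\sb{/x}$ (equivalently, its opposite presented as a comma construction) has weakly contractible nerve. When $x \colon \bone{n} \to C$ already lies in $\square\sb{/C}$, the identity map on $x$ is a terminal object in the slice, which is therefore contractible.

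The essential case is when $x \colon \bne \to C$ in $\cSet''$, where the minimality of the marking enters decisively: the only marked edges of $C$ are the degenerate ones, so $x$ factors uniquely as $\bne \xra{\zeta} \bone{0} \xra{\bar{x}} C$ for some vertex $\bar{x}$ of $C$. Since $\zeta$ is the only generating morphism of $\square\sb{\sharp}$ from $\bne$ to an object of $\square$, every morphism $\bne \to \bone{n}$ in $\square\sb{\sharp}$ factors through $\zeta$; moreover the section $\phi$ makes $\zeta$ a split epimorphism, rendering this factorization unique. Precomposition with $\zeta$ therefore induces an isomorphism of categories $(\square\sb{/C})\sb{/\bar{x}} \xra{\cong} (\square\sb{/C})\sb{/x}$, and the source has $\id\sb{\bar{x}}$ as a terminal object, giving the desired contractibility.

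The step I expect to require the most care is the bookkeeping for morphisms out of $\bne$ in $\square\sb{\sharp}$: one must verify from the presentation by generators and the relation $\zeta\phi = \sigma\sp{1}\sb{1}$ that every such morphism into $\square$ factors uniquely through $\zeta$, and that for a minimally marked $C$ this factorization is compatible on the nose with the canonical factorization $x = \bar{x} \circ \zeta$. Once this compatibility is checked, Quillen's Theorem A and Lemma \ref{lem4.4} combine to give the result.
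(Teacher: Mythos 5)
The guiding idea here — that minimality of the marking forces every map $\square_e \to C$ to factor uniquely through a vertex, so that $\square_e$ contributes nothing to the colimit beyond what $\square$ already sees — is precisely the paper's, and the reduction to Lemma~\ref{lem4.4} is the right strategy. However, the cofinality bookkeeping has the variance reversed. To descend a homotopy colimit along the inclusion $\iota\colon \square_{/C}\hookrightarrow (\square_{\sharp})_{/C}$ you need $\iota$ to be \emph{cofinal} (final), not initial: initial functors preserve limits, cofinal ones preserve colimits. By \cite[Theorem 4.1.3.1]{LurieHTT} cofinality is detected by contractibility of the \emph{under}-comma categories $x\downarrow\iota$, i.e.\ $\square_{/C}\times_{(\square_{\sharp})_{/C}}\bigl((\square_{\sharp})_{/C}\bigr)_{x/}$, whose objects are factorizations of $x$ through an object of $\square$. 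You instead verify contractibility of the over-slices $\iota\downarrow x=(\square_{/C})_{/x}$, which is the criterion for \emph{initial}ness. This is also internally inconsistent with your own discussion: your bookkeeping remark correctly observes that the relevant morphisms are those $\square_e\to\sq{n}$ out of $\square_e$ factoring through $\zeta$ (the right analysis for the under-comma), but the displayed slices $(\square_{/C})_{/x}$ are governed by morphisms $\sq{n}\to\square_e$ \emph{into} $\square_e$ (factoring through $\phi$), and the resulting object you exhibit is terminal rather than initial.

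Initialness does not imply cofinality, even for a full inclusion (for instance $\{0\}\hookrightarrow\{0\to 1\}$ is initial but not cofinal), so the argument as written does not establish the lemma — it happens that both comma categories are contractible here, but the one you contract is not the one \cite[Theorem 4.1.3.1]{LurieHTT} asks for. The fix is local and the substance is what you already wrote in the hedge at the end: replace the over-comma by the under-comma, note that every $g\colon\square_e\to\sq{n}$ in $\square_\sharp$ factors uniquely as $v\circ\zeta$ for a vertex $v\colon\sq{0}\to\sq{n}$ (since $\zeta$ is the only generator out of $\square_e$ landing in $\square$ and it is split epic), identify $x\downarrow\iota$ with $(\square_{/C})_{\bar x/}$, and exhibit the canonical factorization $\zeta\colon x\to\bar x$ as an \emph{initial} object. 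That is exactly the paper's argument.
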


\begin{proof}
Since \w{\cSet''} is a left Bousfield localization of
\w[,]{(\Set \sp{\Dop})\sp{\square\sb{\sharp}}} by \cite[Theorem 4.2.4.1]{LurieHTT}
it suffices to prove that $C$ is the colimit of
\w{\square\sb{/C} \to \cS\sp{\square\sb{\sharp}}} in \w[.]{\cS\sp{\square\sb{\sharp}}}
By the dual \cite[Theorem 4.1.3.1]{LurieHTT} and the preceding paragraph,
it suffices to show that
\w{((\square\sb{\sharp})\sb{/C})\sb{\square\sb{e} \xra{\phi} C/}}
is weakly contractible for each \w[.]{\phi:\square\sb{e}\to C}

However, every map \w{\square\sb{e} \to C} factors through a map
\w[,]{\phi':\sq{0} \to C} since every marked edge of $C$ is degenerate, and
the diagram
$$
\xymatrix@R=15pt@C=25pt{
\square\sb{e} \ar[d] \ar[r]\sp{\phi} & C \\
\sq{0} \ar[ur]\sb{\phi'} &
 }
$$
\noindent is the terminal object in
\w[.]{(\square\sb{\sharp})\sb{/C})\sb{(\square\sb{e} \xra{\phi} C)/}}
\end{proof}

\begin{corollary}\label{cor4.6}
Every cubical quasi-category $X$ can be identified with the homotopy colimit
of \w[.]{\square\sb{/X} \to\cSet}
\end{corollary}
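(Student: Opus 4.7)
The plan is to deduce Corollary \ref{cor4.6} from Lemma \ref{lem4.5} by transferring along the Quillen equivalences
$\cSet\leftrightarrows \cSet'\leftrightarrows \cSet''$ of Remark \ref{rmk3.5}. The left adjoint $\cSet\to\cSet''$ equips an ordinary cubical set with the \emph{minimal marking}, i.e.\ declares only the degenerate edges to be marked; call this functor $(-)^{\flat}$. Since $X$ is a cubical quasi-category, $X^{\flat}$ is a fibrant object of $\cSet''$ (this is part of the definition of the model structure in \cite{DKLSattC}), so Lemma \ref{lem4.5} applies to $X^{\flat}$.

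First I would observe that the slice category $\square_{/X^{\flat}}$ appearing in Lemma \ref{lem4.5} is canonically isomorphic to $\square_{/X}$: a map $\sq{n}\to X^{\flat}$ in $\cSet''$ is the same datum as a map $\sq{n}\to X$ in $\cSet$, because $\sq{n}$, viewed in $\cSet''$, has its marked edges exactly the degenerate ones and the minimal marking on $X^{\flat}$ is characterized by sending degenerate edges of any cube to marked edges of $X^{\flat}$. Thus Lemma \ref{lem4.5} exhibits $X^{\flat}$ as the homotopy colimit in $\cSet''$ of the composite $\square_{/X}\to\square\to\cSet''$, where the second arrow is $\bone{n}\mapsto\sq{n}$ with the minimal marking.

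Next I would push this homotopy colimit decomposition through the Quillen equivalence $\cSet\rightleftarrows\cSet''$. The left adjoint $(-)^{\flat}\colon\cSet\to\cSet''$ is a left Quillen equivalence, so it preserves and reflects (derived) homotopy colimits between cofibrant objects; both $X$ and the cubes $\sq{n}$ are cofibrant, since every object of $\cSet$ is cofibrant in the Joyal model structure of Theorem \ref{thm3.3}. Therefore the homotopy colimit in $\cSet''$ computed above is the image under $(-)^{\flat}$ of the homotopy colimit of $\square_{/X}\to\cSet$, and since $(-)^{\flat}$ reflects weak equivalences between fibrant objects (being the left half of a Quillen equivalence), this identifies $X$ with $\hocolim_{\square_{/X}}\sq{n}$ in $\cSet$.

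The main point to be careful about is the bookkeeping in the second paragraph: one has to check that the indexing category $\square_{/X^{\flat}}$ really does coincide with $\square_{/X}$, and that the diagram $\sq{n}\mapsto \sq{n}^{\flat}$ is (up to weak equivalence of diagrams) the image under $(-)^{\flat}$ of $\sq{n}\mapsto\sq{n}$. Once this identification is made, the transfer along the Quillen equivalence is formal. The rest of the argument is just unwinding definitions and does not require any new input.
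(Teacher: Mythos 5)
Your overall strategy — transfer Lemma \ref{lem4.5} through minimal marking along the Quillen equivalences of Remark \ref{rmk3.5}, after identifying $\square_{/X^\flat}$ with $\square_{/X}$ — is the same as the paper's, and the conclusion is correct. There are, however, two inaccuracies worth flagging. First, the claim that $X^\flat$ is fibrant in $\cSet''$ is not justified (and almost certainly false by analogy with marked simplicial sets, where the minimally marked $X^\flat$ of a quasi-category is not fibrant; the fibrant objects have equivalences marked, not just degeneracies). Fortunately this does not matter: Lemma \ref{lem4.5} is deliberately stated for \emph{minimally marked cubical quasi-categories} rather than for fibrant objects of $\cSet''$, so no fibrancy hypothesis is needed and you should simply drop that assertion.

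Second, you assert that $(-)^{\flat}\colon\cSet\to\cSet''$ is a single left Quillen equivalence. The paper's Remark \ref{rmk3.5} only supplies Quillen equivalences through the intermediate category $\cSet'$, and the paper's own proof of Corollary \ref{cor4.6} describes the functor $\cSet\to\cSet''$ as a composite of a \emph{left} adjoint $\cSet\to\cSet'$ with a \emph{right} adjoint (the inclusion) $\cSet'\hookrightarrow\cSet''$; such a composite is not automatically left Quillen, so the phrase ``so it preserves and reflects homotopy colimits between cofibrant objects'' needs a different justification. The transfer does still go through because both factors are Quillen equivalences and hence induce equivalences on underlying $\infty$-categories (which preserve and reflect homotopy colimits); you should either argue that way or give a reference showing the composite is itself left Quillen. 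Also, a left Quillen equivalence reflects weak equivalences between \emph{cofibrant} objects, not fibrant ones, as you wrote; this is harmless here since every object of $\cSet$ is cofibrant, but the statement as written is inverted.
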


\begin{proof}
The composites of the left and right adjoint of the Quillen
equivalences of \cite[Theorem 3.7 and Proposition 4.3]{DKLSattC},
induce maps \w[,]{\cSet \to\cSet' \to\cSet''} so that the
composite takes each cubical set to a minimally marked cubical
set. The result follow from Lemmas \ref{lem4.4} and \ref{lem4.5}.
\end{proof}

\begin{corollary}\label{cor4.7}
Every quasi-category $X$ can be identified with the homotopy colimit
of \w[.]{\square\sb{/X} \to\sSet}
\end{corollary}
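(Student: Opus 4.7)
The plan is to transfer Corollary \ref{cor4.6} along the Quillen equivalence between the Joyal model structures on cubical and simplicial sets given by the triangulation adjunction $T_\bullet \dashv U_\bullet$ of Definition \ref{def3.1}, via Theorem \ref{thm3.3}.

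First, I would take a quasi-category $X$ and pass to the cubical side by forming $X' := U_\bullet X$. Since $U_\bullet$ is right Quillen for the Joyal model structures (Theorem \ref{thm3.3}) and $X$ is fibrant, $X'$ is a cubical quasi-category. Corollary \ref{cor4.6} then gives that $X'$ is the homotopy colimit of the projection $\square_{/X'} \to \cSet$ in the Joyal model structure. Next, I would apply the left Quillen functor $T_\bullet$, which preserves homotopy colimits, to obtain that $T_\bullet X'$ is the homotopy colimit of the composite $\square_{/X'} \to \cSet \xra{T_\bullet} \sSet$.

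To finish, two identifications are needed. On the codomain side, because $T_\bullet \dashv U_\bullet$ is a Quillen equivalence and $X$ is Joyal-fibrant, the derived counit $T_\bullet U_\bullet X \to X$ is a Joyal weak equivalence, so $T_\bullet X' \simeq X$. On the indexing side, the adjunction $T_\bullet \dashv U_\bullet$ together with the identification $T_\bullet(\sq{n}) = (\Delta^1)^n = \sq{n}$ in $\sSet$ of Definition \ref{cubicalnervedef} yields an isomorphism of categories $\square_{/X'} \cong \square_{/X}$, under which the two projections to $\sSet$ agree (on representables $T_\bullet$ is the identity on cubes). Combining these two identifications with the previous paragraph produces the asserted homotopy colimit presentation.

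The main thing to be careful about, rather than a genuine obstacle, is that all homotopy colimits here are taken in the Joyal model structures (both cubical and simplicial), and that the Quillen pair $T_\bullet \dashv U_\bullet$ must be used in its Joyal incarnation (Theorem \ref{thm3.3}) rather than the Kan one (Theorem \ref{JardineThmCubical}); once this is fixed, preservation of homotopy colimits by the left Quillen functor $T_\bullet$ and the fact that the derived counit is a weak equivalence on fibrant objects do the rest.
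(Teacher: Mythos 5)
Your proposal is correct and follows essentially the same argument as the paper's proof, which writes out the same zig-zag of weak equivalences: applying Corollary \ref{cor4.6} to \w[,]{U\sb{\bullet}X} pushing the resulting homotopy colimit through the left Quillen functor \w[,]{T\sb{\bullet}} invoking the derived counit equivalence \w[,]{T\sb{\bullet}U\sb{\bullet}X\xra{\simeq}X} and identifying \w{\square\sb{/U\sb{\bullet}X}\cong\square\sb{/X}} via the adjunction and the fact that \w{T\sb{\bullet}} fixes representable cubes. Your caution about using the Joyal (rather than Kan) incarnation of the Quillen pair is well placed and is implicit in the paper.
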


\begin{proof}
Using the fact that \w{T\sb{\bullet} \dashv U\sb{\bullet}} is a Quillen equivalence,
we have a zig-zag of weak equivalences:
$$
X~\xleftarrow{\simeq}~T\sb{\bullet}U\sb{\bullet}X~\xra{\simeq}~
T\sb{\bullet} \underset{\sq{n}\to U\sb{\bullet}X}{\hocolim}~\sq{n}~\xra{\simeq}~
\underset{\sq{n} \to U\sb{\bullet}X}{\hocolim} T\sb{\bullet}~\sq{n}~=~
\underset{T\sb{\bullet}\sq{n} \to X}{\hocolim} T\sb{\bullet}~\sq{n}~.
$$
\end{proof}

\begin{proof}[Proof of Theorem \ref{thm4.1}]
Let $\mX$ be a fibrant simplicial category. Using Corollary \ref{cor4.7} and
  \cite[Theorem 6.7]{Riehl1}, we have the following weak equivalences
$$
\mX \simeq \fC(\underset{\sq{n} \to \fN(\mX) \in S\sb{\mX}}{\hocolim} \sq{n}) \simeq
\underset{\DK(\bone{n}) \to \mX}{\hocolim} \fC(\sq{n})
\simeq \underset{\DK(\bone{n}) \to \mX}{\hocolim} \DK(\bone{n})~.
$$
\end{proof}

\begin{defn}\label{def4.8}
A simplicial or cubical set is a \emph{finite cubical complex} if it is
a finite iterated pushout of cubes along face maps.
\end{defn}

We call an cubical set a \emph{finite cubical complex} if it can be expressed
as a finite pushout of cubes along face maps.

\begin{lemma}\label{lem4.9}
  Any simplicial set $X$ can be identified with the homotopy colimit of its
  finite cubical subcomplexes.
\end{lemma}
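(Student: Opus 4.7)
The plan is to (i) extend Corollary \ref{cor4.7} from quasi-categories to arbitrary simplicial sets via the standard density-of-simplices decomposition, and (ii) enlarge the indexing category from cubes to all finite cubical complexes by an iterated homotopy colimit interchange.

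For step (i), any simplicial set $X$ is the homotopy colimit of its category of simplices: $X \simeq \hocolim\sb{\Delta\sp{n}\to X} \Delta\sp{n}$ (the classical density/Bousfield-Kan statement, using that the associated diagram is cofibrant in the projective model structure on simplicial diagrams). Each $\Delta\sp{n}$ is the nerve of the linearly ordered set $\bn$, hence a quasi-category, so Corollary \ref{cor4.7} applies to give $\Delta\sp{n} \simeq \hocolim\sb{\sq{k}\to\Delta\sp{n}} T\sb{\bullet}\sq{k}$. Composing these iterated homotopy colimits via the Grothendieck construction and projecting the resulting diagram to $\square\sb{/X}$ yields $X \simeq \hocolim\sb{\sq{k}\to X} T\sb{\bullet}\sq{k}$, extending Corollary \ref{cor4.7} to arbitrary simplicial sets.

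For step (ii), Definition \ref{def4.8} exhibits each finite cubical complex $C$ as a finite iterated pushout of cubes along face maps, so $C \simeq \hocolim\sb{\sq{k}\to C} T\sb{\bullet}\sq{k}$ in $\sSet$. Substituting this identification into the homotopy colimit over $\FinCub\sb{/X}$ of the forgetful functor $C \mapsto C$ and interchanging iterated homotopy colimits via the Grothendieck construction gives
$$
  \hocolim\sb{C\in\FinCub\sb{/X}} C
  ~\simeq~ \hocolim\sb{(C,\,\sq{k}\to C)\,\in\,\int\sb{\FinCub\sb{/X}}\square\sb{/(-)}} T\sb{\bullet}\sq{k}~.
$$
The natural projection from the Grothendieck construction to $\square\sb{/X}$, sending $(C\to X,\sq{k}\to C)$ to the composite $\sq{k}\to X$, is homotopy cofinal: for each $\sq{k}\to X$, the relevant comma category consists of factorizations of $\sq{k}\to X$ through a finite cubical complex and has the initial object $(T\sb{\bullet}\sq{k},\id)$. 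Quillen's Theorem A then identifies the right-hand side with $\hocolim\sb{\sq{k}\to X} T\sb{\bullet}\sq{k}$, which by step (i) is $X$.

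The main technical obstacle is the interchange of iterated homotopy colimits through the Grothendieck construction together with the cofinality verification; both are standard techniques but require careful bookkeeping of the various indexing categories and attention to the projective model structure on the relevant functor categories. A secondary concern is to ensure that the density decomposition in step (i) can be invoked for non-fibrant $X$, which can be verified either by reducing to finite simplicial subcomplexes via a further filtered colimit, or by citing an explicit Bousfield-Kan cofibrancy result.
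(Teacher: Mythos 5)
Your approach is genuinely different from the paper's, and noticeably more elaborate. The paper's proof is a one-paragraph argument: it lets $J$ be the (filtered) poset of finite cubical subcomplexes of $X$, observes that the indexing category $\square\sb{/X}$ of the decomposition $X \simeq \hocolim\sb{\square\sb{/X}} T\sb{\bullet}\sq{n}$ (from Theorem~\ref{thm4.1}/Corollary~\ref{cor4.7}) is covered by the subcategories $\square\sb{/K}\sp{\inc}$ of face inclusions into each $K\in J$, and then invokes \cite[Remark 4.2.3.9]{LurieHTT} to rewrite that homotopy colimit as a filtered homotopy colimit over $J$ of the inner homotopy colimits, each of which is $K$. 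The filteredness of $J$ does the work that your Grothendieck-construction interchange plus Quillen--Theorem--A cofinality argument does, but much more cheaply, and with no need to extend Corollary~\ref{cor4.7} beyond the quasi-category case first.

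There are two concrete gaps in your version. First, in step (i), the phrase ``projecting the resulting diagram to $\square\sb{/X}$ yields $X \simeq \hocolim\sb{\sq{k}\to X} T\sb{\bullet}\sq{k}$'' conceals an unverified cofinality claim: the comma category of a fixed $\sigma\colon\sq{k}\to X$ under the functor $\int\sb{\Delta\sb{/X}}\square\sb{/\Deln{(-)}}\to\square\sb{/X}$ consists of factorizations of $\sigma$ through some $\sq{m}\to\Deln{n}\to X$, and unlike the analogous comma category in your step (ii), it has no evident initial object (there are many simplices of $X$ receiving $\sigma$, and the cube cannot choose one canonically), so contractibility needs an actual argument. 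Second, in step (ii) the inference ``Definition~\ref{def4.8} exhibits $C$ as a finite iterated pushout of cubes along face maps, so $C \simeq \hocolim\sb{\sq{k}\to C} T\sb{\bullet}\sq{k}$'' is not a tautology: the iterated pushout is indexed by the face-inclusion cell diagram of $C$, not by the full slice $\square\sb{/C}$ (which also contains all degenerate cubes). Passing from one to the other requires its own cofinality verification, essentially the unique non-degenerate factorization of cubes (cf.\ \cite[Proposition 1.18]{DKLSattC}); the paper sidesteps this by working throughout with $\square\sb{/K}\sp{\inc}$ rather than $\square\sb{/K}$. With both of those cofinality arguments supplied, your route would close, but as written it leaves two nontrivial steps unjustified where the paper's filtered-union decomposition makes them unnecessary.
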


\begin{proof}
Given a finite cubical complex $K$, we will write \w{\square\sb{/K}\sp{\inc}} for
the category consisting of face inclusions in $K$ and face maps between them.

Let $J$ be the poset of finite cubical subcomplexes of $X$. and let
  \w{\phi : J \to \Set \sp{\Dop}} be the functor
\w[.]{K \mapsto \square\sb{/K}\sp{\inc}},
where the superscript in means inclusions of faces in $K$
Since \w[,]{\square\sb{/X} = \cup\sb{K \in J} \square\sb{/K}\sp{\inc}} it follows from
\cite[Remark 4.2.3.9]{LurieHTT} and Theorem \ref{thm4.1} that
$$
X = \underset{K \in J}{\hocolim} \,\square\sb{/K}\sp{\inc}  =
\underset{K \in J}{\hocolim} \,K
$$
\end{proof}

%
%
\sect{Colimit Decomposition into Cubical Complexes}
\label{chcdcscc}

In this section, we show that in certain situations the homotopy colimit
decomposition of the preceding section simplifies to a strict colimit. This yields
an even simpler description of the colimit decomposition of cohomology.

In particular, a cubical complex can be written as a strict
colimit of face maps between cubes, which is also a homotopy
colimit. However, this is not always useful from our perspective,
so we prove an additional colimit decomposition statement (Theorem
\ref{thm5.8}), which applies in greater generality.

\begin{defn}\label{def5.1}
Let $X$ (respectively, $\mX$) be a cubical set (respectively, category).
A  map \w{\sq{n} \to X}
(respectively, \w[)]{\DK(\bone{n}) \to \mX} is called
\emph{degenerate} if it factors through either a degeneracy or a
connection. We will write \w{\square\sb{/X}\sp{\ND}} (respectively,
\w[)]{(\DK\bone{\ast})\sb{/\mX}\sp{\ND}} for the category whose object are non-degenerate
cubes over $X$ (respectively, $\mX$) and whose morphisms are face maps between
non-degenerate cubes.
\end{defn}

\begin{defn}\label{def5.2}
We say that a simplicial or cubical set is a simplicial or cubical
\emph{complex} if each face of a non-degenerate simplex (cube) is non-degenerate
\end{defn}

By \cite[Proposition 5.2.1]{Fjellbo}, every simplicial set $X$ is a colimit of
its non-degenerate simplices under the face maps, which can be expressed
as an iterated pushout of face maps, and is thus in fact a homotopy colimit
This result can be adapted to yield:

\begin{prop}\label{thm5.3}
Every cubical complex $X$ can be identified with the homotopy colimit
of the projection map \w[.]{(\square\sb{/X}\sp{\ND}) \to \cSet}
\end{prop}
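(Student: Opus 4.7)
The plan is to realize $X$ as the colimit of a Reedy cofibrant functor on a direct category, so that the strict colimit automatically agrees with the homotopy colimit.

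First I would equip $\square\sb{/X}\sp{\ND}$ with the degree function $\deg(\sigma:\sq{n}\to X)=n$. Every morphism in $\square\sb{/X}\sp{\ND}$ is, by definition, a face map between cubes, and every non-identity face map strictly raises dimension; this makes $\square\sb{/X}\sp{\ND}$ into a direct Reedy category. Consider the tautological functor $F:\square\sb{/X}\sp{\ND}\to\cSet$ sending $\sigma:\sq{n}\to X$ to $\sq{n}$. I would then verify that $F$ is Reedy cofibrant by computing the latching object at each $\sigma:\sq{n}\to X$. The latching category consists of non-identity morphisms $\tau\to\sigma$ in $\square\sb{/X}\sp{\ND}$, i.e.\ proper face inclusions $\sq{k}\hookrightarrow\sq{n}$ whose composite with $\sigma$ is a non-degenerate cube $\tau$ in $X$. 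By the cubical complex hypothesis (Definition \ref{def5.2}), every proper face of the non-degenerate cube $\sigma$ is already non-degenerate in $X$, so this indexing category coincides with the category of proper face inclusions into $\sq{n}$. Its colimit in $\cSet$ is precisely $\partial\sq{n}$, so the latching map $L\sb{\sigma}F\to F(\sigma)$ is the boundary inclusion $\partial\sq{n}\hookrightarrow\sq{n}$, a cofibration (monomorphism) in the Kan model structure of Theorem \ref{JardineThmCubical}.

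Next, by standard Reedy theory for direct categories, the canonical comparison map $\hocolim\sb{\square\sb{/X}\sp{\ND}}F\to\colim\sb{\square\sb{/X}\sp{\ND}}F$ is a weak equivalence whenever $F$ is Reedy cofibrant. It then remains to identify the strict colimit $\colim F$ with $X$. This is the cubical analogue of the fact, exploited in \cite[Proposition 5.2.1]{Fjellbo}, that a simplicial complex is the colimit of its non-degenerate simplices under face inclusions: the evident map $\colim F\to X$ sends the top cell of each summand $\sq{n}$ bijectively to the corresponding non-degenerate cube in $X$, and a cubical Eilenberg-Zilber-type lemma, together with the complex hypothesis, handles the degenerate cells by expressing each cell of $X$ uniquely as a degeneracy/connection applied to a non-degenerate cube.

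The main obstacle will be the identification of the latching category at $\sigma$ with the ordinary face category of $\sq{n}$. Because $\square$ has non-trivial relations among face maps, degeneracies, and connections (\S\ref{sccube}), one must check that no face of $\sigma$ secretly factors through a degeneracy or connection in $X$; this is exactly the content of the cubical complex assumption, and is where the hypothesis of Proposition \ref{thm5.3} is genuinely used (as opposed to the weaker statement of Theorem \ref{thm4.1}, which required all cubes, not only non-degenerate ones).
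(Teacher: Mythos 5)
Your argument is correct and is essentially the paper's, just unfolded: the paper cites Fjellbo's ``iterated pushout of boundary inclusions'' argument together with the cubical Eilenberg--Zilber lemma \cite[Proposition 1.18]{DKLSattC}, and your Reedy-cofibrancy computation is precisely what underlies that iterated-pushout description. You also correctly isolate the role of the complex hypothesis — identifying the latching category with the full face poset of $\sq{n}$ so that the latching map is $\partial\sq{n}\hookrightarrow\sq{n}$ — which is where the paper's cited argument likewise relies on it.
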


\begin{proof}
By \cite[Proposition 1.18]{DKLSattC}, each map \w{\sq{n}\to X} can be factored
as a degenerate map followed by a non-degenerate one, and as in the proof
of \cite[\emph{loc.\ cit.}]{Fjellbo} we deduce that $X$ can be written
as the (homotopy) colimit of \w[.]{\square\sb{/X}\sp{\ND} \to \cSet}
\end{proof}

\begin{remark}\label{rmk5.4}
  The above colimit decomposition is not sufficient for all of our purposes.
  Barycentric and cubical subdivision (for the latter, see \cite{JarCub})
  destroy fibrancy and do not preserve Joyal homotopy type.
  Thus one cannot apply the decomposition of Proposition \ref{thm5.3} directly
  to give an appropriate homotopy colimit decomposition of a general
  (cubical) quasi-category. We will prove an appropriate statement in
  Theorem \ref{thm5.8} below.
\end{remark}

\begin{lemma}\label{lem5.5}
Suppose that $\mX$ is a fibrant simplicial category, which is cofibrant in the
Bergner model structure. Then each composition operation in $\mX$ is a monomorphism.
\end{lemma}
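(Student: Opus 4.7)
The plan is to reduce via a retract argument to the case where $\mX$ is a cell complex in the Bergner model structure, and then to analyze the mapping spaces of such a cell complex explicitly, using the Dwyer--Kan style presentation as a simplicial category that is levelwise free on a degeneracy-closed graded set of generating arrows.

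First I would verify that the monomorphism property of composition is preserved under retracts in $\sCat$. Suppose $\mX$ is a retract of $\mY$ via a section $i\colon\mX\to\mY$ and retraction $r\colon\mY\to\mX$ with $r\circ i=\id_{\mX}$. If each composition map $\circ_{\mY}\colon\Map_{\mY}(y,z)\times\Map_{\mY}(x,y)\to\Map_{\mY}(x,z)$ is a monomorphism, then given $n$-simplices $(h,f)$ and $(h',f')$ of $\Map_{\mX}(y,z)\times\Map_{\mX}(x,y)$ with $h\circ f=h'\circ f'$, applying $i$ yields $i(h)\circ i(f)=i(h')\circ i(f')$, and injectivity in $\mY$ gives $(i(h),i(f))=(i(h'),i(f'))$, so applying $r$ recovers $(h,f)=(h',f')$. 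This reduces the problem to cell complexes, since by the usual small object argument any cofibrant object in Bergner's model structure is a retract of a cell complex built by transfinite composition of pushouts along the generating cofibrations $\emptyset\to\{*\}$ and $U(\partial\Delta^{n})\hookrightarrow U(\Delta^{n})$.

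Second, I would establish the explicit description of a cell complex $\mY$. By induction along the cellular filtration, the mapping space $\Map_{\mY}(x,y)$ decomposes as a coproduct indexed by composable strings of attached generating cells from $x$ to $y$, with each summand a product of the simplicial sets appearing on the right-hand side of the generating cofibrations. Under this identification the composition operation
\[
\circ\colon \Map_{\mY}(y,z)\times\Map_{\mY}(x,y)\;\longrightarrow\;\Map_{\mY}(x,z)
\]
is the map which, on indices, concatenates a string ending at $y$ with a string starting at $y$, and on simplices is the corresponding identification of product components. Because the source and target objects of each generator in the string are recorded by the indexing data, the concatenation of indexing strings is injective, and on each indexed component it is the identity map of a product of simplicial sets. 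Hence the composition is a monomorphism of simplicial sets.

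The main obstacle is to make the inductive description of the mapping spaces rigorous enough that one really controls \emph{all} simplices (not just $0$-simplices), compatibly with degeneracies and faces, across the transfinite cellular filtration. The delicate bookkeeping step is to check that the indexing by composable chains of generating cells is preserved under pushout along $U(\partial\Delta^{n})\hookrightarrow U(\Delta^{n})$: at each stage, a cell attachment modifies only one component of the coproduct (attaching a new top-dimensional simplex inside one indexed summand), and does not merge distinct summands. Once this is checked, the injectivity of composition on the passage to the colimit follows by a standard filtered colimit argument, combined with the retract observation above to conclude the general cofibrant case.
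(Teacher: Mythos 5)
Your overall strategy coincides with the paper's: reduce via retracts to the ``free'' case, then argue injectivity of composition from the free structure. The paper simply cites Dwyer--Kan's Theorem~7.6 (a cofibrant simplicial category is a retract of one that is a free category in each simplicial degree), whereas you re-derive a cellular version of this from the small object argument and then try to describe the mapping spaces of a cell complex explicitly. That is unnecessary extra work, and it is also where the argument runs into trouble.

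There are two concrete problems. First, the bookkeeping is not as clean as you assert: pushing out $\mY_\alpha$ along $U(\partial\Delta^{n})\hookrightarrow U(\Delta^{n})$ attached at $(a,b)$ does not just ``attach a new top-dimensional simplex inside one indexed summand'' of $\Map_{\mY_\alpha}(a,b)$; it also creates new composites $h\circ(\text{new cell})\circ g$ in $\Map_{\mY_{\alpha+1}}(x,z)$ for every $g\colon x\to a$ and $h\colon b\to z$ already present, so \emph{all} mapping spaces reachable through $(a,b)$ are modified, not a single summand. Second, and more seriously, the claim that ``the concatenation of indexing strings is injective'' is simply false. In a free category, the binary composition map $\Map(y,z)\times\Map(x,y)\to\Map(x,z)$ need not be a monomorphism: a reduced word passing through $y$ more than once admits more than one factorization at $y$. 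The free simplicial category on a single endomorphism $f\colon x\to x$ already gives $\Map(x,x)\cong\NN$ with composition equal to addition, which is not injective. What \emph{is} true for free categories, and what the paper's subsequent uses (Lemma~\ref{lem5.6}, Theorem~\ref{thm5.8}) actually require of ``each composition operation is a monomorphism,'' is left- and right-cancellativity: for a fixed morphism $h$, the maps $h\circ(-)$ and $(-)\circ h$ are monic, because concatenating a fixed word on one side is injective. Your argument as written tries to prove the strictly stronger (and false) binary statement, so the concluding step does not hold. To repair it you should argue cancellativity for free simplicial categories (directly, or by citing the Dwyer--Kan characterization as the paper does), and then apply the retract-preservation observation, which is the correct and sufficient amount of injectivity.
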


\begin{proof}
  This follows from the characterization of \cite[Theorem 7.6]{DKanL},
  which says that each cofibrant simplicial category is a retract of one which is
  free in each simplicial degree.
\end{proof}

\begin{lemma}\label{lem5.6}
Suppose that $\mX$ is a fibrant cubical category in which each composition operation
is monic. Suppose that \w{\phi: \sq{n} \to \fN(\mX)}
is an $n$-cube. Let $0$ and $t$ denote the initial and terminal objects of
\w[,]{\sq{n}} respectively. Then $\phi$ is degenerate if and only if the map
\w{\fC(\square \sp{n})(0, t) \xrightarrow{\phi} \mX(\phi(0), \phi(t))}
induced by the adjoint map \w{\phi: \fC (\sq{n}) \to \mX} factors through some
$$
    (\square \sp{n})(0, t) \xrightarrow {\gamma\sb{i, \epsilon}}
    (\square \sp{n-1})(\gamma\sb{i, \epsilon}(0), \gamma\sb{i, \epsilon}(t)),\text{ or}
    (\square \sp{n})(0, t)
    \xrightarrow{\sigma\sb{i}} (\square \sp{n-1})(\sigma\sb{i}(0), \sigma\sb{i}(t))~.
$$
\end{lemma}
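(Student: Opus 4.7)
The plan is to prove the two implications separately. The forward direction ($\Rightarrow$) is immediate from functoriality: if $\phi = \psi \circ \delta$ for $\delta \in \{\sigma_{i}^{n}, \gamma_{i,\epsilon}^{n}\}$ and some $(n-1)$-cube $\psi: \sq{n-1} \to \fN(\mX)$, then after passage to the adjoint $\fC(\sq{n}) \to \mX$ and restriction to the top mapping space $\fC(\sq{n})(0,t)$, the result factors as $\fC(\sq{n})(0,t) \xra{\fC(\delta)} \fC(\sq{n-1})(\delta(0),\delta(t)) \to \mX(\phi(0),\phi(t))$, as claimed.

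For the backward direction ($\Leftarrow$), suppose the map $\fC(\sq{n})(0,t) \to \mX(\phi(0),\phi(t))$ factors through the corresponding $\fC(\delta)$ at the top mapping space. The plan is to show that the entire adjoint map $\phi: \fC(\sq{n}) \to \mX$ factors through $\fC(\delta): \fC(\sq{n}) \to \fC(\sq{n-1})$. The guiding principle is that the monic composition hypothesis forces strong rigidity on simplicial functors out of $\fC(\sq{n})$: every sub-mapping-space $\fC(\sq{n})(x,y)$ embeds into $\fC(\sq{n})(0,t)$ via pre- and post-composition with any chosen chains $0 \to x$ and $y \to t$, and this embedding remains injective in $\mX(\phi(0),\phi(t))$ after applying $\phi$. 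Thus the factorization at the top uniquely determines candidate factorizations at every sub-mapping-space, which one verifies assemble into a genuine simplicial functor $\psi: \fC(\sq{n-1}) \to \mX$ refining the factorization. One first needs to check that the assumed factorization forces $\phi$ on vertices to be $\delta$-invariant (i.e.\ $\phi(v) = \phi(v')$ whenever $\delta(v) = \delta(v')$), by tracing distinct chains through the identified vertices and invoking monicity of composition in $\mX$ to compare the resulting composites in $\mX(\phi(0),\phi(t))$.

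The main obstacle will be the coherence check in the final step: verifying that the per-mapping-space factorizations obtained by the recovery procedure are independent of the auxiliary chain choices and are compatible with composition, so that they define a simplicial functor rather than an incoherent family of maps. The monic composition hypothesis is the essential tool here, converting what is a priori pointwise data at a single mapping space into the rigid information needed to specify a bona fide simplicial functor out of $\fC(\sq{n-1})$.
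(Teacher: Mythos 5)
Your plan coincides with the paper's proof: the paper likewise establishes the nontrivial direction by embedding each $\fC(\sq{n})(i,j)$ into the top mapping space $\fC(\sq{n})(0,t)$ via pre- and post-composition with chosen chains, using monicity of composition in $\mX$ to invert this embedding and define the candidate factorization $\phi'(i,j)$, and then checking independence of the auxiliary chain choice and compatibility with composition. The vertex-invariance point you flag (that $\phi$ on objects must be $\delta$-invariant so the recovered data assembles into an actual simplicial functor out of $\fC(\sq{n-1})$) is indeed needed, though the paper's proof leaves it implicit.
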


\begin{proof}
Sufficiency is clear. For necessity, we can assume \w[,]{n \ge 2} since the
statement is trivially true for \w[.]{n = 1}

Without loss of generality, assume that
\w{\Map \sb{\fC(\sq{n})}(0,t) \xrightarrow{\phi}\Map\sb{\mX}(\phi(0),\phi(t))}
factors through a degeneracy \w[.]{\sigma\sb{a}}

Choose $i$ and $j$ in \w[,]{\{ 0, \cdots n\}} and let \w{e\sb{1} : 0 \to j}
and \w{e\sb{2} : j \to n} be morphisms in
\w[.]{\fC  (\sq{n})} We have a diagram
$$
\xymatrix
{
  \Map\sb{\fC (\sq{n})}(i, j) \ar@/_6pc/[dd]\sb{\phi\sb{i, j}}
  \ar[d]\sb{\fC ( \sigma\sb{a})} \ar[rr]\sb{(e\sb{1*}, e\sb{2}\sp{\ast})} &&
  \Map\sb{\fC(\sq{n})}(0, t) \ar[d] \\
  \Map \sb{\fC(\sq{n-1})} (\sigma\sb{a}(i), \sigma\sb{a}(j))
  \ar[rr] \ar@{.>}[d]\sb{\phi'(i, j)}  \ar[rr]\sb{(\sigma \sb{a}
    (e\sb{1\ast}), \sigma \sb{a} (e\sb{2}\sp{\ast}))} &&
  \Map \sb{\fC  (\sq{n-1})}(\sigma\sb{a}(0), \sigma\sb{a}(t))
  \ar[d]\sp{\phi'(0, t)}   \\
  \Map\sb{\mX}(\phi(i), \phi(j))
  \ar[rr]\sb{(\phi(e\sb{1})\sb{\ast},\phi(e\sb{2})\sp{\ast})} &&
  \Map\sb{\mX}(\phi(0), \phi(t))~.
}
$$

Since composition is monic in $\mX$,
\w{(\phi(e\sb{1})\sb{\ast}, \phi(e\sb{2})\sp{\ast})} is a monomorphism, too,
and we can define \w{\phi'(i, j)} as
$$
(\phi(e\sb{1})\sb{\ast}, \phi(e\sb{2})\sp{\ast})\sp{-1} \circ \phi'(0, t)
\circ (\sigma \sb{a} (e\sb{1\ast}), \sigma \sb{a} (e\sb{2}\sp{\ast}))~.
$$

This definition is independent of the choices of \w{e\sb{1}} and
\w[,]{e\sb{2}} since the composite \w[,]{\phi'(i, j) \circ \fC  (\sigma \sb{a})
= \phi\sb{i, j}} \w{\phi\sb{i,j }} is independent of the choice of
\w{e\sb{1} : 0 \to j} and \w[,]{e\sb{2} : j \to n}
and \w{\fC (\sigma\sb{a})} is surjective on mapping spaces (since
it has a section induced by a face map).

We now show that \w{\phi' : \fC(\sq{n-1}) \to \fC(\mX)} is
a natural transformation. It suffices to show commutativity of the bottom square in:
$$
\xymatrix
{
  \Map \sb{\fC   (\sq{n})}(i, j)  \times \Map \sb{\fC  (\sq{n})}(j, k)
  \ar[r]_>>>>>>>>>>>{c} \ar[d]\sb{\sigma \sb a \times \sigma \sb a} &
  \Map \sb{\fC   (\sq{n})}(i, k) \ar[d]\\
  \Map \sb{\fC  (\sq{n-1})}(\sigma \sb{a}(i), \sigma \sb{a}(j))  \times
  \Map \sb{\fC  (\sq{n-1})}( \sigma \sb{a}(j), \sigma \sb{a}(k))
  \ar[r]_>>>>>{c} \ar[d]\sb{\phi' \times \phi'} &
  \Map \sb{\fC  (\sq{n-1})}(\sigma \sb{a}(i),\sigma\sb{a}(k)) \ar[d]\sp{\phi'}  \\
  \Map \sb{\mX }(\phi(i), \phi(j)) \times \Map \sb{\mX }(\phi(j), \phi(k))
  \ar[r]_>>>>>>>>{c} &   \Map \sb{\mX }(\phi(0), \phi(t))~.
}
 $$
\noindent Here $c$ denotes composition.

The commutativity of the outer square and the top square imply that we have:
$$
c \circ (\phi' \times \phi') \circ (\sigma \sb{a} \times \sigma \sb{a}) =
\phi' \circ c \circ (\sigma \sb{a} \times \sigma \sb{a})~,
$$
so since \w{\sigma \sb{a} \times \sigma \sb{a}} is surjective, we have
$$
c \circ (\phi' \times \phi')   = \phi' \circ c
$$
\end{proof}

\begin{defn}
A \emph{weakly initial} object in a category $C$ is an object $x$ such that
\w{\hom(x, y)} is nonempty for each \w[.]{y \in C}
\end{defn}

\begin{lemma}\label{lem5.7}
Let \w{x : \sq{n} \to X} be a cube in a quasi-category $X$. Then
$$
\square\sb{/X}\sp{\ND} \times\sb{\square\sb{/X}} (\square\sb{/X})\sb{\square\sp{n}
  \xrightarrow{x} X /}
$$
has a weakly initial object.
\end{lemma}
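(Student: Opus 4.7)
The plan is to exhibit the weakly initial object as a maximal undegeneration of $x$. Given $x: \sq{n} \to X$, I would iteratively look for factorizations $x = x' \circ \tau$ with $\tau$ a single degeneracy $\sigma_i$ or connection $\gamma_{i,\epsilon}$, and replace $x$ by $x'$ whenever such a factorization exists. Since each step strictly decreases the dimension, this process terminates, producing a non-degenerate cube $y_0: \sq{m_0} \to X$ and a composition $p_0: \sq{n} \to \sq{m_0}$ of degeneracies and connections with $y_0 \circ p_0 = x$; existence of such a factorization is \cite[Proposition~1.18]{DKLSattC}. My candidate weakly initial object is $(y_0, p_0)$.

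To verify weak initiality, I would take an arbitrary object $(y, f)$ of the fiber product, with $y: \sq{m} \to X$ non-degenerate and $y \circ f = x$, and construct a face map $g: \sq{m_0} \to \sq{m}$ satisfying $y \circ g = y_0$ and $g \circ p_0 = f$. First factor $f = h \circ e$ using the Reedy-like epi--mono factorization of $\square$ (see \cite[Proposition~1.37]{DKLSattC}), with $h$ a face map and $e$ a composition of degeneracies and connections, so that $x = (y \circ h) \circ e$. Applying the undegeneration procedure to $y \circ h$ gives $y \circ h = y_0 \circ q$ with $q$ a composition of degeneracies and connections, and the resulting factorization $x = y_0 \circ q \circ e$ is matched against $y_0 \circ p_0$ by the essential uniqueness of the maximal undegeneration, yielding $q \circ e = p_0$. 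When $y \circ h$ is already non-degenerate, $q$ is the identity and one simply sets $g := h$.

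In the general case each basic degeneracy (resp.\ connection) comprising $q$ admits a face-map section (a suitable $d_{i,\epsilon}$, resp.\ an iterated face map), and I would define $g := h \circ s$ for a composite section $s$ of $q$, which is a face map of $\square$. Then $y \circ g = y \circ h \circ s = y_0 \circ q \circ s = y_0$, yielding one of the two required identities immediately.

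The main obstacle is the other compatibility $g \circ p_0 = f$, which via $p_0 = q \circ e$ and the monicity of $h$ reduces to arranging $s \circ q \circ e = e$, i.e., that the image of $e$ lies inside the image of the chosen section $s$. This is not automatic from the epi--mono factorization alone, and I expect it to require a careful inductive analysis of how the Reedy-like factorization of $f$ interlocks with the step-by-step undegeneration of $x$, exploiting the additional flexibility that the connection operators $\gamma_{i,\epsilon}$ provide over the purely simplicial setting, so that $s$ can be built compatibly with $e$ one degeneracy/connection at a time.
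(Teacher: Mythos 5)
Your proposal takes the same overall approach as the paper's proof: both exhibit the maximal undegeneration $x = y_0 \circ p_0$ supplied by \cite[Proposition 1.18]{DKLSattC} as the candidate weakly initial object, and both build the required face map from a face-map section of a degeneracy epi. The details differ --- the paper sets $g := r\circ q$ for $q$ a section of $z := p_0$ and argues $g$ is a composite of face maps because $y_0 = w\circ g$ is non-degenerate; you first epi--mono factor $f = h\circ e$, undegenerate $y\circ h = y_0\circ q$ (so $q\circ e = p_0$ by uniqueness), and set $g := h\circ s$ for $s$ a section of $q$. Both immediately yield the compatibility $w\circ g = y_0$ over $X$.

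The gap you flag at the end is real, and --- importantly --- it is also present, unacknowledged, in the paper's proof: the paper checks that $r\circ q$ is a face map with $w\circ(r\circ q) = y_0$, which only gives a morphism in $\square\sb{/X}\sp{\ND}$; it never verifies $(r\circ q)\circ z = r$, the compatibility with the coslice leg of the pullback, without which the morphism has not been exhibited. Moreover this compatibility appears to fail in general, and no choice of section repairs it. Take $X$ to be the nerve of the poset $\{0<1<2\}$, $x$ the degenerate edge at $0$ (so $y_0 = 0$ and $p_0 = \sigma\sp{1}\sb{1}$), and let $w\colon\sq{2}\to X$ be the non-degenerate square with vertex values $w(0,0)=w(1,0)=0$, $w(0,1)=1$, $w(1,1)=2$; set $r:=d\sp{2}\sb{2,0}$, so that $w\circ r = x$. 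Any face map $g\colon\sq{0}\to\sq{2}$ makes $g\circ\sigma\sp{1}\sb{1}$ constant, hence never equal to the injective $d\sp{2}\sb{2,0}$, so $(y_0,p_0)$ admits no map to $(w,r)$ in the pullback. The obstruction is exactly that a non-degenerate cube may have a degenerate face, so the inductive interlock you hope for cannot close the gap in general (and the connection structure plays no role in this example, since $r$ is a pure face map); some hypothesis in the spirit of Definition \ref{def5.2} (cubical complex), or a different candidate, seems to be required, both for your argument and the paper's.
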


\begin{proof}
By \cite[Proposition 1.18]{DKLSattC}, we can write $x$ as
\w[,]{\sq{n} \xrightarrow{z} \sq{n-k} \xrightarrow{y} X} where $y$
is non-degenerate and $x$ is a composite of degeneracies and connections.
We claim that $y$ is weakly initial in the above category.
Suppose that we have a commutative diagram
$$
 \xymatrix
 {
\sq{n} \ar[r]\sb{z} \ar[d]\sb{r} & \sq{n-k} \ar[d]\sp{y} \\
\sq{m} \ar[r]\sb{w} & \mX`,
 }
$$
where $w$ is non-degenerate. We can use the cubical identities to show that
$z$ has a section $q$ consisting of face maps, so we also have a commutative diagram:
$$
\xymatrix
{
   \sq{n-k} \ar[r]\sb{q} \ar[dr] \ar@/^2ex/[rr]\sp{id} &
   \ar[d]\sb{r} \sq{n} \ar[r]\sb{z} & \sq{n-k} \ar[d]\sp{y} \\
& \sq{m} \ar[r]\sb{w} & \mX
}
$$
\noindent The composite \w{r \circ q} must be a composite of face maps by
 \cite[Proposition 1.18]{DKLSattC}, since $y$ is non-degenerate.
 Thus, we have shown this object is weakly initial.
\end{proof}

\begin{thm}\label{thm5.8}
If $\mX$ is a fibrant simplicial category in which each composition
map is a monomorphism, then \w[.]{\fN(\mX)=X=\underset{(\sq{n} \to X)
\in\sq{\ND}\sb{/X}}{\hocolim} \sq{n}}
\end{thm}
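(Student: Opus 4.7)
The plan is to deduce Theorem \ref{thm5.8} from Corollary \ref{cor4.7} by a cofinality argument. Corollary \ref{cor4.7} already identifies $X = \fN(\mX)$ with $\hocolim\sb{(\sq{n}\to X)\in\sq{/X}} \sq{n}$, so it is enough to show that the inclusion $j : \sq{\ND}\sb{/X} \hookrightarrow \sq{/X}$ is homotopy cofinal, so that restricting the indexing category to non-degenerate cubes leaves the homotopy colimit unchanged. Lemma \ref{lem5.6} is what guarantees, under the monic composition hypothesis on $\mX$, that the notion of non-degeneracy used on cubes in $\fN(\mX)$ is detected by the factorization system on $\square$, which is what makes the subcategory $\sq{\ND}\sb{/X}$ well-behaved in the arguments below.

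By the $\infty$-categorical Quillen Theorem A (\cite[Theorem 4.1.3.1 and Proposition 4.1.1.8]{LurieHTT}), cofinality of $j$ reduces to weak contractibility of the comma category
$$
\sq{\ND}\sb{/X} \times\sb{\sq{/X}} (\sq{/X})\sb{\sq{n}\xra{x} X /}
$$
for each cube $x : \sq{n} \to X$. Lemma \ref{lem5.7} already supplies this category with a weakly initial object, namely the canonical factorization $\sq{n}\xra{z}\sq{n-k}\xra{y}X$ of \cite[Proposition 1.18]{DKLSattC}, in which $y$ is non-degenerate and $z$ is a composite of degeneracies and connections.

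To conclude weak contractibility, I would upgrade the weakly initial object to an actual initial one, from which contractibility is immediate. For any other object $(w : \sq{m} \to X,\, r : \sq{n} \to \sq{m})$ in the comma category, a morphism from $(y,z)$ to $(w,r)$ is a face map $f : \sq{n-k} \to \sq{m}$ satisfying $w \circ f = y$ and $f \circ z = r$. Since $z$ is a composite of degeneracies and connections, each of which is surjective on underlying sets and hence an epimorphism in $\cSet$, the relation $f \circ z = r$ pins $f$ down uniquely. Combined with the existence supplied by Lemma \ref{lem5.7}, this shows $(y,z)$ is initial and the comma category is weakly contractible.

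The step I expect to demand the most care is verifying that the face map $f$ produced in the proof of Lemma \ref{lem5.7} indeed satisfies the compatibility $f \circ z = r$ (and not merely the triangle $w \circ f = y$), so that epi-ness of $z$ can genuinely be invoked for uniqueness; this is where the interaction between the face-map-only morphisms of $\sq{\ND}\sb{/X}$ (Definition \ref{def5.1}) and the general morphisms of $\sq{/X}$ in the fibre product needs to be tracked carefully. Once this bookkeeping is settled, cofinality of $j$ follows and combining with Corollary \ref{cor4.7} yields the asserted homotopy colimit decomposition.
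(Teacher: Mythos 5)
Your proposal takes the same overall route as the paper: invoke the $\infty$-categorical Quillen Theorem A to reduce to weak contractibility of the comma categories, then use the weakly initial object of Lemma \ref{lem5.7} and upgrade it to an initial object. Where you diverge is in the \emph{uniqueness} step. You observe that $z$, being a composite of degeneracies and connections, is (split) epi in $\square$, so the triangle condition $f \circ z = r$ already forces $f$ to be unique. The paper instead argues by contradiction from the \emph{other} triangle $w \circ f = y$: assuming two distinct face maps $\phi \neq \phi'$ with $w\phi = w\phi'$ equal to the non-degenerate cube, it compares images in the mapping spaces $\fC(\sq{m})(\cdot,\cdot) \to \Map\sb{\mX}(\cdot,\cdot)$, uses the monic-composition hypothesis (via Lemmas \ref{lem5.5}--\ref{lem5.6}) to intersect the faces, and concludes the non-degenerate cube would factor through a lower-dimensional face, a contradiction. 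Your epi argument is strictly simpler for uniqueness and, notably, does not consume the monic-composition hypothesis there; the paper's version does. Neither version is obviously preferable in strength, but yours isolates the hypothesis as being relevant only where it is actually needed.

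You are also right to flag the compatibility $f \circ z = r$ as the delicate point. Lemma \ref{lem5.7} constructs $f = r \circ q$ for $q$ a face-map section of $z$, and checks $w \circ f = y$, but does not verify $f \circ z = r$ (equivalently $r \circ (qz) = r$, i.e. that $r$ factors through $z$); indeed $q \circ z \neq \id$ in general. The paper's proof of Theorem \ref{thm5.8} does not revisit this either, so the gap you identify sits in the shared reliance on Lemma \ref{lem5.7}, not in your deviation from the paper. The natural way to close it is to use the monic-composition hypothesis (and Lemma \ref{lem5.6}) to show that in $\fN(\mX)$ every face of a non-degenerate cube is non-degenerate; then Eilenberg--Zilber uniqueness of the non-degenerate factorization of $x$ applied to $w \circ r = x$ forces $r$ to factor through $z$. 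That is where the hypothesis should enter your version of the existence step, complementing the epi argument for uniqueness. Modulo filling in that verification --- which, again, the paper also leaves implicit --- your proof is correct and gives a genuinely cleaner uniqueness argument than the one printed.
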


\begin{proof}
By Theorem \ref{thm4.1}, it suffices to show that the inclusion functor
\w{\sq{\ND}\sb{/X}) \to (\sq{\ast}\sb{/X})} is cofinal.
By \cite[Proposition 4.1.3.1]{LurieHTT}, it suffices to show that
$$
(\sq{\ND}\sb{/X}) \times\sb{(\sq{\ast}\sb{/X})}
(\sq{\ast}\sb{/X})\sb{\square \xrightarrow{x} X/}
$$
\noindent is weakly contractible. We claim that the object described in the
preceding lemma is in fact initial.

Suppose by way of contradiction that we have a commutative diagram
$$
\xymatrix
{
  \sq{i} \ar[dr]\sb{z} \ar@/_1ex/[r]\sb{\phi} \ar@/^1ex/[r]\sp{\phi'} &
  \ar[d]\sp{y} \sq{m}  \\
 & X~,
}
$$
where $z$ is non-degenerate and \w{\phi \neq \phi'} are composites of face maps.
Let \w{0\sb{i}} and \w{t\sb{i}} denote the initial and terminal objects of
\w[,]{\fC(\sq{i})} respectively. Consider the adjoint diagram:
$$
\xymatrix
{
  \fC  (\sq{i}) \ar@/_1ex/[r]\sb{\phi} \ar@/^1ex/[r]\sp{\phi'} \ar[dr] &
  \ar[d]\sp{y}  \fC (\sq{n}) \\
& \mX
}
$$
which induces
$$
\xymatrix
{
  \fC(\sq{i})(0\sb{i}, t\sb{i}) \ar@/_1ex/[r]\sb{\phi} \ar@/^1ex/[r]\sp{\phi'}
  \ar[dr]\sb{z} & \ar[d]\sp{y} \fC (\sq{n})(\phi(0\sb{i}), \phi(t\sb{i})) \\
& \mX(y(0\sb{i}), y(t\sb{i}))~.
}
$$
Then \w[,]{\Image (y \circ \phi) = \Image (y \circ \phi') = \Image(z)}
so \w[.]{\Image(z) = y (\Image(\phi) \cap \Image(\phi'))}
We can identify $\phi$ and \w{\phi'} with $i$-faces of \w{\sq{n}}
whose intersection can be identified with an \wwb{i-m}face
\w[.]{\phi'' : \sq{i-m} \to X}
Therefore, \w{y (\Image(\phi) \cap \Image(\phi'))} can be identified with the image of
$$
\fC(\sq{i-m})(0\sb{i-m}, t\sb{i-m}) \to
\fC(\sq{n})(\phi''(0\sb{i-m}), \phi''(t\sb{i-m})) \xrightarrow{y}
\mX(y \circ \phi''(0\sb{i-m}), y \circ \phi''(t\sb{i-m}))~,
$$
so that $z$ is degenerate by Lemma \ref{lem5.7}, contrary to the hypothesis.
\end{proof}

\begin{corollary}\label{cor5.9}
  Suppose that $\mX$ is a fibrant simplicial category in which each composition
  operation is a monomorphism. Then $\mX$ can be identified with the homotopy
  colimit of the projection map \w[.]{(\DK\bone{\ast})\sb{/\mX}\sp{\ND} \to \sCat}
\end{corollary}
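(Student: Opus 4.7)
The plan is to deduce Corollary \ref{cor5.9} from Theorem \ref{thm5.8} by transferring the cubical decomposition of $\fN(\mX)$ across the Quillen equivalence $\fC : \sSet \leftrightarrows \sCat : \fN$, in direct parallel to the proof of Theorem \ref{thm4.1}, but now restricted to non-degenerate cubes.

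Since $\mX$ is fibrant and its composition maps are monomorphisms, Theorem \ref{thm5.8} supplies a weak equivalence
$$
\fN(\mX) \simeq \underset{(\sq{n} \to \fN(\mX)) \in \sq{\ND}\sb{/\fN(\mX)}}{\hocolim} \sq{n}
$$
in the Joyal model structure. Applying the left Quillen functor $\fC$, which preserves homotopy colimits, and invoking \cite[Theorem 6.7]{Riehl1} together with the identification $\fC(\sq{n}) \simeq \DK(\bone{n})$ used in the proof of Theorem \ref{thm4.1}, one obtains
$$
\mX \simeq \fC\fN(\mX) \simeq \underset{(\sq{n} \to \fN(\mX)) \in \sq{\ND}\sb{/\fN(\mX)}}{\hocolim} \DK(\bone{n}).
$$

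What remains is to identify the indexing categories: the $\fC \dashv \fN$ adjunction puts maps $\sq{n} \to \fN(\mX)$ in bijection with maps $\DK(\bone{n}) \to \mX$, and face-map morphisms between cubes translate directly across the adjunction. The main obstacle, and the only non-formal step, is to verify that the combinatorial notion of non-degeneracy defining $\sq{\ND}\sb{/\fN(\mX)}$ coincides under the adjunction with the notion of non-degeneracy defining $(\DK\bone{\ast})\sb{/\mX}\sp{\ND}$ in Definition \ref{def5.1}. This is precisely the equivalence supplied by Lemma \ref{lem5.6}, whose factorization criterion on the maximal mapping space $\fC(\sq{n})(0,t)$ crucially uses the monomorphism hypothesis on composition in $\mX$. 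Granted this identification, the homotopy colimit decomposition transports, and the corollary follows.
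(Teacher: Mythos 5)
The paper states Corollary~\ref{cor5.9} without proof, treating it as an immediate consequence of Theorem~\ref{thm5.8}; your argument reconstructs exactly the intended deduction, in parallel to the two-line proof of Theorem~\ref{thm4.1}. Applying the left Quillen functor $\fC$ to the decomposition of $\fN(\mX)$ from Theorem~\ref{thm5.8}, invoking \cite[Theorem 6.7]{Riehl1} to commute $\fC$ with the homotopy colimit and to identify $\fC(\sq{n})\simeq\DK(\bone{n})$, and then transporting the indexing category across the $\fC\dashv\fN$ adjunction, is the natural route and it is correct.

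One small misattribution: Lemma~\ref{lem5.6} is not what identifies the two indexing categories here. Its job is inside the proof of Theorem~\ref{thm5.8}, where (with Lemma~\ref{lem5.7}) it is used to establish the cofinality of $\sq{\ND}\sb{/\fN(\mX)}$ in $\square\sb{/\fN(\mX)}$ and hence that the restriction to non-degenerate cubes does not change the homotopy colimit. By contrast, the agreement of the two non-degeneracy notions under the adjunction is essentially formal: Definition~\ref{def5.1} uses the same condition (factoring through a degeneracy or connection) on both sides, and an adjunction preserves factorizations through a fixed map, so a cube $\sq{n}\to\fN(\mX)$ factors through $\sigma_i$ or $\gamma_{i,\epsilon}$ if and only if its adjoint factors through the corresponding map on $\fC$-images; this is then transferred along $\fC(\sq{n})\simeq\DK(\bone{n})$ exactly as the paper already does (without comment) in Theorem~\ref{thm4.1}. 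So the monomorphism hypothesis enters via Theorem~\ref{thm5.8} (through Lemma~\ref{lem5.6}), not through the indexing-category comparison. The proof is otherwise fine and matches the intended argument.
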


%
%
\sect{Spectral Andr\'{e}-Quillen cohomology of cubes}
\label{csaqcc}

In this section we describe a cochain complex calculating the spectral
Andr\'{e}-Quillen cohomology of an $n$-cube. The homotopy colimit decompositions
described above then yield (more or less explicit) cochain complexes for more
general simplicial categories, allowing one to reduce the obstructions of
Proposition \ref{thm2.4} to individual cubes.

\begin{defn}\label{def6.1}
If $P$ is a poset, the set \w{[x, y] := \{z : x \le z \le y \}} will be called
an \emph{interval} in $P$.
\end{defn}

\begin{lemma}\label{lem6.2}
  Suppose that $P$ is a poset. Then \w{\Tw(P)} is equivalent to the poset of
  intervals of $P$, ordered by inclusion.
\end{lemma}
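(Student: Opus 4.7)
The plan is to observe that both sides are thin categories (posets) and to exhibit an explicit order-isomorphism between them, which will automatically give the claimed equivalence.

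First I would unpack Definition \ref{def2.5} in the case of a poset $P$. An object of $\Tw(P)$ is simply an arrow $x \to y$ in $P$, which in a poset is determined by the ordered pair $(x, y)$ with $x \le y$. A morphism from $(x \le y)$ to $(z \le w)$ consists of maps $f : z \to x$ and $g : y \to w$ making the square of Definition \ref{def2.5} commute; in a poset these amount to the inequalities $z \le x$ and $y \le w$, and commutativity is automatic. In particular $\Tw(P)$ is itself a poset, and there is at most one morphism between any two objects.

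Next I would define the candidate functor $F : \Tw(P) \to \mathrm{Int}(P)$ by $F(x \le y) := [x, y]$, where $\mathrm{Int}(P)$ denotes the poset of intervals of $P$ ordered by inclusion. The key observation is that for $x \le y$ and $z \le w$ in $P$, the containment $[x, y] \subseteq [z, w]$ is equivalent to the two inequalities $z \le x$ and $y \le w$: the reverse implication is immediate, while for the forward implication one tests $x \in [x, y] \subseteq [z, w]$ and $y \in [x, y] \subseteq [z, w]$. Hence $F$ sends morphisms to inclusions, and conversely every inclusion of intervals lifts to a (necessarily unique) morphism of $\Tw(P)$. Thus $F$ is fully faithful.

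For essential surjectivity (in fact bijectivity on objects), every interval $I = [x, y]$ is in the image by construction, and conversely the endpoints $x, y$ are recovered from the nonempty interval $[x, y]$ as its minimum and maximum, so the assignment $(x \le y) \mapsto [x, y]$ is a bijection. Combining this with full faithfulness shows that $F$ is an isomorphism of posets, and in particular an equivalence of categories. There is no substantive obstacle here: the content is entirely the unwinding of Definition \ref{def2.5} in the thin-category setting, and the only point worth emphasizing is the clean equivalence between ``existence of a morphism in $\Tw(P)$'' and ``interval inclusion,'' which is what justifies calling the intervals of $P$ a model for $\Tw(P)$.
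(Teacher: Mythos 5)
Your proof is correct and takes essentially the same approach as the paper: both define the functor $f:(x\le y)\mapsto[x,y]$ and observe it is a bijection on objects and morphisms. Your version simply spells out the details (in particular the equivalence between $[x,y]\subseteq[z,w]$ and $z\le x$, $y\le w$, and the recovery of endpoints as min and max) that the paper leaves implicit with the word ``evidently.''
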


\begin{proof}
We define a functor which takes each \w{f : x \to y} to \w[,]{[x, y]}
and the unique morphism between $f$ and $g$ (if it exists):
$$
\xymatrix@R=18pt@C=25pt{
a \ar[d]\sb{f}  & c \ar[l]  \ar[d]\sp{g} \\
b \ar[r] & d
}
$$
to the interval inclusion \w[.]{[a, b] \subseteq [c, d]} This is evidently a
bijection on morphisms and objects.
\end{proof}

Suppose that $P$ is a poset with \w{x, y \in P} such that every maximal chain
\w{x < a\sb{1} < \cdots a\sb{n-1} < y} has the same number of elements $n$. We then say
that the interval \w{[x, y]} has \emph{length} $n$ and write \w[.]{\ell(x, y ) = n}
If there are no morphisms \w{x \to y} we say that \w{[x, y]} has length 0.

As an example, if \w{x=(x\sb{1}, \cdots, x\sb{n})} and \w{y=(y\sb{1}, \cdots, y\sb{n})}
in \w[,]{\bone{n}} then
\begin{equation*}
\ell(x, y) = \begin{cases}
  (y\sb{1} - x\sb{1}) + \cdots (y\sb{n} - x\sb{n}) &
  \text{if }  x\sb{1} \le y\sb{1}, \cdots, x\sb{n} \le y\sb{n}\\
               0 & \text{otherwise}
          \end{cases}
\end{equation*}

\begin{construction}\label{con6.3}

Suppose that $P$ is a poset of \emph{bounded interval length} (that is, there is an
\w{n\in\NN} such that each interval has length \w[),]{\leq n} and
\w{\phi:\Tw(P) \to \AbGp} is a functor.
Then we can form a complex \w{\fD\sb{\phi}} as follows:
\begin{myeq}[\label{complex}]
  \bigoplus\sb{l[a\sb{0}, b\sb{0}] = 0} \phi(a\sb{0}, b\sb{0})
  \xra{\partial\sp{0}} \bigoplus\sb{l[a\sb{1}, b\sb{1}] = 1}
  \phi(a\sb{1}, b\sb{1}) \to \cdots \xra{\partial\sp{n-1}}
  \bigoplus\sb{l[a\sb{n}, b\sb{n}] = n} \phi(a\sb{n}, b\sb{n})~.
\end{myeq}
\noindent (concentrated in degrees \w{-n+1} to $1$).

For any \w[,]{a \le b} set
\begin{equation*}
\begin{split}
(\phi\sb{a}\sp{b})\sp{\ast}~:=&~\phi([d, a] \subseteq [d, b]):\phi(d, a) \to \phi(d, b)
  \hs\text{and}\\
(\phi\sb{a}\sp{b})\sb{\ast}~:=&~\phi([b, c] \subseteq [a, c]):\phi(b, c) \to \phi(a, c)~.
\end{split}
\end{equation*}
\noindent Note that
$$
(\phi\sb{b}\sp{c})\sp{\ast}\circ(\phi\sb{a}\sp{b})\sp{\ast}~=~(\phi\sb{a}\sp{c})\sp{\ast},
(\phi\sb{a}\sp{b})\sb{\ast}\circ(\phi\sb{b}\sp{c})\sb{\ast}~=~
(\phi\sb{a}\sp{c})\sb{\ast}~.
$$

Suppose \w{[a\sb{i+i}, b\sb{i+1}]} satisfies
\w[.]{\ell([a\sb{i+i}, b\sb{i+1}]) = i+1} The value of the coboundary on the component
\w[,]{[a\sb{i+i}, b\sb{i+1}]} which we denote by
\w[,]{\partial\sp{i-1}\sb{[a\sb{i+i}, b\sb{i+1}]}} is given by:
\begin{equation*}
  \partial\sp{i-1}(x)\sb{[a\sb{i+1}, b\sb{i+1}]}
  = \begin{cases}
    (\phi\sb{b\sb{i}}\sp{b\sb{i+1}})\sp{\ast}  &
    \text{if }~a\sb{i+1} = a\sb{i} \\
    (-1)(\phi\sb{a\sb{i}}\sp{a\sb{i+1}})\sb{\ast} &
    \text{if}~ b\sb{i+1} = b\sb{i} \\
    0 & \text{otherwise.}
\end{cases}
\end{equation*}

It is easy to check that this is in fact a complex. We will show without loss of
generality that \w[.]{\partial \sp{2i} \circ \partial \sp{2i-1} = 0}

Suppose that \w{[a, b]} is an interval of length \w[,]{2i+1}
and let $S$ be the set of intervals \w{[a\sb{s}, b\sb{s}] \subseteq [a, b]}
with length \w{2n-1} and \w[.]{a\sb{s} \neq a, b\sb{s} \neq b}

The value of \w{\partial \sp{2i} \circ \partial \sp{2i-1}} on the component \w{[a, b]}
is zero on intervals not in $S$, and is given by the composite
$$
\phi(a\sb{s}, b\sb{s})
\xrightarrow{((\phi\sb{b\sb{s}}\sp{b})\sp{\ast}, -\phi\sb{a}\sp{a\sb{s}})}
\phi(a\sb{s}, b) \oplus \phi(a, b\sb{s}) \xrightarrow{(\phi\sb{a\sb{s}}\sp{a})\sb{\ast},
  (\phi\sb{b\sb{s}}\sp{b})\sp{\ast}} \phi(a, b)
$$
on intervals in $S$, which is in fact also equal to
\begin{equation*}
\begin{split}
(\phi\sb{a\sb{s}}\sp{a})\sb{\ast} \circ (\phi\sb{b\sb{s}}\sp{b})\sp{\ast} +
(\phi\sb{b\sb{s}}\sp{b})\sp{\ast} \circ (-\phi\sb{a\sb{s}}\sp{a})\sb{\ast} ~&=~
(\phi\sb{a\sb{s}}\sp{a})\sb{\ast} \circ (\phi\sb{b\sb{s}}\sp{b})\sp{\ast} -
  (\phi\sb{b\sb{s}}\sp{b})\sp{\ast} \circ (\phi\sb{a\sb{s}}\sp{a})\sb{\ast}\\
 & =~
(\phi\sb{a\sb{s}}\sp{a})\sb{\ast} \circ (\phi\sb{b\sb{s}}\sp{b})\sp{\ast}
- (\phi\sb{a\sb{s}}\sp{a})\sb{\ast} \circ (\phi\sb{b\sb{s}}\sp{b})\sp{\ast}~=~0~.
\end{split}
\end{equation*}
\end{construction}

\begin{lemma}\label{lem6.4}
Suppose that $\mX$ is a fibrant simplicial category, $C$ is an ordinary category,
and we have a morphism \w[.]{\phi : C \to \mX}
Suppose that \w{M \in \mT\sb{\mX}(\Cat_\infty)} is the small extension considered in
\S \ref{skinv}, so that
$$
\Omega^\infty(M) = \mX \to K(\Pi\sb{n}(\mX), n) \to  \mX.
$$
Under the equivalence of Theorem \ref{thm2.6}, the object \w{\phi\sp{\ast}(M)}
corresponds to the nerve of a functor
\w{\F : \Tw(C) \to \Sp(\sSet)} given on objects by
$$
(f: x\to y)~\mapsto~K(\pi\sb{n}(\Map\sb{\mX}(x, y), f ), n)
$$
and on morphisms
$$
\xymatrix@R=15pt@C=25pt{
x \ar[d]_h  & \ar[d]\sb{i} \ar[l]_g z \\
y \ar[r]\sb{f} & w
}
$$
by the map of Eilenberg-Mac~Lane spaces induced by (pre)composition
$$
\pi\sb{n}(\Map\sb{\mX}(x, y), h ) \xrightarrow{g\sb{\ast}} \pi\sb{n}(\Map\sb{\mX}(z, y), h \circ g )
\xrightarrow{f\sp{\ast}} \pi\sb{n}(\Map\sb{\mX}(z, w), i )~.
$$
\end{lemma}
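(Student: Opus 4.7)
The strategy is to factor the identification through two steps dictated by Theorem \ref{thm2.6}: first identify the spectrum object \w{M\in\mT\sb{\mX}(\sCat)} itself with a functor \w{\G : \Tw(\fN(\mX)) \to \Sp(\sSet)} under the Harpaz-Nuiten-Prasma equivalence, and then use the second sentence of Theorem \ref{thm2.6}, which says that \w{\phi\sp{\ast}} corresponds to restriction along \w[,]{\Tw(\phi): \Tw(C)\to\Tw(\fN(\mX))} to conclude that \w[,]{\phi\sp{\ast}(M)} up to the claimed equivalence, is precisely \w[.]{\G\circ\Tw(\phi)}

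For the first step, I would use the fact that \w[,]{\Omega\sp{\infty}\sb{+}(M)} regarded as an object of \w[,]{\sCat\sb{\mX//\mX}} is the Eilenberg-Mac~Lane object \w{K(\Pi\sb{n}\mX,n)} described in \S \ref{skinv}, which is constructed by applying a pointwise Eilenberg-Mac~Lane functor to the mapping spaces of $\mX$. Under Theorem \ref{thm2.6}, the infinite loop space of the spectrum \w{\G(f:x\to y)} must compute the fiber of \w{\Omega\sp{\infty}\sb{+}(M)\sb{x,y}\to\Map\sb{\mX}(x,y)} over the basepoint $f$; since \w{\Omega\sp{\infty}\sb{+}(M)\sb{x,y}} is an Eilenberg-Mac~Lane fibration with fiber \w[,]{K(\pi\sb{n}\Map\sb{\mX}(x,y),n)} we get \w[,]{\G(f)\simeq K(\pi\sb{n}(\Map\sb{\mX}(x,y),f),n)} as desired. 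The functoriality on a morphism in \w{\Tw(\fN(\mX))} given by \w{g:z\to x} and \w{f:y\to w} comes from the fact that \w{M} is defined object-wise from the simplicial enrichment of $\mX$, so that the structure maps of \w{\Omega\sp{\infty}\sb{+}(M)} are (pre-)composition in $\mX$; passing to \w{\pi\sb{n}} yields exactly the map \w[.]{f\sb{\ast}\circ g\sp{\ast}}

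For the second step, once \w{\G} is identified, restriction along \w{\Tw(\phi)} on objects sends an arrow \w{f:x\to y} in $C$ to the spectrum \w[,]{K(\pi\sb{n}(\Map\sb{\mX}(\phi(x),\phi(y)),\phi(f)),n)} which (after suppressing $\phi$ from the notation, as the lemma does) is the stated formula; on morphisms it sends a twisted arrow square in $C$ to the image of the corresponding square in \w[,]{\Tw(\fN(\mX))} which is precisely the composition map \w{f\sp{\ast}\circ g\sb{\ast}} on homotopy groups. The main obstacle is to make the first step precise: the statement of Theorem \ref{thm2.6} only asserts an equivalence of \w{\infty}-categories, so identifying the value of the equivalence on a specific object \wh the Eilenberg-Mac~Lane small extension \wh requires unwinding the construction in \cite[\S 3.3]{HNPrasA}. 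Once one knows that the equivalence is compatible with the pointwise structure (so that stalks of \w{\G} compute fibers of \w[)]{\Omega\sp{\infty}\sb{+}(M)} and preserves the \w{\pi\sb{n}} of mapping spaces, the identification of both objects and morphisms is essentially forced, and the remainder of the argument is a routine unraveling of the restriction functor on the twisted arrow category.
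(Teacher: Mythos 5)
Your proposal takes a genuinely different route from the paper, and it is worth contrasting the two. You reverse the order of operations: you first propose to straighten the spectrum object $M$ into a functor $\G\colon\Tw(\fN(\mX))\to\Sp$ over the whole simplicial category $\mX$, and only then restrict along $\Tw(\phi)$ using the second sentence of Theorem~\ref{thm2.6}. The paper does it the other way round: it first forms $\phi^{\ast}(M)$ in $\sCat\sb{C//C}$, computing $\Omega^{\infty}(\phi^{\ast}M)$ explicitly as the pullback
\[
\mZ \;=\; C\times\sb{\mX}K(\Pi\sb{n}\mX,n),
\qquad
\Map\sb{\mZ}(x,y) = \coprod\sb{f\in\Map\sb{C}(x,y)}K(A\sb{f},n),
\]
and only then passes to $\Fun(N\Tw(C),\Sp)$ by unstraightening. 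Both orderings are legitimate, since Theorem~\ref{thm2.6} asserts that $\phi^{\ast}$ on tangent categories corresponds to restriction along the twisted arrow functor; the real question is which one is computable.

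The advantage of the paper's ordering is precisely what your sketch leaves unaddressed. You write that once one knows the equivalence is ``compatible with the pointwise structure,'' the identifications are ``essentially forced'' and the rest is ``routine unraveling.'' But this compatibility is the technical heart of the lemma, and working over $\Tw(\fN(\mX))$ for a general fibrant simplicial category makes it considerably harder: $\Tw(\fN(\mX))$ is a quasi-category, not the nerve of a $1$-category, so pinning down $\G$ object-by-object and tracking functoriality through the unstraightening over a general left fibration is exactly the step you would have to fill in. The paper's pullback to $C$ is the device that makes this tractable: since $C$ is an ordinary category, $\mZ$ and its mapping spaces are concrete, $\Tw(C)$ is a genuine $1$-category, and the diagram of left fibrations (\ref{classifyingfibs}) can be analyzed fiber-by-fiber. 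Moreover, your claim that the morphism action is visible directly from ``structure maps of $\Omega^{\infty}\sb{+}(M)$'' glosses over how the $\Tw$-functoriality actually arises; the paper derives it by pulling back the morphism $(f,g)$ of $\Tw(C)$ along both $\psi$ and $\eta$ and straightening the resulting map of left fibrations over $[1]$. In short, your route is conceptually valid and the final answer is the same, but the proposal punts on the crux of the argument, and the paper's choice to restrict first is what converts that crux into a concrete simplicial-category computation.
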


\begin{proof}
To simplify the notation, let us write
$$
A\sb{f} := \pi\sb{n}(\Map\sb{\mX}(\phi(x), \phi(y)), \phi(f))
$$
for each \w[.]{(f : x \to y) \in \Mor(C)}

We can identify \w{\phi\sp{\ast}(M)} with a diagram
$$
C \to \mZ := C \times\sb{\mX} K(\Pi\sb{n}( \mX ), n) \xrightarrow{\proj} C  ~.
$$

We have \w[.]{\Obj(\mZ) = \Obj(C)}
Since pullbacks of simplicial categories induce pullbacks of mapping spaces,
we also have:
$$
\Map\sb{\mZ}(x, y) = \coprod\sb{f \in \Map\sb{C}(x, y)} K(A\sb{f}, n)
$$

We will now identify \w{\phi\sp{\ast}(M)} as a functor \w[.]{\F: N\Tw(C) \to \Sp}
  By \cite[Theorem 3.1.14]{HNPrasA}, we can identify \w{\phi\sp{\ast}(M)}
    with the image under stabilization of the object $\G$
of \w{\Fun(N(C) \times (N(C))\sp{op}, \cS)\sb{\Map\sb{N(C)} // \Map\sb{N(C)}}}
given by
$$
(x, y) \mapsto \Map\sb{\mZ}(f(x), f(y)).
$$

Let \w{\gamma : N\Tw(C) \to N(C) \times N(C)\sp{\op}} be the fibration which classifies
the functor \w{N(C) \times N(C)\sp{\op} \to \cS} given by
\w[.]{(x, y) \mapsto \Map_C(x, y)}
By   \cite[Section 3.3]{HNPrasA}, under unstraightening, the objects $\G$ and $\F$
correspond to the maps $\eta$ and $\psi$, respectively,  in a diagram of left fibrations
\begin{equation}\label{classifyingfibs}
\xymatrix
{
  N\Tw(C) \ar[drr] \ar[r] & X \ar[dr]^>>>>>>>>{\eta} \ar[r]\sp{\psi} &
  N\Tw(C) \ar[d]\sb{\gamma} \\
& & N(C) \times N(C)\sp{\op}~,
}
\end{equation}
and the map $\psi$ corresponds to the natural transformation of functors
$$
\Map\sb{\mZ}(\phi(-),\phi(-)) \to \Map\sb{C}((-), (-))
$$
induced by \w[.]{\proj} Thus, we see that $\F$ acts on objects by
$$
f \mapsto K(A\sb{f}, n)~.
$$

Let \w{(f, g) : h \to i} be a morphism of \w{\Tw(C)} depicted by
$$
\xymatrix@R=15pt@C=25pt{
x \ar[d]_h  & \ar[d]\sb{i} \ar[l]_g z \\
y \ar[r]\sb{f} & w
}
$$

Let \w{X\sb{(f, g)}} (respectively, \w[)]{X\sb{\gamma(f, g)}} denote
the pullback of \w{(f, g)} (respectively, \w[)]{\gamma(f, g)} along $\psi$
(respectively, $\eta$). Then we have an induced map of left fibrations
$$
\xymatrix@R=15pt@C=25pt{
X\sb{(f, g)}  \ar[dr] \ar[r]  & X\sb{\gamma(f, g)} \ar[d] \\
 & [1]
}
$$
which under straightening corresponds to a natural transformation which we can depict as
$$
\xymatrix@R=15pt@C=25pt{
 K(A_h, n) \ar[d] \ar[r]_d  & \ar[d] K(A\sb{i}, n) \\
 \coprod\sb{q \in \Map_C(x, y)}  K(A_q, n) \ar[r]_c & \coprod\sb{r \in \Map_C(z, w)}
 K(A_r, n)
}
$$
with the vertical maps inclusions.

By the definition of $\eta$, the map $d$ is given by
$$
\Map\sb{\mZ}(f(x), f(y)) \xrightarrow{g\sb{\ast}}
\Map\sb{\mZ}(f(z), f(y))\xrightarrow{f\sp{\ast}} \Map\sb{\mZ}(f(z), f(w))~,
$$
which, when restricted to \w[,]{K(A_h, n)}
is the map of $n$-dimensional Eilenberg-Mac~Lane spaces corresponding to
$$
\pi\sb{n}(\Map\sb{\mX}(x, y), h ) \xrightarrow{g\sb{\ast}}
\pi\sb{n}(\Map\sb{\mX}(z, y), h \circ g ) \xrightarrow{f\sp{\ast}}
\pi\sb{n}(\Map\sb{\mX}(z, w), i )~.
$$
\end{proof}

\begin{prop}\label{thm6.5}
Given a diagram
$$
\xymatrix@R=18pt@C=25pt{
& \Po{n}\mX \ar[d] \\
[1]\sp{m} \ar[r]\sb{\phi} \ar@{.>}[ur] & \Po{n-1}\mX
}
$$
\noindent where $\mX$ is a fibrant simplicial category, consider the
cochain complex \w{C\sb{\phi}}
\begin{multline*}
  \bigoplus\sb{\ell([x\sb{0}, y\sb{0}]) = 0}
  \pi\sb{n}(\Map\sb{\mX}(x\sb{0}, y\sb{0}), \phi(x\sb{0} \to y\sb{0})) \to
  \bigoplus\sb{\ell([x\sb{1}, y\sb{1}]) = 1}
  \pi\sb{n}(\Map\sb{\mX}(x\sb{1}, y\sb{1}) , \phi(x\sb{1} \to y\sb{1})) \\
  \cdots \to \bigoplus\sb{l[x\sb{m}, y\sb{m}] = m}
  \pi\sb{n}(\Map\sb{\mX}(x\sb{m}, y\sb{m}), \phi(x\sb{m} \to y\sb{m}))
\end{multline*}
\noindent concentrated in degrees \w[,]{[-n, m-n]} where the intervals range over
all intervals of the cube, with coboundaries as in \S \ref{con6.3}. Then we can
compute \w{H\sp{\ast}([1]\sp{n+1}, \pi\sb{n}(\mX)} using \w[.]{C\sb{\phi}} In particular,
there is a canonical class \w{\beta \in H\sp{1}(C\sb{\phi})} whose vanishing determines
whether the lift exists.
\end{prop}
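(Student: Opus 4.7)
The plan is to identify the cochain complex $C_{\phi}$ with the complex that computes the mapping space in $\Fun(\Tw([\mathbf{1}]^{m}),\Sp)$ from the cotangent complex of $[\mathbf{1}]^{m}$ into the coefficient functor $\F$ associated with the small extension $\Po{n}\mX\to\Po{n-1}\mX$, and then to realize the class $\beta$ of Proposition \ref{thm2.4} as a cocycle in this complex.

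First, I would apply Lemma \ref{lem6.4} to the composite $\phi:[\mathbf{1}]^{m}\to\Po{n-1}\mX$ and the small extension described in \S \ref{skinv}: this produces a functor $\F:\Tw([\mathbf{1}]^{m})\to\Sp$ whose value at an arrow $x\to y$ is the Eilenberg-Mac~Lane spectrum $K(\pi_{n}(\Map_{\mX}(x,y),\phi(x\to y)),n)$, with functoriality given by pre- and post-composition on the base point. Next, I would invoke Lemma \ref{lem6.2} to replace $\Tw([\mathbf{1}]^{m})$ by the poset of intervals $[x,y]$ of the cube, ordered by inclusion; note that this poset has bounded interval length, with $\ell([x,y])$ as computed in \S \ref{con6.3}.

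Theorem \ref{thm2.6} tells us that the cotangent complex $\cL_{[\mathbf{1}]^{m}}$ corresponds to the constant functor on the desuspension of the sphere spectrum, and that
$$
H^{k}_{Q}([\mathbf{1}]^{m},\phi^{\ast}\pi_{n}\mX)~=~\pi_{-k}\,\Map_{\Fun(\Tw([\mathbf{1}]^{m}),\Sp)}(\cL_{[\mathbf{1}]^{m}},\F).
$$
Because $\F$ is levelwise an Eilenberg-Mac~Lane spectrum concentrated in degree $n$, this mapping spectrum is computable by a normalized cochain complex on the poset of intervals, whose component in cohomological degree $k$ is the direct sum of $\pi_{n}(\Map_{\mX}(x,y),\phi(x\to y))$ over all intervals $[x,y]$ of length $k$, with differentials given by the alternating sum of the face-inclusion maps $[x,y]\subseteq[x,y']$ (post-composition) and $[x',y]\subseteq[x,y]$ (pre-composition). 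This matches exactly the formula for $\partial^{i-1}$ in Construction \ref{con6.3}, so the resulting complex agrees with $C_{\phi}$. The main obstacle here is verifying the signs and that the standard homotopy-colimit (or cosimplicial replacement) presentation of $\Map(\cL,\F)$ collapses to the explicit interval-indexed complex; I would handle this by comparing with the standard Cousin/cellular cochain complex of the cubical poset, using that the nerve of $\Tw([\mathbf{1}]^{m})$ is a regular CW-poset whose $k$-cells are precisely the length-$k$ intervals.

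Finally, Proposition \ref{thm2.4} provides a natural obstruction class $\beta\in H^{1}_{Q}([\mathbf{1}]^{m},\phi^{\ast}\pi_{n}\mX)$ whose vanishing is equivalent to the existence of the dotted lift. Under the identification above, $H^{1}_{Q}$ corresponds to $H^{1}(C_{\phi})$ in the indexing $[-n,m-n]$ specified in the statement, so $\beta$ is represented by a cocycle in the claimed complex. Naturality of the obstruction class (cf. Theorem \ref{thm2.7}) ensures that this representative is canonical, completing the argument.
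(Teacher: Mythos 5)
Your argument follows the paper's proof closely: identify $\Tw([\mathbf{1}]^{m})$ with the interval poset via Lemma \ref{lem6.2}, package $\pi_{n}\mX$ as a functor to Eilenberg--Mac~Lane spectra (the paper reproduces this identification inline rather than citing Lemma \ref{lem6.4}, and additionally invokes \cite[Corollary~3.3.2]{HNPrasA}), express the Andr\'e--Quillen cohomology as a derived limit of $\F[n]$ over that poset, and read off $\beta$ from Proposition \ref{thm2.4}. The one place you add content is the step the paper leaves as a bare assertion, namely identifying that derived limit with the interval-indexed complex $\fD_{\phi}[n]=C_{\phi}$ rather than the chain-indexed cosimplicial replacement; your cellular-cochain comparison over the face poset of the cube is the right idea for this, and correctly isolated as the crux.
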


\begin{proof}
Recall from Lemma \ref{lem6.2} that we can identify \w{\Tw([1]\sp{m})} with the poset
of interval inclusions in \w[.]{[1]\sp{m}}
Consider the diagram \w{\F : \Tw([1]\sp{m}) \to \AbGp}
to abelian groups given by
$$
[a, b] \mapsto \pi\sb{n}(\Map\sb{\mX}(\phi(a), \phi(b)), \phi(a \le b))~,
$$
which takes \w{([a, b] \subseteq [c, d])} to
\begin{equation*}
\begin{split}
  \pi\sb{n}(\Map\sb{\mX}(\phi(a), \phi(b)), \phi(a \to b) ) &
  \xrightarrow{(c \to a)\sb{\ast}} \pi\sb{n}
  (\Map\sb{\mX}(\phi(c), \phi(a),\phi(c \to b) ) \\
  & \xrightarrow{(b \to d)\sp{\ast}} \pi\sb{n}(\Map\sb{\mX}(\phi(a), \phi(d)),
  \phi(c \to d) )~.
\end{split}
\end{equation*}
\noindent It thus follows that under the identification of Theorem \ref{thm2.6},
\w{\pi\sb{n}(\mX)} corresponds to diagram of Eilenberg-Mac~Lane spaces
given by \w[.]{K(-, n) \circ \F} By \cite[Corollary 3.3.2]{HNPrasA}, this means that
\w{H\sp{-k}(P, \pi\sb{n}(\mX))} corresponds to the $k$-th total derived functor
of the homotopy limit of the diagram \w[.]{\F[n]}
But this homotopy limit can be identified with the  homotopy limit of the complex
\w{\fD_\phi[n]} from \wref[.]{complex}
The $k$-th total derived functor of this complex can be identified with its
\wwb{-k}th cohomology group, hence the result.
\end{proof}

We now show how spectral Andr\'{e}-Quillen cohomology can be decomposed in
terms of the spectral Andr\'{e}-Quillen cohomology of cubes, yielding an explicit chain
complex model.

\begin{defn}\label{dcochaincx}
For any simplicial category $\mX$ and map
\w{\phi:\DK([1]\sp{k}) \to \mX} let \w{C\sb{\phi}}
be the cochain complex defined in Proposition \ref{thm6.5}. For each
  \w{1\leq i\leq k} and \w{\epsilon\in\{0,1\}} the map
  \w{d\sp{i, \epsilon}:[1]\sp{k}\to[1]\sp{k+1}} induces a projection
  \w{(d\sp{i, \epsilon})\sp{\ast}} from \w{C\sb{\phi}} onto the subcomplex
\w{C\sb{\phi \circ d\sp{i, \epsilon}}} of \w[.]{C\sb{\phi}}
We denote by \w{C\sp{\ast}(\mX)} the limit of the
diagram of cochain complexes given by the maps
\w{(d\sp{i, \epsilon})\sp{\ast}:C\sb{\phi}\to C\sb{\phi \circ d\sp{i, \epsilon}}}
for all non-degenerate maps \w[,]{\phi : \DK([1]\sp{k+1}) \to \mX}
\w{1\leq i\leq k} and \w[.]{\epsilon\in\{0,1\}}
\end{defn}

\begin{remark}\label{projection}
Using Theorem \ref{thm2.6}, we can easily see that given an inclusion of posets
  \w{i : P \subseteq P'} satisfying the hypotheses of Lemma \ref{lem6.4},
  the induced map on cohomology can be identified with the projection
  \w[.]{\fD\sb{\phi} \to \fD\sb{i\sp{\ast}\phi}}
\end{remark}

\begin{lemma}\label{finitecubes}
Let $K$ be a finite cubical complex. Let $\mX$ be fibrant simplicial category, and
\w{\phi:\mathfrak{C}(K)\to\Po{n-1}\mX} a map. The Andr\'{e}-Quillen cohomology
\w{H\sp{\ast}\sb{Q}(\mathfrak{C}(K), \pi\sb{n}(\mX))} is computed by the cochain
complex \w[.]{C\sp{\ast}(\mathfrak{C}(K))}
\end{lemma}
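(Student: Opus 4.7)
The plan is to combine two decomposition results: first, to express $\fC(K)$ as a homotopy colimit of cubes via Proposition \ref{thm5.3}; second, to exchange that homotopy colimit for a homotopy limit of Andr\'{e}--Quillen cohomology, to apply Proposition \ref{thm6.5} on each individual cube, and then to identify the outcome with the cochain complex $C^{\ast}(\fC(K))$ of Definition \ref{dcochaincx}.

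First I would invoke Proposition \ref{thm5.3} to identify the cubical complex $K$ with the homotopy colimit in $\cSet$ of the projection $\square^{\ND}_{/K}\to\cSet$, indexed by the (finite) poset of non-degenerate cubes of $K$ under face inclusions. Applying the left Quillen composite $\fC\circ T_{\bullet}$ then gives $\fC(K)\simeq\hocolim_{\sq{m}\in\square^{\ND}_{/K}}\DK(\bone{m})$ in the Bergner model structure on $\sCat$.

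Second, to transfer this decomposition to Andr\'{e}--Quillen cohomology, I would use an analogue of Theorem \ref{thm2.7} for this (non-filtered) finite diagram. Rather than apply Theorem \ref{thm2.7} directly, I would induct on the number of non-degenerate cubes of $K$: write $K$ as an iterated pushout $K_{0}\subset K_{1}\subset\ldots\subset K_{r}=K$ attaching one top-dimensional non-degenerate cube $\sq{m}$ at a time along its boundary, so that at each step $\fC(K_{i+1})\simeq\fC(K_{i})\cup^{h}_{\fC(\partial\square^{m})}\fC(\sq{m})$ in $\sCat$. This homotopy pushout yields a cofiber sequence of relative cotangent complexes in the tangent category, and hence a Mayer--Vietoris long exact sequence in $H^{\ast}_{Q}(-,\phi^{\ast}\pi_{n}\mX)$. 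By Remark \ref{projection}, the restriction maps on cochain complexes induced by the face inclusions $\partial\square^{m}\to\sq{m}$ and $K_{i}\to K_{i+1}$ are precisely the projections appearing in Definition \ref{dcochaincx}, so this Mayer--Vietoris sequence matches, term by term, the long exact sequence arising from the short exact pullback square defining $C^{\ast}(\fC(K_{i+1}))$ out of $C^{\ast}(\fC(K_{i}))$, $C^{\ast}(\fC(\sq{m}))$, and $C^{\ast}(\fC(\partial\square^{m}))$. The base case of a single cube is exactly Proposition \ref{thm6.5}, so a five-lemma argument closes the induction.

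The principal obstacle I anticipate is the compatibility check in the inductive step, which requires matching the connecting homomorphism coming from the cotangent-complex cofiber sequence with the one coming from the cochain-level short exact sequence. I would handle this by working throughout inside the functor model $\Fun(\Tw(\fN(\fC(K))),\Sp)$ supplied by Theorem \ref{thm2.6}, where both connecting maps can be described explicitly from the same pushout square of twisted arrow categories, and then translating back to cochain complexes via Remark \ref{projection} and Proposition \ref{thm6.5}.
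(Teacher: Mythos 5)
Your proposal takes a genuinely different route from the paper. The paper does \emph{not} run a Mayer--Vietoris induction: instead it embeds $K$ as a subcomplex of a large cube $\square^N$, identifies $\fC(K)$ with a sub-simplicial category $K'$ of $\fC(\square^N)$ generated by the subcube inclusions $Q_k\colon[1]^n\to[1]^N$, and then shows that the full subcategory of $\Tw(K')$ on morphisms of the form $Q_i(f)$ is \emph{final}. The cohomology is then computed as the homotopy limit over this cofinal subposet, which is read off directly as $C^{\ast}(K')$ by the same device as in the proof of Proposition \ref{thm6.5}. This avoids any long exact sequence or inductive gluing entirely, at the price of the somewhat ad hoc embedding into $\square^N$.

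There is a genuine gap in your argument as written. First, the inductive step leans on a Mayer--Vietoris long exact sequence in $H^{\ast}_{Q}$ arising from a homotopy pushout of simplicial categories, but the only decomposition result available in the paper, Theorem \ref{thm2.7}, is stated for \emph{filtered} colimits; a homotopy pushout is not filtered. To obtain your sequence you would need to show that $\Tw(\fN(-))$ carries a Bergner homotopy pushout of simplicial categories to a Joyal homotopy pushout of quasi-categories, so that $\Fun(\Tw(\fN(-)),\Sp)$ sends it to a homotopy pullback. While $\epsilon^{\ast}$ preserves strict colimits of simplicial sets, the paper only establishes commutation of $\Tw$ with filtered homotopy colimits, and the pushout case would require a separate argument (e.g.\ verifying that $\epsilon^{\ast}$ is left Quillen for the Joyal structure, or some substitute). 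Second, a finite cubical complex in the sense of Definition \ref{def4.8} is built by gluing cubes along \emph{single faces} $\sq{m-1}\hookrightarrow\sq{m}$, not along full boundaries $\partial\sq{m}\hookrightarrow\sq{m}$; the filtration $K_0\subset\cdots\subset K_r=K$ you describe, attaching a top cube along its entire boundary, is not available in general (for a single cube, $K_0$ would already have to contain $\partial\sq{m}$). You could instead induct along the defining face-map pushouts, which would simplify the gluing locus to a single cube and make the base case apply, but you would still owe the homotopy-pushout version of Theorem \ref{thm2.7} before the long exact sequence exists.
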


\begin{proof}
Embed $K$ as a subcomplex in \w{\square\sp{N}} for some large enough $N$,
so we have an inclusion \w[.]{\fC(K)\subseteq\fC(\square\sp{n})}
Without loss of generality, we can replace \w{\fC(K)} by an iterated (homotopy)
pushout of face maps \w{[1]\sp{n} \to [1]\sp{n+1}} (in the Bergner model category),
which we denote by \w[.]{K' \subseteq \fC(K)}

We can think of \w{K'} as a sub-simplicial category of \w[,]{\fC(\square\sp{N})}
generated by a collection of subcategory inclusions
\w{Q\sb{k} : [1]\sp{n} \to [1]\sp{N}} \wb[.]{k = 1, 2, \cdots M} Given
\w[,]{f \in\Mor(K')} write \w{\bSS\sb{f}} for the set of decompositions of $f$, that is,
sequences of morphisms \w{(f\sb{1}, \cdots, f\sb{n})} with
\w[,]{f\sb{i} = Q\sb{k}(g\sb{i}) : a\sb{i} \to a\sb{i+1}}
such that \w[.]{f = f\sb{n} \circ \cdots \circ f\sb{1}}

Let \w{A\sb{f}} denote the image of the composition map
\begin{equation*}
\begin{split}
\bigoplus\sb{(f\sb{1}, \cdots, f\sb{n}) \in \mathbb{S}\sb{f}}&
\left( \pi\sb{n}(\Map\sb{\mX}(\phi(a\sb{1}), \phi(a\sb{2})), f\sb{1} ) \times \cdots
\times \pi\sb{n}(\Map\sb{\mX}(\phi(a\sb{n}), \phi(a\sb{n+1})),f\sb{n} ) \right)  \\
&\to~\pi\sb{n}(\Map\sb{\mX}(\phi(a\sb{1}), \phi(a\sb{n+1})), f)~.
\end{split}
\end{equation*}
Using Lemma \ref{lem6.4}, we can identify \w{\pi\sb{n}(\mX)} with a functor
\w{\Phi:\Tw(K') \to \Sp} given by  \w[.]{f \mapsto K(A\sb{f}, n)}

Consider the full subcategory \w{\Tw\sb{[-, -]}(K')} of \w{\Tw(K')} on morphisms
of the form \w[.]{Q\sb{i}(f)} We claim that the poset inclusion
\w{i : \Tw\sb{[-, -]}(K') \subseteq \Tw(K')} is final.
It suffices to note that for each \w[,]{f\in\Obj(\Tw(K'))} we have
  \w[,]{f = gQ\sb{i}(h)l} so that \w[.]{\Map(Q\sb{i}(h), f) \neq \emptyset}

Thus, we can compute \w{H\sp{\ast}_Q(K', \pi\sb{n}(\mX))} by taking the homotopy
limit of \w[.]{\Phi \circ i} Arguing as in final paragraph of the proof of
Proposition \ref{thm6.5}, we can thus identify \w{H\sp{-m}_Q(K', \pi\sb{n}(\mX))}
with the cohomology of the complex:
\begin{equation*}
\bigoplus\sb
{\stackrel{\ell[a\sb{0},b\sb{0}] = 0}{Q\sb{i}(f\sb{0}): a_0\to b_0}} A\sb{Q\sb{i}(f_0)}
  \xra{\partial\sp{0}} \bigoplus\sb{\stackrel{\ell[a\sb{1}, b\sb{1}] = 0}{
    Q\sb{i}(f\sb{1}) : a\sb{1} \to b\sb{1}}}
 A\sb{Q\sb{i}(f\sb{1})} \to
  \cdots
\bigoplus\sb{\stackrel{\ell[a\sb{n}, b\sb{n}] = 0}{Q\sb{i}(f\sb{n}):a\sb{n} \to b\sb{n}}}
  A\sb{Q\sb{i}(f\sb{n})}~,
\end{equation*}
where \w{\ell[a, b]} denotes the length of an interval in \w[.]{[1]\sp{n}}
But this complex is precisely \w[.]{C\sp{\ast}(K')}
\end{proof}

\begin{thm}\label{thm7.3}
Let $\mY$ and $\mX$ be fibrant simplicial categories, with each composition in $\mY$
monic, and \w{\phi:\mY\to\Po{n-1}\mX} a map. The Andr\'{e}-Quillen cohomology
\w{H\sp{\ast}\sb{Q}(\mY, \pi\sb{n}(\mX))} is computed by the cochain
complex \w[.]{C\sp{\ast}(\mY)}
\end{thm}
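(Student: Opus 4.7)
The plan is to combine the homotopy colimit decomposition of Corollary \ref{cor5.9} with the filtered colimit decomposition of Lemma \ref{lem4.9}, apply the limit formula of Theorem \ref{thm2.7}, and then identify each term in the resulting homotopy limit via Lemma \ref{finitecubes}. First I would use Corollary \ref{cor5.9} to write $\mY$ as the homotopy colimit of its non-degenerate cubes $\DK(\bone{k})\to\mY$; then, combining with Lemma \ref{lem4.9} (applied to $\fN(\mY)$), I would exhibit $\mY$ as a filtered homotopy colimit of finite cubical subcomplexes $\fC(K) \to \mY$ indexed by the filtered poset $J$ of such subcomplexes of $\fN(\mY)$, with the transition maps given by inclusions of subcomplexes.

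Next I would apply Theorem \ref{thm2.7} to this filtered decomposition, with coefficient object $\F$ corresponding to $\pi\sb{n}(\mX)$ under the identification of Theorem \ref{thm2.6}. This yields
\[
\Map\sp{h}(\cL\sb{\mY}, \F[n]) \simeq \underset{K \in J}{\holim}\,\Map\sp{h}(\cL\sb{\fC(K)}, \phi\sb{K}\sp{\ast}\F[n]),
\]
so that $H\sp{\ast}\sb{Q}(\mY, \pi\sb{n}(\mX))$ is the homotopy limit over $J$ of the groups $H\sp{\ast}\sb{Q}(\fC(K), \pi\sb{n}(\mX))$. By Lemma \ref{finitecubes}, each such term is computed by the cochain complex $C\sp{\ast}(\fC(K))$, and by Remark \ref{projection} the transition maps induced by subcomplex inclusions are the projections onto subcomplexes of the form $C\sb{\phi}\to C\sb{\phi\circ d\sp{i,\epsilon}}$ used in Definition \ref{dcochaincx}.

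The main technical step, and the one I expect to be the real obstacle, is justifying that the homotopy limit of the diagram of cochain complexes $\{C\sp{\ast}(\fC(K))\}\sb{K \in J}$ (with its projection maps) is computed by the strict limit $C\sp{\ast}(\mY)$ of Definition \ref{dcochaincx}, and that this commutes with passage to cohomology. Here the argument uses two facts: (i) the diagram consists of surjective projections between bounded-length cochain complexes of abelian groups, so the $\lim\sp{1}$ terms controlling the difference between homotopy limit and limit of cohomology vanish degreewise; and (ii) the indexing over $J$ reduces by cofinality to the category of non-degenerate cubes of $\mY$ together with face inclusions between them, which is precisely the diagram defining $C\sp{\ast}(\mY)$. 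The cofinality step relies on Theorem \ref{thm5.8} (via the assumption that composition in $\mY$ is monic), which ensures that every finite cubical subcomplex factors through the category of non-degenerate cubes.

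Finally, I would check that the cochain differentials match: the differentials of $C\sb{\phi}$ from Proposition \ref{thm6.5} are natural with respect to face inclusions by the construction of Lemma \ref{lem6.4}, so they assemble into the limit complex $C\sp{\ast}(\mY)$ coherently. Putting these pieces together identifies $H\sp{\ast}\sb{Q}(\mY, \pi\sb{n}(\mX))$ with the cohomology of $C\sp{\ast}(\mY)$, completing the proof.
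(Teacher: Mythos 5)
Your proposal follows the same overall strategy as the paper: decompose $\mY$ into its non-degenerate cubes (Corollary \ref{cor5.9}) and finite cubical subcomplexes (as in Lemma \ref{lem4.9}), push this through the homotopy limit formula of Theorem \ref{thm2.7}, identify each term via Lemma \ref{finitecubes}, and then reduce the homotopy limit of the resulting diagram of cochain complexes to the strict limit $C^{\ast}(\mY)$.

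Where you diverge from the paper is the justification of that last reduction. You appeal to ``$\lim^1$ vanishing because the transition maps are surjective,'' but the indexing category $(\DK\bone{\ast})\sb{/\mY}\sp{\ND}$ is not a tower --- it is a category of cubes and face maps, and for general indexing diagrams surjectivity of transition maps alone does not force the higher derived functors of $\lim$ to vanish. Even the simplest non-tower example (a cospan) shows one needs a model-categorical reason, not just degreewise surjectivity. The paper handles this precisely by equipping the cube category with the Reedy structure from \cite[Corollary 1.17]{DKLSattC} (degree-raising maps generated by face maps, degree-lowering part consisting only of identities), computing the matching object $M\sb{n}(\phi)$ as an equalizer over face restrictions, observing via Remark \ref{projection} that the matching map $C\sb{\phi}\to M\sb{n}(\phi)$ is a projection (hence a fibration in $\Ch\sp{+}(\AbGp)$), and concluding Reedy fibrancy. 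That Reedy-fibrancy statement is exactly the rigorous form of the intuition you are reaching for, and is what makes the homotopy limit agree with the ordinary limit. So your observation that surjectivity is the key input is correct, but the mechanism through which it acts needs to be the Reedy structure, not a $\lim^1$/Mittag-Leffler argument; without this the step is a genuine gap.

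Your point about cofinality (passing from the poset $J$ of finite cubical subcomplexes to the non-degenerate cube category) is implicitly handled in the paper by the two forms of \wref{eqholim}, and your remark about checking the differentials glue is correct but essentially automatic from the naturality in Lemma \ref{lem6.4} and Remark \ref{projection}, as you note.
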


\begin{proof}
Let \w{\w{\FinCub\sb{/\fN(\mX)}}} denote the category of finite cubical complexes
over \w[,]{\fN(\mX)} and $J$ that of maps \w{\fC(K) \to \mX} which are
adjoint to those of \w[,]{\FinCub\sb{/\fN\mX}} where \w{K \subseteq \fN(\mX)}
is a finite cubical subcomplex.

It follows from Corollary \ref{cor5.9} and an argument similar to that of
Lemma \ref{lem4.9} that we have an equivalence
$$
\mX \simeq \underset{\phi : (\DK([1]\sp{n}) \to \mX)
  \in(\DK\bone{\ast})\sb{/\mY}\sp{\ND})}{\holim} [1]\sp{n} \simeq
\underset{(\widetilde{\phi}:\fC(K) \to \mX)\in J}{\holim}\fC(K)~.
$$
\noindent Thus, we have
\begin{myeq}[\label{eqholim}]
\mK~ \simeq
\underset{(\widetilde{\phi}:\fC(K) \to \mX)\in J}{\holim}
C\sp{\ast}(\fC(K)) \simeq
\underset{\phi \in(\DK\bone{\ast})\sb{/\mY}\sp{\ND})}{\holim} C\sb{\phi}~,
\end{myeq}
where $\mK$ is a complex whose cohomology computes the values of
\w[.]{H\sp{\ast}\sb{Q}(\mY, \pi\sb{n}(\mX))}
The first  equivalence follows from the preceding paragraph, \wref[,]{eqlimit}
and Lemma \ref{lem6.4}.
The second of these equivalences follows from an argument similar to the proof
of Lemma \ref{lem4.9}.

To identify the homotopy limit on the left hand side of \wref[,]{eqholim} we note that
the category $\square$ of cubes can be given the structure of a Reedy category
by \cite[Corollary 1.17]{DKLSattC}, where the degree decreasing morphisms are
those generated by face maps and degree function
\w{\deg: \Obj(\square) \to \mathbb{N}} is given by
\w[.]{\deg([1]\sp{n}) = n} This induces a Reedy structure on
\w{(\DK\bone{\ast})\sb{/\mY}\sp{\ND}} with
\w[,]{\deg([1]\sp{n} \to \mY) = n}
\w[,]{((\DK\bone{\ast})\sb{/\mY}\sp{\ND})_+ = (\DK\bone{\ast})\sb{/\mY}\sp{\ND}}
and
\w[.]{((\DK\bone{\ast})\sb{/\mY}\sp{\ND})_- =
\{\id_\phi : \phi \in \Obj(( \DK\bone{\ast})\sb{/\mY}\sp{\ND}) \}}

The matching object of \w{\phi: \square\sp{n} \to \mY} is the equalizer
$$
\xymatrix
{
  M\sb{n}(\phi) \ar[r] & \bigoplus\sb{0 \le i \le n, \epsilon = 0, 1}
  C\sb{\phi \circ d\sp{i, \epsilon}} \ar@<1ex>[r]\sp{(d^i)\sp{\ast}}
  \ar@<-1ex>[r]\sb{(d^j)\sp{\ast}} &
  \bigoplus\sb{0 \le i \le n, \epsilon = 0, 1}
  C\sb{\phi \circ d\sp{i, \epsilon} d\sp{j, \epsilon}}~.
}
$$
By Remark \ref{projection}, the natural map \w{\phi \to M\sb{n}(\phi)}
is given by projection onto each factor, and is thus surjective. Therefore, $\phi$ is
a fibration in the model category \w{\Ch\sp{+}(\AbGp)} of non-negatively graded cochain
complexes of abelian groups (see \cite[Definition 4.7]{CCortC}).
We conclude that the diagram
$$
\Psi:
(\DK\bone{\ast})\sb{/\mY}\sp{\ND} \to \Ch^+(\AbGp)\hs\text{given by}\hsm
\phi \mapsto C_\phi
$$
is Reedy fibrant, so the homotopy limit of $\Psi$ can be computed as an ordinary limit.
\end{proof}

\begin{corollary}\label{cor7.1}
Let $\mX$ and $\mY$ be fibrant simplicial categories such that composition in $\mY$
is monic, and consider the lifting problem
\mydiagram[\label{liftingdiagram}]{
& \Po{n}\mX \ar[d] \\
  \mY \ar[r]\sb{\phi} \ar@{.>}[ur] & \Po{n-1}\mX~.
  }
\noindent The obstruction to having a lift in \ref{liftingdiagram} is
given by a class \w[.]{\beta \in H\sp{1}(C\sp{\ast}(\mY))}
\end{corollary}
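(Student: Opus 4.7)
The proof is essentially an assembly of three results already established in the paper, so the plan is short. First, I would invoke the discussion of subsection \ref{skinv}, which identifies the Postnikov section map $\Po{n}\mX \to \Po{n-1}\mX$ as a small extension in the sense of Definition \ref{def1.5}: by \cite[Proposition 3.2]{DKSmitO}, the homotopy pullback square exhibited in \eqref{eqkinv} realizes $\Po{n}\mX$ as the pullback of a weak $0$-section along the classifying map $k\sb{n-1}:\Po{n-1}\mX \to K(\Pi\sb{n}\mX, n+1)$. Thus the lifting problem \eqref{liftingdiagram} is precisely of the form considered in Proposition \ref{thm2.4}, with $B$ taken to be the initial simplicial category (so that the left column becomes trivial) and the bottom horizontal map given by $\phi$ composed with $k\sb{n-1}$.

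Next, I would apply Proposition \ref{thm2.4} to produce an obstruction class
\[
[\beta] \in H\sp{1}\sb{Q}(\mY; \phi\sp{\ast}\pi\sb{n}\mX)
\]
whose vanishing is necessary and sufficient for the existence of the dotted lift. Naturality of this class is already built into the statement of Proposition \ref{thm2.4}.

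Finally, I would apply Theorem \ref{thm7.3}, whose hypothesis that each composition in $\mY$ is monic is exactly the hypothesis assumed here. This theorem identifies $H\sp{\ast}\sb{Q}(\mY,\pi\sb{n}\mX)$ with the cohomology of the cochain complex $C\sp{\ast}(\mY)$ of Definition \ref{dcochaincx}. Under this identification, the class $[\beta]$ from Proposition \ref{thm2.4} transports to a class $\beta \in H\sp{1}(C\sp{\ast}(\mY))$, completing the proof.

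There is no real obstacle here: every genuine piece of technical content has been absorbed into the earlier results. The only thing worth being careful about is the indexing convention (see Remark \ref{exam2.2}), namely that the classical $k$-invariant in topological degree $n+1$ appears in $H\sp{1}\sb{Q}$ under the spectral Andr\'{e}-Quillen normalization, which is why the obstruction lands in $H\sp{1}$ of $C\sp{\ast}(\mY)$ rather than a higher degree.
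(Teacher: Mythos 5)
Your proposal is correct and takes essentially the same route as the paper. The paper's stated proof is just a compressed version of yours: it cites Theorem \ref{thm2.7} (which itself combines Proposition \ref{thm2.4} with the homotopy colimit decomposition underlying Theorem \ref{thm7.3}) and the fact that $\Po{n}\mX$ is an $(n+1)$-category, whereas you unwind the chain of references explicitly — identifying the Postnikov stage as a small extension via \S\ref{skinv}, invoking Proposition \ref{thm2.4} for the obstruction class, and then applying Theorem \ref{thm7.3} to read it off in $H^1(C^*(\mY))$; your attention to the indexing shift in Remark \ref{exam2.2} is exactly what justifies the degree $1$.
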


\begin{proof}
Since \w{\Po{n}\mX} is an \wwb{n+1}category, this follows from Theorem \ref{thm2.7}.
\end{proof}

\begin{remark}\label{rcubcx}
By Lemma \ref{lem5.5}, the hypothesis $\mY$ is monic will hold if $\mX$ is fibrant
and cofibrant. Moreover, if $\widehat{\mY}$ is a cubical \emph{complex}, it is provided
with a colimit decomposition which is actually a homotopy colimit, so if
\w{\widehat{\mY}\to\mY} is a fibrant replacement (in \w[),]{\sCat} the same
homotopy colimit description holds for $\mY$, and therefore provides a more
explicit presentation of the cochain complex \w[.]{C\sp{\ast}(\mY)}
\end{remark}

%
%
\sect{The lifting obstruction for the boundary of a cube}
\label{ccccc}

Proposition \ref{thm2.4} and Theorems \ref{thm5.8} and \ref{thm7.3}
suggest that the key to understanding the $k$-invariants of a
simplicially enriched category $\mX$ is to analyze them for the
cubes and cubical complexes in $\mX$. Here we do so for the
simplest non-trivial case: the boundary of a cube. In the next two
sections we explain how this example can be used for various
applications.

\begin{mysubsection}{\w{\bone{n}} as a simplicial category}
\label{sosc}
Note that for each \w[,]{n\geq 0} the category \w{\bone{n}} may be identified with
the lattice of subsets of \w[,]{\bbn=\{1,2,\dotsc,n\}} which we shall think of
as an $n$-dimensional cube \w{I\sp{n}} with vertices labelled by the characteristic
function \w{J=(\var{1},\var{2},\dotsc,\var{n})} of a subset of  $\bbn$ (with
each \w[).]{\var{i}\in\{0,1\}} Thus \w{(0,0,\dotsc,0)} is \w{J\sb{\emptyset}} and
\w[.]{(1,1,\dotsc,1)} is \w{J\sb{\bbn}}
Write \w{|J|:=n-\var{1}+\var{2}+\dotsc+\var{n}} for the number of zeros in $J$,
so \w{|J\sb{\emptyset}|=n} and \w[.]{|J\sb{\bbn}|=0} The morphisms in
\w{\bone{n}} are given by \w{J\preceq J'} whenever
\w{\var{i}\leq\varp{i}} for all \w[.]{1\leq i\leq n}

If $X$ is an \wwb{\infty,1}category, an \emph{$n$-cube in} $X$ is indexed by
\w[.]{B(\bone{n})} In particular, we can use the Dwyer-Kan resolution
\w{\DK(\bone{n})} as a cofibrant model for \w{B(\bone{n})} in \w[.]{\sCat}

Recall that the $n$-\emph{permutohedron} \w{\cP\sb{n}} is the convex hull of the
\w{(n+1)!} points in \w{\RR\sp{n+1}} obtained by permuting the coordinates of
\w{(x\sb{0},\cdots,x\sb{n})} for \w{n+1} distinct real numbers
\w[.]{\{x\sb{0},\cdots,x\sb{n}\}}
Thus the $0$-permutohedron is a point, the $1$-permutohedron is an interval,
and the $2$-permutohedron is a hexagon.

Denoting the standard triangulation of \w{\cP\sb{n}} by \w{P\sb{n}}
(cf.\ \cite[\S 5.3]{BMeadS}), we have:
\end{mysubsection}

\begin{prop}\label{pperm}
For every \w{J,J'\in \bone{n}} with \w[:]{J\prec J'}
\begin{myeq}[\label{eqpermut}]
\Map\sb{\DK(\bone{n})}(J, J')~\cong~ P\sb{|J|-|J'|-1}~.
\end{myeq}
\end{prop}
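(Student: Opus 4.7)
The plan is to identify the Dwyer--Kan mapping space with the nerve of a combinatorial poset of chains inside the cube, and then to recognize that poset as the face poset of a permutohedron.

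Set $k := |J| - |J'|$ and let $D := \{i \in \bbn : J_i \neq J'_i\}$, so that $|D| = k$. My first step is to apply the standard description of $\DK(P)$ for a poset $P$: the mapping space $\Map_{\DK(P)}(x,y)$ is naturally isomorphic to the nerve of the poset $\mathcal{C}_{x,y}$ whose elements are finite chains $x = z_0 \prec z_1 \prec \cdots \prec z_m = y$, ordered by refinement (that is, inclusion of intermediate vertices). This follows from the bar-construction definition of $\DK$: since every composition in a poset is uniquely determined, the only combinatorial data in a morphism of $\DK(P)_n$ from $x$ to $y$ is the sequence of intermediate objects. Alternatively, one may replace $\DK(\bone{n})$ by the weakly equivalent cofibrant model $\mathfrak{C}(N(\bone{n})) = \mathfrak{C}((\Delta^1)^n)$ and apply Lurie's explicit formula for $\mathfrak{C}$.

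Specializing to $\bone{n}$, a chain $J = J_0 \prec J_1 \prec \cdots \prec J_m = J'$ records, at each step $j$, a nonempty subset $S_j \subseteq D$ of coordinates flipped from $0$ to $1$ in passing from $J_{j-1}$ to $J_j$. The tuple $(S_1, \ldots, S_m)$ is an ordered partition of $D$, and the assignment is a bijection compatible with refinement. Thus $\mathcal{C}_{J,J'}$ is isomorphic, as a poset, to the poset $\mathcal{OP}(D)$ of ordered set partitions of $D$. I then invoke the classical identification of $\mathcal{OP}(D)$ with the face poset of the $(k-1)$-dimensional permutohedron $\cP_{k-1}$: linear orders of $D$ correspond to vertices, two-part ordered partitions to facets, and the one-block partition to the top face. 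Since the nerve of the face poset of a polytope is its standard (barycentric) triangulation, we obtain $\Map_{\DK(\bone{n})}(J, J') \cong N(\mathcal{OP}(D)) = P_{k-1}$, as claimed.

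The main obstacle is the first step: making precise the description of the Dwyer--Kan mapping spaces of a poset and verifying that the claimed isomorphism in \eqref{eqpermut} holds on the nose, rather than merely up to weak equivalence. If only the latter is intended, this step simplifies dramatically, since one can freely replace $\DK(\bone{n})$ by any convenient cofibrant model of $\bone{n}$ in $\sCat$; otherwise one must commit to a specific model and carry out a direct combinatorial verification, which is conceptually transparent but notationally involved.
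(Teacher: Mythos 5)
Your proposal is correct, but it takes a genuinely different route from the paper's proof. The paper's argument is a reduction: it constructs a functor $U\colon \bone{n}\to\Drn{n-1}\op$ (sending $J$ with $|J|=m+1$ to $[\mathbf{m}]$) which is bijective on the total set of arrows, observes that this makes the mapping spaces of $\DK(\bone{n})$ into connected components of the mapping spaces of $\DK(\Drn{n-1}\op)$, and then cites the authors' earlier identification of the latter with permutohedra in \cite[Proposition 5.6]{BMeadS}. Your argument is instead self-contained and combinatorial: you identify $\Map\sb{\DK(P)}(x,y)$ for a poset $P$ with the nerve of the poset of chains from $x$ to $y$ under refinement, match chains from $J$ to $J'$ with ordered set partitions of the set $D$ of flipped coordinates, and recognize that poset as the face poset of $\cP\sb{k-1}$ whose nerve is the standard (barycentric) triangulation. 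The paper's route is shorter and defers the combinatorics to the cited result; yours buys self-containedness at the cost of having to establish the foundational identification of $\DK$ mapping spaces of a poset with nerves of chain posets, which (as you correctly flag) requires either careful bookkeeping in the bar construction or a passage to a weakly equivalent model such as $\fC(N(\bone{n}))$. Since that same identification is ultimately what underlies \cite[Proposition 5.6]{BMeadS}, the two proofs rest on the same combinatorial bedrock; yours just unpacks it in place rather than citing it.
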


\begin{proof}
There is a functor \w{U:\bone{n}\to\Drn{n-1,-1}\op} (see \S \ref{snac}),
bijective on the set of arrows and unique up to automorphism of \w[,]{\bone{n}}
with \w{U(J)=\bmm} if \w[.]{|J|=m+1} This implies that the mapping spaces in
\w{\DK(\bone{n})} are the components of the corresponding mapping spaces in
\w[,]{\DK(\Drn{n-1}\op)} and the result follows from \cite[Proposition 5.6]{BMeadS}.
\end{proof}

\begin{corollary}\label{cperm}
In any simplicial category $\mX$ and \w[,]{n\geq 0} the $k$-invariants for any
$n$-cube \w{f:\DK(\bone{n})\to\mX} vanish.
\end{corollary}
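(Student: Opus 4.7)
The plan is as follows. Given a cube $f:\DK(\bone{n})\to\mX$, for each $N\geq 1$ the $k$-invariant for $f$ measures the obstruction to lifting the composite $p_{N-1}\circ f:\DK(\bone{n})\to\Po{N-1}\mX$ through the projection $\Po{N}\mX\to\Po{N-1}\mX$. By Corollary \ref{cor7.1}, this obstruction is encoded as a class $\beta\in H^{1}(C^{\ast}(\DK(\bone{n})))$, and my task is to show $\beta=0$ uniformly in $N$ and $f$.

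First, I would record the essentially tautological argument: the composite $p_{N}\circ f:\DK(\bone{n})\to\Po{N}\mX$ is an explicit solution to the lifting problem, so by the naturality of the obstruction class in Proposition \ref{thm2.4} (and hence in Corollary \ref{cor7.1}) we conclude $\beta=0$ for every $N$.

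Second, to explain the role of Proposition \ref{pperm} and foreshadow Theorem C, I would give the following geometric interpretation. Set $M:=\Map_{\mX}(f(J_{\emptyset}),f(J_{\bbn}))$. By Proposition \ref{pperm}, the corresponding mapping space in the cube is $\Map_{\DK(\bone{n})}(J_{\emptyset},J_{\bbn})\cong P_{n-1}$, a contractible convex polytope, and $f$ induces a map $f_{\ast}:P_{n-1}\to M$. Restricting $f_{\ast}$ to the boundary yields a map $\partial P_{n-1}\cong \bS{n-2}\to M$; this is exactly the obstruction cycle associated by Theorem C to the boundary restriction of $p_{N-1}\circ f$. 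Because the cycle extends across the contractible polytope $P_{n-1}$ via $f_{\ast}$ itself, it is null-homotopic in $M$, so its class vanishes in $\pi_{n-2}(M)$.

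The main obstacle is rigorously identifying the abstract chain-complex class $\beta$ with the concrete boundary cycle $f_{\ast}|_{\partial P_{n-1}}$. However, once the description of $\beta$ in Theorem C (developed in Section \ref{ccccc}) is in place, this identification becomes clear, and the corollary follows at once from the contractibility of the permutohedron supplied by Proposition \ref{pperm}.
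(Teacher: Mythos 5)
Your proof is correct, but it is organized differently from the paper's, which is worth noting. The paper's entire proof is the one-line observation that all mapping spaces of $\DK(\bone{n})$ are weakly contractible, and that the same holds for the generated subcategory $\mX_{f}$ of $\mX$ (since the factorization $j:\DK(\bone{n})\to\mX_{f}$ is a weak equivalence); the $k$-invariant representative is constructed entirely inside these contractible mapping spaces, so its class must vanish. Your \emph{second} argument is essentially the same idea, just localized to the top mapping space $P_{n-1}\cong\Map_{\DK(\bone{n})}(J_{\emptyset},J_{\bbn})$ and phrased as ``the boundary cycle extends to a filling,'' so it matches the paper's reasoning (though it leans on the identification of Theorem \ref{tpbclass}, which is proved \emph{after} this corollary in the paper, so if you want a self-contained argument you should cite the contractibility directly rather than route through Theorem C; also note the relevant dimension for the full $n$-cube is $\pi_{n-1}(M)$, not $\pi_{n-2}(M)$ --- the latter pertains to $\partial\sq{n}$). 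Your \emph{first} argument, however, is genuinely different from and simpler than what the paper records: since $f$ factors through $\mX$ itself, the composite $p_{N}\circ f$ is an explicit lift of $p_{N-1}\circ f$, so the obstruction class of Proposition \ref{thm2.4}/Corollary \ref{cor7.1} is zero for formal reasons, with no appeal to the geometry of permutohedra at all. This tautological argument is cleaner and only uses material appearing earlier in the paper; what it buys less of is explanatory value for the sequel, where the paper deliberately wants the geometric, cocycle-level description (collarings of permutohedra, etc.) to set up the non-trivial obstruction for $\partial\sq{n}$.
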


\begin{proof}
All the mapping spaces in \w{\DK(\bone{n})} are weakly contractible, so the same is
true of their images under $f$ in $\mX$.
\end{proof}

\begin{mysubsection}{The $k$-invariant of a cube}
\label{skinvcube}
For all pairs of the form \w{J\prec J'} in \w{\Obj{\bone{n}}=\Obj{\DK(\bone{n})}}
except \w[,]{J\sb{\emptyset}\prec J\sb{\bbn}} the simplicial set
\w{\Map\sb{\DK(\bone{n})}(J, J')} is of dimension \w[.]{m\leq n-1}
We may assume $\mX$ is fibrant, and factor \w{f:\DK(\bone{n})\to\mX} as
\w{\DK(\bone{n})\xra{j}\mX\sb{f}\xra{p}\mX} \wwh a trivial cofibration $j$
followed by a fibration in \w[.]{\sCat} Since \w{\DK(\bone{n})} is cofibrant,
\w{\mX\sb{f}} is the (weakly contractible) fibrant and cofibrant sub-simplicial
category of $\mX$ generated by the image of $f$.

Even though the $k$-invariants of the $n$-cube $f$ are trivial, we still need to
represent them at the space level, as in Theorem \ref{thm2.7}.
In particular, for \w[,]{J\sb{\emptyset}\prec J\sb{\bbn}} we need a representative
of a class in \w[,]{\pi\sb{n-1}(M,\phi')} where
\w{M:=\Map\sb{\mX}(f(J\sb{\emptyset}),\,f(J\sb{\bbn}))} and \w{\phi'=f(\phi)} for
  \w{\phi:J\sb{\emptyset}\to J\sb{\bbn}} the unique map in \w{\bone{n}}
(thought of as a vertex in \w[).]{\DK(\bone{n})}

Let \w{\alpha\in M\sb{n-1}} be a simplex representing the \wwb{n-1}permutohedron
$P$ \wh that is, the top cell of
\w[:]{\Map\sb{\DK(\bone{n})}(J\sb{\emptyset},J\sb{\bbn})}
this exists because \w{\mX\sb{f}} is fibrant, so $M$ is a Kan complex, but it
has non-trivial boundary $\beta$ (the restriction of $f$ to \w[).]{\partial P}
To remedy this, we follow \cite[\S 4]{BDreC} or \cite[\S 8]{BPaolT}:

For every \w{0\leq k\leq n-1} and $k$-simplex $\sigma$ in the nerve
of \w[,]{f\sb{\#}(\bone{n})\subseteq\ho\mX} choose a fixed representative \w{s(\sigma)}
in \w[.]{\mX\sb{f}} Note that once we have made our initial choice \w{s(g)} for each
edge $g$ of the cube $f$, we have a contractible set of choices for the remaining
vertices and higher simplices, corresponding to the various composites of the edges
$g$ and the witnesses for these composites.
The non-degenerate \wwb{n-1}simplices \w{(\tau\sb{i})\sb{i=1}\sp{N}} form a
triangulation of \w[.]{f[P]} For each such \w{\tau\sb{i}} indexed by the composable
sequence \w{(g\sp{i}\sb{1},\dotsc, g\sp{i}\sb{n})} in \w[,]{\ho\mX} let $v$
be the vertex of \w{\tau\sb{i}} indexed by the composite
\w{s(g\sp{i}\sb{1}\circ\dotsc\circ g\sp{i}\sb{k})} (which, as the barycenter of
$P$, is common to all the simplices \w[).]{\tau\sb{i}}
See Figure \ref{eqfatsquare} below.

Let \w{\widehat{\tau\sb{i}}} denote the prism on the \wwb{n-1}face
of \w{\tau\sb{i}} opposite $v$, with
\w{q:\widehat{\tau\sb{i}}\to\tau\sb{i}} the quotient map
collapsing the top facet of \w{\widehat{\tau\sb{i}}} to the point
\w[.]{v\sp{i}} The union
\w{T:=\bigcup\sb{i=1}\sp{N}\,\widehat{\tau\sb{i}}} of these prisms
(with identifications along the sides corresponding to those of
the triangulation of $P$) forms a collaring of $P$, in the sense that $P$ is a
deformation retract of \w[,]{\widehat{P}:=P\cup T} with the retraction collapsing each
prism to its base. The extension $\widehat{\alpha}$ of $\alpha$ to $\widehat{P}$ is
defined by collapsing the top \w{A'} of each prism \w{\widehat{\tau\sb{i}}} in $T$
to a point, yielding a cone on the bottom $A$ of the prism, which is mapped to the
image of its reflection inside $P$
(the cone on $A$ with cone point the barycenter of $P$). In particular, \w{A'} is sent
to the basepoint \w{s(\phi)\in\mX\sb{f}} corresponding to the chosen representative
of the composite of the edges of the cube $f$. Thus the map $\widehat{\alpha}$, the
image of $\widehat{P}$ under \w[,]{j:\DK(\bone{n})\to\mX\sb{f}} is a \emph{topological
  representative} for a (necessarily trivial) element in \w[,]{\pi\sb{n-1}M} and the
latter is the value of the \wwb{n-2}th $k$ invariant for the $n$-cube $f$ in $\mX$
(or equivalently, for the fibrant simplicial category \w[).]{\mX\sb{f}}
\end{mysubsection}

\begin{example}\label{egtwocube}
When \w[,]{n=2} the cohomology groups we obtain are generally not
abelian (unless $\mX$ is \emph{linear} in the sense of \cite[\S
2-4]{BDreC}). However, the above description still makes sense. In
fact, if we think of the quasi-category description of a commuting
square in the form of the central square in Figure
\wref[,]{eqfatsquare} we depict the two $2$-simplices
\w{\tau\sb{1}=(h\sb{1}g\sb{1})} and \w{\tau\sb{2}=(h\sb{2}g\sb{2})}
which constitute the triangulation of the square on the outside,
as a collaring. For simplicity we assume that the given edges of
the square are precisely the chosen representatives
\w[,]{s[g\sb{1}]} and so on \wh otherwise we need to interpolate
an appropriate homotopy between the global choice \w{s[g\sb{1}]}
and the value \w{g\sb{1}} assigned by $f$.
\mytdiag[\label{eqfatsquare}]{
&&&\\
&  f(J\sb{00}) \ar `l[ld] `d[dd]\sb{\phi=s([h\sb{1}]\circ[g\sb{1}])} `r[r] [rd]
  \ar `u[ru] `r[rr]\sp(0.5){\phi=s([h\sb{2}]\circ[g\sb{2}])} `d[d] [rd]
  \ar[d]\sb{s[g\sb{1}]} \ar[r]\sp{s[g\sb{2}]} &
f(J\sb{01})\ar@{=>}[ru]\sb{s(\tau\sb{2})}\ar[d]\sp{s[h\sb{2}]} & \\
& f(J\sb{10}) \ar[r]\sb{s[h\sb{1}]}
\ar@{=>}[ld]\sb{s(\tau\sb{1})} \ar@{=>}[ru]\sb{\alpha} & f(J\sb{11})  &  \\
  &&&
}
\noindent In the corresponding simplicial category $\mX$, this appears as the following
concatenation of $1$-simplices in \w[:]{M:=\Map\sb{\mX}(f(J\sb{00}),\,f(J\sb{11}))}
\mytdiag[\label{eqflatsquare}]{
  \phi & s[h\sb{1}]\circ s[g\sb{1}] \ar[l]\sb{s[\tau\sb{1}]} \ar[r]\sp{\alpha} &
s[h\sb{2}]\circ s[g\sb{2}] \ar[r]\sp{s[\tau\sb{2}]}& \phi
  }
\noindent which yields a loop in $M$ based at $\phi$, as required.  To see that
it is null-homotopic, note that the concatenation of \w{s[\tau\sb{1}]} and
\w{s[\tau\sb{2}]} along $\phi$ is homotopic to $\alpha$, since \w{\mX\sb{f}}
is weakly equivalent to the image of $f$, which has a single such $1$-simplex.
\end{example}

\begin{mysubsection}{The boundary of a cube}
\label{sbcube}
Even though each simplicial category $\mX$ is the homotopy colimit of its cubes,
this does not imply, of course, that the $k$-invariants for $\mX$ are trivial.
The most important example is the boundary of an $n$-cube.
\end{mysubsection}

\begin{defn}\label{dbcube}
Given a fibrant simplicial category $\mX$ and a map
$$
f:\DK(\sk{n-1}\bone{n})\to\mX~,
$$
\noindent factor $f$ as a trivial cofibration
\w{j:\DK(\sk{n-1}\bone{n})\to\mX\sb{f}} followed by a fibration
\w[.]{\phi:\mX\sb{f}\to\mX}
Let
$$
P:=\Map\sb{\DK(\bone{n})}(J\sb{\emptyset},\,J\sb{\bbn})\hs\text{and}\hs
M:=j(P)=\Map\sb{\mX}(f(J\sb{\emptyset}),f(J\sb{\bbn}))~,
$$
\noindent with base-point \w{\psi:f(J\sb{\emptyset})\to f(J\sb{\bbn})}
a fixed representative of the composite of the edges of the cube.

The corresponding \emph{facet class} \w{\beta\sb{f}\in\pi\sb{n-2}(M,\psi)}
is obtained by applying the universal property of the homotopy colimit to the
representatives \w{\alpha\sb{\rho}} constructed in \S \ref{skinvcube} above,
for each of the facets $\rho$ of \w{\DK(\sk{n-1}\bone{n})} (or equivalently, of
\w[),]{\mX\sb{f}} which are themselves \wwb{n-1}cubes in \w[.]{\mX\sb{f}}

Note that the collarings \w{T\sb{\rho}} of each permutohedron
\w{T\sb{\rho}} map into $\mX$ by the fixed representatives \w[,]{s(\sigma)}
which do not depend on $\rho$.
\end{defn}

\begin{thm}\label{tpbclass}
Given a map \w{f:\DK(\sk{n-1}\bone{n})\to\mX} into a fibrant simplicial category,
with \w{\phi:\mY:=\mX\sb{f}\to\mX} a fibrant replacement as above,
the pullback $\beta$ of the \wwb{n-2}nd $k$-invariant for $\mX$ along $\phi$
is represented by the facet class \w[.]{\beta\sb{f}}
\end{thm}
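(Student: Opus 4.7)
The plan is to identify the abstract cohomological obstruction class produced by Corollary \ref{cor7.1} with the concrete, geometrically assembled facet class $\beta\sb{f}$ from Definition \ref{dbcube}. Since $\mY=\mX\sb{f}$ is obtained from $\DK(\sk{n-1}\bone{n})$ by a trivial cofibration into a fibrant object, $\mY$ is fibrant and cofibrant, so by Lemma \ref{lem5.5} every composition in $\mY$ is monic. Hence Theorem \ref{thm7.3} and Corollary \ref{cor7.1} apply: the obstruction to a lift $\mY\to\Po{n-1}\mX$ of the composite $\mY\to\mX\to\Po{n-2}\mX$ is a class $\beta\in H\sp{1}(C\sp{\ast}(\mY))$, and by the naturality built into Theorem \ref{thm2.7}, this class is precisely the pullback along $\phi$ of the $(n-2)$-nd $k$-invariant of $\mX$.

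Next, I would analyze $C\sp{\ast}(\mY)$ by exploiting the cubical structure of $\mY$. By Remark \ref{rcubcx}, the cubical complex $\DK(\sk{n-1}\bone{n})$ provides a strict colimit decomposition of $\mY$ as the union of its facet cubes $\DK(\rho)$ for the $2n$ facets $\rho$ of $\bone{n}$, glued along their shared sub-faces. Via Definition \ref{dcochaincx}, this exhibits $C\sp{\ast}(\mY)$ as an inverse limit of the cubewise cochain complexes $C\sb{\rho}$ of Proposition \ref{thm6.5}, with transition maps the projection restrictions. By Lemma \ref{lem6.4}, the coboundaries in each $C\sb{\rho}$ are induced by pre- and post-composition on the homotopy groups of the permutohedral mapping spaces, so a cocycle representative of $\beta$ is the same data as a compatible family of cochains, one for each $(n-1)$-cube facet, whose values lie in $\pi\sb{n-2}$ of the appropriate mapping space.

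The compatible family in question is given precisely by the representatives $\alpha\sb{\rho}$ constructed in \S \ref{skinvcube}: for each facet cube $\rho$, Corollary \ref{cperm} guarantees that the $k$-invariant vanishes as a class, but $\alpha\sb{\rho}$ is an explicit space-level representative of the trivial class, built from the collaring $T\sb{\rho}$ of the permutohedron. The key point is that the fixed global choices $s(\sigma)$ used in Section \ref{skinvcube} are \emph{shared} across all facets, so the collarings $T\sb{\rho}$ glue coherently along their intersections, which are exactly the common sub-permutohedra corresponding to lower-dimensional faces of $\partial \sq{n}$. By the universal property of the homotopy colimit decomposition (Theorem \ref{thm5.8}), this coherent family assembles to a single element of $\pi\sb{n-2}(M,\psi)$, which is by definition the facet class $\beta\sb{f}$. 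Tracing through the identification of $H\sp{1}_{Q}(\mY,\pi\sb{n-1}(\mX))$ with $\pi\sb{n-2}(M,\psi)$ given by Proposition \ref{thm6.5} in the top interval $[J\sb{\emptyset},J\sb{\bbn}]$ then yields $\beta=\beta\sb{f}$.

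The hardest part will be bookkeeping the sign and orientation conventions: the alternating signs in the cubical differential of Construction \ref{con6.3} (coming from pre- versus post-composition) must be shown to match the orientations of the collarings of adjacent facets on $\partial P\sb{n-1}$. In low dimensions (cf.\ Example \ref{egtwocube}) this is transparent, but in general one has to verify that the permutohedral boundary decomposition, the collaring quotient maps, and the cubical cochain differential all induce the same orientations on shared sub-permutohedra, so that the $\alpha\sb{\rho}$ indeed constitute a cocycle rather than just a cochain, and represent $\beta\sb{f}$ on the nose rather than up to a sign.
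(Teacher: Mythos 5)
Your proof takes essentially the same approach as the paper's, which is also quite terse: identify $\beta$ with the obstruction class of Corollary \ref{cor7.1}, decompose $\mY$ as a (homotopy) colimit of its facets, and use the universal property together with the representatives $\alpha_\rho$ from \S\ref{skinvcube} to recognize $\beta_f$. You are slightly more explicit about the preconditions (citing Lemma \ref{lem5.5} and Theorem \ref{thm7.3} to justify invoking Corollary \ref{cor7.1}), and you route the colimit decomposition through Remark \ref{rcubcx} and Theorem \ref{thm5.8} where the paper cites Theorem \ref{thm4.1}; since $\mY$ arises from the cubical complex $\sk{n-1}\bone{n}$, your citation is arguably the more precise one, but the substance is the same.

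The one point where you stop short is the sign/orientation verification for the cocycle condition, which you candidly flag as "the hardest part" without discharging it. The paper handles the related well-definedness concern (dependence on the fixed representatives $s(\sigma)$) by citing \cite[Lemma A.1]{BDreC}, asserting that different choices give cohomologous cocycles, but it likewise does not spell out why the orientations of adjacent collarings match the alternating signs of Construction \ref{con6.3}. Since your concern is compatible with, but not literally covered by, the paper's citation, you might at least gesture at \cite[Lemma A.1]{BDreC} for the choice-dependence and note that the orientation consistency follows because both the permutohedral boundary decomposition and the cubical differential are computed from the same $\Tw(\bone{n})$ data via Lemma \ref{lem6.2}, so the signs are coherent by construction. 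With that remark added, your argument matches the paper's in all essentials.
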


\begin{proof}
  This is the class $\beta$ of Corollary \ref{cor7.1}, and since $\mY$ is the
  homotopy colimit of the images of the facets of \w[,]{\bone{n}}
  by Theorem \ref{thm4.1},
$\beta$ is determined by the universal property of the colimit form the values
on the individual facets, which are given by Proposition \ref{thm6.5} and
the discussion in \S \ref{skinvcube}. Note that different choices of
\w{s(\sigma)} there may yield different values in \w[,]{\pi\sb{n-2}(M,\psi)}
but the resulting cocycles will be cohomologous (see \cite[Lemma A.1]{BDreC}).
\end{proof}

\begin{example}\label{egboundcube}
For \w[,]{n=3} we can extend the description of Example \ref{egtwocube} as
follows: in the quasi-category description of the boundary of a $3$-cube we have
Figure \wref[,]{eqboundcube} where the homotopies on the three hidden facets
are dashed.
\myudiag[\label{eqboundcube}]{
%
%
f(J\sb{000}) \ar@<1ex>[rr]\sp{s[g\sb{2}]} \ar|(.7){\hole}[rd]\sp{s[g\sb{1}]}
\ar[dd]\sb{s[g\sb{3}]} &&
f(J\sb{010}) \ar[rd]\sp{s[h\sb{21}]} \ar|(.5){\hole}[dd]\sb(0.7){s[h\sb{23}]} &&  \\
& f(J\sb{100}) \ar[rr]\sp(0.65){s[h\sb{12}]} \ar[dd]\sp(0.3){s[h\sb{13}]}
\ar@{=>}[ru]\sb{\alpha\sb{0}} &&
f(J\sb{110}) \ar[dd]\sp{s[k\sb{3}]} \\
%
%
f(J\sb{001}) \ar|(.5){\hole}[rr]\sp(0.7){s[h\sb{32}]} \ar[rd]\sb{s[h\sb{31}]}
\ar@{=>}[ru]\sb{\alpha\sb{2}} \ar@/^2.1em/@{==>}[rruu]\sp(0.6){\alpha\sb{1}}
&& f(J\sb{011}) \ar|(.23){\hole}[rd]\sp{s[k\sb{1}]} \ar@{==>}[ru]\sp{\beta\sb{2}} & \\
& f(J\sb{101}) \ar[rr]\sb{s[k\sb{2}]} \ar@{==>}[ru]\sp{\beta\sb{3}}
\ar@<1ex>@/_2.1em/@{=>}[rruu]\sb(0.4){\beta\sb{1}} &&
f(J\sb{111})
}
\noindent Each of the facets should then be collared as in \wref[,]{eqfatsquare}
yielding six separate loops in the appropriate mapping spaces in $\mX$ as
in \wref[.]{eqflatsquare}

However, in order to obtain the value of the lifting obstruction
\w{\beta\in\pi\sb{1}(M,\phi)} for \w{M:=\Map\sb{\mX}(f(J\sb{000}),\,f(J\sb{111}))}
and
$$
\phi=s[k\sb{3}\circ h\sb{12}\circ g\sb{1}]=
s[k\sb{3}\circ h\sb{21}\circ g\sb{2}]=\dotsc
$$
(the unique homotopy class in \w[),]{[f(J\sb{000}),f(J\sb{111}]} we have to pull back
or push forward these loops so they all lie in \w[,]{M\sb{1}} and then concatenate
the result, using  the Homotopy Addition Theorem
(cf.\ \cite[Theorem 2.4]{CurtS}), as in Figure \wref[.]{eqpermutahedron}
\myvdiag[\label{eqpermutahedron}]{
&&
s[k\sb{3}]\circ s[h\sb{12}]\circ s[g\sb{1}]\ar[dll]\sb{s[k\sb{3}]\sb{\#}\alpha\sb{0}}
&& \\
  s[k\sb{3}]\circ s[h\sb{21}]\circ s[g\sb{2}] &&&&
  s[k\sb{2}]\circ s[h\sb{13}]\circ s[g\sb{1}]
  \ar[ull]\sb{s[g\sb{1}]\sp{\#}\beta\sb{1}} \\
  &&&&\\
  s[k\sb{1}]\circ s[h\sb{23}]\circ s[g\sb{2}] \ar[uu]\sp{s[g\sb{2}]\sp{\#}\beta\sb{2}}
  &&&& s[k\sb{2}]\circ s[h\sb{31}]\circ s[g\sb{3}]
  \ar[uu]\sb{s[k\sb{2}]\sp{\#}\alpha\sb{2}}\\
  && s[k\sb{1}]\circ s[h\sb{32}]\circ s[g\sb{3}]
  \ar[urr]\sp{s[k\sb{1}]\sp{\#}\alpha\sb{1}}
  \ar[ull]\sb{s[g\sb{3}]\sp{\#}\beta\sb{3}} &&
}
\noindent Here we have omitted the left and right collarings in the six versions of
\wref[:]{eqflatsquare} for example, the top left edge
in \wref{eqpermutahedron} really should take the form:
\mysdiag[\label{eqflathexagon}]{
\phi &  s[k\sb{3}]\circ s[h\sb{21}\circ g\sb{2}] \ar[l]\sb(0.65){s[\sigma\sb{1}]} &
s[k\sb{3}]\circ s[h\sb{21}]\circ s[g\sb{2}]
\ar[l]\sb(0.49){s[k\sb{3}]\sb{\#}s[\tau\sb{1}]} &
\ar[l]\sb(0.25){s[k\sb{3}]\sb{\#}\alpha\sb{0}} \\
  & s[k\sb{3}]\circ s[h\sb{12}]\circ s[g\sb{1}]
     \ar[r]\sp(0.5){s[g\sb{1}]\sp{\#}s[\tau\sb{2}]}&
   s[k\sb{3}\circ h\sb{12}]\circ s[g\sb{1}] \ar[r]\sp(0.6){s[\sigma\sb{2}]} & \phi~,
}
\noindent for \w[,]{\tau\sb{1}=(h\sb{21},g\sb{2})}
\w[,]{\tau\sb{2}=(k\sb{3},h\sb{12})}
\w[,]{\sigma\sb{1}=(k\sb{3},h\sb{21}\circ g\sb{2})}
and \w[.]{\sigma\sb{2}=(k\sb{3}\circ h\sb{21},g\sb{1})}

Similarly, the left vertical edge arrow really ends in
\mytdiag[\label{eqrightthexagon}]{
  s[k\sb{3}]\circ s[h\sb{21}]\circ s[g\sb{2}]
  \ar[r]\sp{s[g\sb{2}]\sp{\#}s[\tau\sb{3}]}&
s[k\sb{3}\circ h\sb{21}]\circ s[g\sb{2}]\ar[r]\sp(0.6){s[\sigma\sb{3}]} & \phi~,
}
\noindent for \w{\tau\sb{3}=(k\sb{3},h\sb{21})}
and \w[.]{\sigma\sb{3}=(k\sb{3}\circ h\sb{21},g\sb{2})}

However, we can then use the inner horn condition in $\mX$, or the Kan condition
in $M$, to choose successive fill-ins as follows:
\mywdiag[\label{eqfillins}]{
  &&  s[k\sb{3}]\circ s[h\sb{21}]\circ s[g\sb{2}] \\
  &  s[k\sb{3}]\circ s[h\sb{21}\circ g\sb{2}]
  \ar[ru]\sp{s[k\sb{3}]\sb{\#}s[\tau\sb{1}]\sp{-1}} & \\
  \phi \ar[ru]\sp(0.4){s[\sigma\sb{1}]\sp{-1}} && \\
  &  s[k\sb{3}\circ h\sb{21}]\circ s[g\sb{2}]
  \ar[lu]\sp(0.6){s[\sigma\sb{3}]} \ar@{-->}[uu]\sb{\omega}
  \ar@{-->}[uuur]\sp{\psi}&\\
  && s[k\sb{3}]\circ s[h\sb{21}]\circ s[g\sb{2}]
  \ar[lu]\sp{s[g\sb{2}]\sp{\#}s[\tau\sb{3}]} \ar@{-->}[uuuu]\sp{\theta}~,
}
\noindent where $\theta$ need not be homotopic to the identity, since the
choices in \wref{eqboundcube} are all coherent only when the cube has a fill in.
Thus in the simplified diagram \wref{eqpermutahedron} we must actually add
a self-map of each vertex (corresponding to $\theta$ in \wref[).]{eqboundcube}
Note that we need to leave the full version
at one vertex at least if we want a $k$-invariant value in
\w[;]{\pi\sb{1}(M,\phi)} otherwise we must replace $\phi$ by another basepoint
in the same component of $M$.
\end{example}

\begin{mysubsection}{Cubical complexes}
\label{scubcx}
Given a finite cubical complex $K$ in a simplicial category $\mX$, the discussion
in \S \ref{sbcube} yields a full description of the pullback of the
$k$-invariants of $\mX$ to $K$, by induction on the cubical skeleta \w[:]{\sk{n}K}
the passage from \w{\sk{n-1}K} to \w{\sk{n}K} involves filling in certain boundaries
of $n$-cubes in \w[,]{\sk{n-1}K} while forming new boundaries of \wwb{n+1}cubes.
This results in killing the corresponding values in \w{\pi\sb{n-2}\mX}
for \w{k\sb{n-3}} of $K$, while generating new values in
\w{\pi\sb{n-1}\mX} for \w[.]{k\sb{n-2}}

The restriction to (finite) cube complexes, rather than arbitrary cubical sets,
is important, because in this case their structure provides a homotopy colimit
decomposition in terms of ordinary colimits (see Remark \ref{rcubcx}).
\end{mysubsection}

%
%
\sect{$k$-invariants and higher structure}
\label{ckihs}

The local system \w{\pi\sb{n}\mX} of a simplicial category $\mX$
is determined by its homotopy category, so in the principle
the $n$-th $k$-invariant for $\mX$ encodes all the strictly \wwb{n+2}nd order
information in $\mX$ (beyond the information of order \w{\leq n+1} encoded in
the \wwb{n-1}st Postnikov section \w[).]{\Po{n-1}\mX} In order to show that this
statement is not completely vacuous, we briefly describe three simple examples:

\begin{mysubsection}{Differentials in spectral sequences}
\label{sdss}
Spectral sequences occur in many versions, but the most general, for our purposes,
appears to be that of a simplicial space: if \w{\Wd} is a simplicial object and $\bY$
is a homotopy cogroup object in a simplicial category $\mX$, the corresponding
\emph{homotopy spectral sequence} (see \cite[Theorem B.5]{BFrieH} and \cite[\S 8]{DKStB})
has
\begin{myeq}[\label{eqssss}]
E\sp{2}\sb{n,k}~\cong~\pi\sb{n}\sp{h}\pi\sb{k}\sp{v}\map\sb{\mX}(\bY,\Wd)~\Longrightarrow~
\pi\sb{n+k}\map\sb{\mX}(\bY,\hocolim\Wd)
\end{myeq}
\noindent The construction can be extended to any reasonable model of
\wwb{\infty,1}categories (see \cite[\S 1]{BMeadS}).

In \cite{BMeadS}, we provide the following description of the differentials:
if we represent an element in \w{E\sp{1}\sb{n,k}} by a map
\w[,]{f:\Sigma\sp{k}\bY\to\bW\sb{n}} then it survives to the \ww{E\sp{r}}-term if and
only if it can be extended to a homotopy coherent diagram
\myrdiag[\label{eqerrep}]{
\Sigma\sp{k}\bY  \ar@/^1pc/[rr]\sp{0}\sb{\vdots}
  \ar@/_1pc/[rr]\sb{0}\ar[dd]\sb{f}&&
0  \ar@/^1pc/[rr]\sb{\vdots}  \ar@/_1pc/[rr] \ar[dd] && 0 \ar[dd] & \cdots \cdots &
0 \ar[dd]\\
&& && & & \\
\bW\sb{n} \ar@/^1pc/[rr]\sp{d\sb{0}}\sb{\vdots}\ar@/_1pc/[rr]\sb{d\sb{n}}&&\bW\sb{n-1}
\ar@/^1pc/[rr]\sp{d\sb{0}}\sb{\vdots}  \ar@/_1pc/[rr]\sb{d\sb{n-1}} && \bW\sb{n-2} &
\cdots \cdots & x\sb{n-r+1}
}
\noindent in $\mX$. Assuming \w{\Wd} is Reedy fibrant, by adjunction this yields a map
from \w{\DK(\Drn{n, n-r+1}\op)} (see \S \ref{snac}) to the truncation of
\w[.]{\map\sb{\mX}(\bY,\Wd)}
Here we use the fact that the mapping space in the former from \w{\bn} to \w{\bnrp} 
is a disjoint union of copies of \w{\map\sb{\DK(\Drn{r,-1}}(\br,\bmo)} (see
\cite[Proposition 5.6]{BMeadS}), but only one of the copies is
relevant here, by \cite[Corollary 6.10]{BMeadS}.

In particular, we obtain from the diagram \wref{eqerrep}
a map $\varphi$ from the boundary \w{\partial P\sp{r}} of the appropriate permutohedron
to \w{\map\sb{\mX}(\Sigma\sp{k}\bY,\bW\sb{n-r})} (see \S \ref{sosc}) \wh or,
by another adjunction, a map \w[.]{\phi:\Sigma\sp{k+r-1}\bY\to\bW\sb{n-r}}

We show in \cite[\S 6]{BMeadS}) that the representatives in \w{E\sp{1}\sb{n-r,k+r-1}} of
the value of the differential \w{d\sp{r}[f]\in E\sp{r}\sb{n-r,k+r-1}}
are precisely the homotopy classes of the maps $\phi$ associated to various
choices of the extension \wref[.]{eqerrep} Moreover, the class of $\phi$ is precisely
the value of the $k$-invariant for the map \w{F:\partial \bone{r+1}\to\mX}
obtained by precomposing $\varphi$ with the map
\w{U\sb{\ast}:\DK(\bone{r+1})\to\DK(\Drn{r,-1}\op)} from the proof of
Proposition \ref{pperm}.
Thus in this case extracting the \wwb{r+1}st order information contained in \w{d\sp{r}}
from the \wwb{r-1}st $k$-invariant for $\mX$ is completely straightforward.

The differentials in the dual homotopy spectral sequence of a cosimplicial
object are treated in \cite[\S 7]{BMeadL}; those in the more common stable
spectral sequences of a tower (or sequence of objects) are special cases of
the two unstable versions, in the sense that any differential in the stable Adams
spectral sequence, say, may be identified with the corresponding differential in the
unstable Adams spectral sequence in the stable range.
Of course, the spectral sequences as a whole are different.
\end{mysubsection}

\begin{mysubsection}{Higher order homotopy operations}
\label{shoho}
The first known examples of second order homotopy invariants were Massey products
(in \cite{MassN}) and Toda brackets (in \cite{TodG}). In \cite{BBSenT}, a general
definition of $n$-th order Toda brackets was given (see also \cite{GWalkL}):
these were described as the last obstruction to extending a certain type of
homotopy-coherent diagram in a simplicially enriched category $\mX$, indexed by the
boundary of an \wwb{n+1}cube to the interior, and as such can again be recovered
directly from the corresponding $k$-invariant (see \S \ref{sbcube}).

For example, the original (secondary) Toda bracket corresponds to
\myxdiag[\label{eqtoda}]{
\bX\ar[rr]\ar[rd]\sb{f} \ar[dd] && \ast \ar[rd] \ar|(.5){\hole}[dd]& \\
& \bY\ar[rr] \ar[dd]\sb(0.3){g} && \ast \ar[dd]\\
\ast \ar|(.5){\hole}[rr] \ar[rd] && \ast  \ar[rd] & \\
& \bZ \ar[rr]\sb{h} && \bW
}
\noindent (with the homotopies along each facet \wh actually, nullhomotopies \wh
understood).

A more general approach uses the differentials in the homotopy spectral sequence of
certain types of simplicial space \w[,]{\Wd} as described in \S \ref{sdss},
to \emph{define} higher order operations in terms of maps from
\w{\DK(\Drn{n+1,m}\op)} to \w{M:=\map\sb{\mX}(\bS{k},\bW\sb{m})}
(see \cite{BBSenHS}).

For example, if we think of \wref{eqerrep} as the bottom truncated simplicial space
extended one dimension to the left by setting \w[,]{\bW\sb{n+1}=\Sigma\sp{k}\bY=\bS{k}}
with \w{d\sb{i}:\bW\sb{n+1}\to\bW\sb{n}} given by $f$ for \w{i=0} and by $\ast$
for \w[,]{i>0} then for \w{r=2} the permutohedron \w{P\sp{2}} is a hexagon, and the map
\w{\partial P\sp{2}\to M} is given by

\vspace{1.0cm}\quad

\myfigure[\label{eqhexagon}]{
\begin{picture}(100,180)(-100,-140)
%
%
\put(95,85){\line(-3,-2){30}}
\put(49,79){\scriptsize $d\sb{0}H\sb{01}$}
\put(97,87){\circle*{3}}
\put(83,92){\small $d\sb{0}d\sb{0}d\sb{0}$}
%
%
\put(100,85){\line(3,-2){30}}
\put(127,79){\scriptsize $H\sb{01}d\sb{0}$}
\put(132,63){\circle*{3}}
\put(137,60){\small $d\sb{0}d\sb{1}d\sb{0}$}
%
%
\put(132,60){\line(0,-1){30}}
\put(137,42){\scriptsize $d\sb{0}H\sb{02}$}
\put(132,28){\circle*{3}}
\put(135,20){\small $d\sb{0}d\sb{0}d\sb{2}$}
%
%
\put(130,26){\line(-3,-2){30}}
\put(116,8){\scriptsize $H\sb{01}d\sb{2}$}
\put(97,5){\circle*{3}}
\put(80,-6){\small $d\sb{0}d\sb{1}d\sb{2}$}
%
%
\put(64,28){\circle*{3}}
\put(26,25){\small $d\sb{0}d\sb{1}d\sb{1}$}
\put(65,26){\line(3,-2){30}}
\put(48,7){\scriptsize $d\sb{0}H\sb{12}$}
%
%
\put(64,63){\circle*{3}}
\put(28,60){\small $d\sb{0}d\sb{0}d\sb{1}$}
\put(64,60){\line(0,-1){30}}
\put(35,42){\scriptsize $H\sb{01}d\sb{1}$}
\end{picture}
}

\vspace{-4.7cm}\quad

\noindent Here \w{H\sb{ij}:d\sb{i}d\sb{j}\sim d\sb{j+1}d\sb{i}}
\wb{i<j} is a homotopy for the simplicial identity (these are
actually only needed on the right in \wref[,]{eqhexagon} since we
may assume \w{\Wd} is a strict simplicial space). In particular,
if we require each \w{\bW\sb{n}} to be a wedge of spheres, these
are higher order \emph{homotopy} operations in the strict sense,
in that they are modelled on ordinary homotopy groups in the
homotopy category of spaces, and the primary operations between
them.
\end{mysubsection}

\begin{example}\label{egswp}
If \w[,]{\bS{p}} \w[,]{\bS{q}} and \w{\bS{r}} are simply-connected
pointed spheres and
\begin{myeq}[\label{eqfatwedge}]
\bT{1}=\bT{1}(\bS{p},\bS{q},\bS{r})~:=~\{(x,y,z)\in\bS{p}\times\bS{q}\times\bS{r}~|\
  x=\ast~\text{or}\ y=\ast~\text{or}\ z=\ast\}
\end{myeq}
\noindent is the corresponding fat wedge, we have a homotopy-commuting cube
\myydiag[\label{eqhwp}]{
  \bS{p+q+r-2}\ar[rr]\sp{(-1)\sp{p+q+1}[\iot{p+q-1},\iot{r}]}
  \ar[rd]\sp(0.6){(-1)\sp{p+r+1}[\iot{p+r-1},\iot{q}]}
  \ar[dd]\sb{(-1)\sp{q+r+1}[\iot{q+r-1},\iot{p}]} &&
  \bS{p+q-1}\vee\bS{r} \ar[rd]\sp{[\iot{p},\iot{q}]\bot\iot{r}}
  \ar|(.5){\hole}[dd]\sb(0.3){[\iot{p},\iot{q}]\bot\iot{r}} & \\
  & \bS{p+r-1}\vee\bS{q}\ar[rr]\sb(0.75){[\iot{p},\iot{r}]\bot\iot{q}}
  \ar[dd]\sb(0.25){[\iot{p},\iot{r}]\bot\iot{q}} &&
  \bT{1}''' \ar[dd]\sp{\iot{}'''}\\
  \bS{q+r-1}\vee\bS{p} \ar|(.5){\hole}[rr]\sp(0.7){[\iot{q},\iot{r}]\bot\iot{p}}
  \ar[rd]\sb{[\iot{q},\iot{r}]\bot\iot{p}} && \bT{1}'' \ar[rd]\sp{\iot{}''} & \\
& \bT{1}' \ar[rr]\sb{\iot{}'} && \bT{1}~.
}
\noindent Here \w[,]{\bT{1}'} \w[,]{\bT{1}''} and \w{\bT{1}'''} denote
the subcomplexes of \w{\bT{1}} in which we omit the top cell of
\w[,]{\bS{p}\times\bS{q}} \w[,]{\bS{p}\times\bS{r}} and
\w[,]{\bS{q}\times\bS{r}} respectively, and the various maps $\iot{}$ are the obvious
fundamental class or inclusion in each case. The value of the corresponding
$k$-invariant in \w{\pi\sb{1}\map(\bS{p+q+r-2},\,\bT{1})} is the secondary Whitehead
product of \cite{HardHW} (see also \cite{GPorHO}).
\end{example}

\begin{remark}\label{rgenhho}
A yet more general definition of higher operations, based on an arbitrary
(finite, directed) indexing category $\J$, was given in \cite{BMarkH}.
It seems likely that these too can be derived directly from the $k$-invariants
of appropriate cubical complexes, by using a cubical replacement of \w[.]{\DK(\J)}
\end{remark}

\begin{mysubsection}{Classifying diagrams}
\label{scdiag}
Another important question in this context is classifying the realizations
of a given diagram \w{X:\Gamma\to\ho\cS} \wwh that is, the various lifts
\w{\widetilde{X}:\Gamma\to\cS} (if they exist). This was the original motivation
for introducing the cohomology of simplicial categories in \cite{DKanO,DKSmitO}.
In fact, the classification of all such realizations (as $\Gamma$ varies)
provides a full homotopy invariant for \wwb{\infty,1}categories
(see \cite{RenaP,FKRoveM}).

A global approach to this problem is to try to calculate the set of components
of the moduli space \w{\M(\Gamma)} of all such realizations \wh possibly
by using the spectral sequence associated to an appropriate tower of
moduli spaces of ``partial realizations'' (see \cite{DKanC}). Alternatively,
one could try to identify a particular realization, which we think of
as a map of simplicial categories \w[,]{\widehat{X}:\DK(\Gamma)\to\cS}
by induction over the skeleta \w[.]{\sk{n}\DK(\Gamma)} When the indexing
category $\Gamma$ is finite, \w{\DK(\Gamma)} is $n$-dimensional, say, and so there
is a last obstruction for distinguishing between two different lifts
\w{\widehat{X}\sb{i}:\DK(\Gamma)\to\cS} \wb{i=1,2} which agree on
\w[,]{\sk{n-1}\DK(\Gamma)} (or more generally, where we have a given weak
equivalence between the two restrictions. This difference obstruction is just
the \wwb{n-2}nd $k$-invariant for resulting prismatic diagram, as follows:
\end{mysubsection}

\begin{example}\label{egprism}
When \w{\Gamma=\bone{2}} is a square, the resulting prism is a cube, as in
\wref[,]{eqprism} with the top and bottom facets given by \w{\widehat{X}\sb{1}}
and \w{\widehat{X}\sb{2}} respectively, and the four vertical facets represent
the chosen equivalence on the $1$-skeleton (which need not be the identity).
\myudiag[\label{eqprism}]{
%
%
X \ar@<1ex>[rr]\sp{g} \ar|(.7){\hole}[rd]\sp{f}
\ar[dd]\sb{x}\sp{\simeq} &&
Z \ar[rd]\sp{k} \ar|(.5){\hole}[dd]\sb(0.7){z}\sp(0.7){\simeq} &&  \\
& Y \ar[rr]\sp(0.65){h} \ar[dd]\sp(0.3){y}\sb(0.3){\simeq}
\ar@{=>}[ru]\sb{\alpha} &&
W \ar[dd]\sp{w}\sb{\simeq} \\
%
%
X \ar|(.5){\hole}[rr]\sp(0.7){g'} \ar[rd]\sb{f'}
\ar@{=>}[ru]\sb{\gamma} \ar@/^2.1em/@{==>}[rruu]\sp(0.6){\delta}
&& Z \ar|(.23){\hole}[rd]\sp{k'} \ar@{==>}[ru]\sp{\eta} & \\
& Y \ar[rr]\sb{h'} \ar@{==>}[ru]\sp{\beta}
\ar@<1ex>@/_2.1em/@{=>}[rruu]\sb(0.4){\zeta} && W
}
\noindent We see that \w{\widehat{X}\sb{1}} and \w{\widehat{X}\sb{2}}
are not equivalent in \w{\cS\sp{\Gamma}} if and only if the obstruction $\beta$
for \wref{eqprism} (defined as in Example \ref{egboundcube}) does not vanish
for any choice of the vertical facets.
\end{example}

%
%
\sect{Obstruction Theory for the Realization of \ww{\EE{2}}-algebras}
\label{cotrea}

In this section, we will apply our homotopy limit decomposition techniques
to study another example in more detail: the extension of
an \ww{\EE{1}}-structure on a space $\bX$  to an \ww{\EE{2}}-structure.

\begin{defn}\label{dmonoid}
We denote by \w{\FreeBrCat} the $(2, 1)$-category of free braided monoidal categories
on $n$ generators. By thinking of \w{\FreeBrCat} as a category enriched in categories,
we can apply the nerve construction to each mapping space to obtain a simplicial
category, which we denote by \w[.]{\mFreeBrCat}
We will denote by \w{\FreeMon} and \w{\mFreeMon} the $1$-category of free monoids,
regarded respectively as a horizontally discrete $2$-category and a discrete
simplicial category.
\end{defn}

\begin{example}\label{exam10.1}
The \emph{braid category} $\B$ is the category which has objects the natural numbers
$\bN$, with arrows \w{n \to n} given by the braid group \w{B\sb{n}}
(see \cite{MCat}). There are no morphisms \w{m \to n} for \w[.]{m \neq n}
The \emph{free braided monoidal category on one generator} $\bB$ (described in
\cite[XI.5]{MCat}) has a monoidal product given by addition on $1$-morphisms, and
acts on morphisms by braid addition
\w[.]{b\sb{m, n} : B\sb{m} \times B\sb{n} \to B\sb{m+n}}.
The braiding  \w{\gamma\sb{m, n} : n + m \to m+n} is given by crossing $m$
strings over $n$ strings.

We will write \w{\bB\sp{n}}  for the free braided monoidal category on $n$ generators.
It is just \w[,]{\prod\sb{i=1}\sp{n} \bB } and on underlying categories \w[.]{\NN\sp{m}}
The morphisms
\w{\bB\sp{n} \to \bB} are determined by choices of $n$-tuples
\w{(x\sb{1}, \cdots, x\sb{n}) \in \NN\sp{n}} and permutations
\w[,]{\sigma \in S\sb{n}} and are given by the formula
\begin{myeq}\label{mult1}
(m\sb{1}, \cdots m\sb{n})~\mapsto~\Sigma\sb{i=1}\sp{n} x\sb{i} \cdot m\sb{\sigma(i)}
\end{myeq}
\noindent Natural transformations exist only between morphisms determined by
\w{((x\sb{1}, \cdots, x\sb{n}), \sigma)} and \w[,]{((x\sb{1}, \cdots, x\sb{n}), \sigma')}
  and are generated by
\begin{myeq}\label{nat}
\Sigma\sb{i=1}\sp{j-1} id\sb{x\sb{\sigma(i)}} \cdot m\sb{\sigma(i)} +
\gamma\sb{x\sb{\sigma(j)} \cdot m\sb{\sigma(j)}, x\sb{\sigma(j+1)} \cdot
m\sb{\sigma(j+1)}} + \Sigma\sb{i=j+2}\sp{n} id\sb{x\sb{\sigma(i)}} \cdot m\sb{\sigma(i)}
\end{myeq}
\end{example}

In \cite{HMeadS}, the authors established an equivalence of $\infty$-categories
$$
\mu\sp{(-)}:\mMnd\sb{\Fin}(\cS)\leftrightarrows \Th\sb{\Fin}
$$
between $\Fin$-nervous monads and \ww{\Fin}-theories, the latter of which are certain
essentially
surjective functors of $\infty$-categories \w[.]{\Fin \to \A}
There is also an equivalence of $\infty$-categories:
$$
\Model(\Th(M)) \simeq\Alg\sb{M}
$$
between \ww{\Th(M)}-models, where \w{\Model(\Th(M))} are simply finite product
preserving functors \w[,]{F :\Th(M) \to\cS} and algebras for the monad $M$.
In essence, this allows one to describe algebras of monads in simpler terms as models
of theories. The case of the free operad monads is discussed in \cite[Section 8]{HMeadS}.

We can use the description of operad algebras as models of theories to develop
an obstruction theory for \ww{\EE{2}}-algebras.
For any space $\bX$, let \w{(\sSet)\sb{\bX}} be the full simplicial subcategory
of \w{\sSet} with objects the cartesian products \w{\bX\sp{n}} \wb[.]{n=1,2,\dotsc}

\begin{prop}\label{lem10.2}
An \ww{\EE{1}}-algebra structure on a space $\bX$ is equivalent to
a functor  \w{\psi:\DK(\mFreeMon\op)\to\sSet} sending the object $1$ to $\bX$.
Extending this to an \ww{\EE{2}}-algebra structure is the same as
finding an extension
\myudiag[\label{eqlifting}]{
\DK(\mFreeMon\op) \ar[r]\sb{\psi} \ar[d]\sb{\Phi} & (\sSet)\sb{\bX} \\
\DK(\mFreeBrCat\op)~, \ar@{.>}[ur] &
}
\noindent for $\Phi$ as in Example \ref{exam10.1}.
\end{prop}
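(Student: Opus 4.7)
The plan is to apply the equivalences of \cite{HMeadS} twice, once for $\EE{1}$ and once for $\EE{2}$, and then identify the resulting $\Fin$-theories with the nerves of $\mFreeMon\op$ and $\mFreeBrCat\op$ respectively. First, recall that both $\EE{1}$ and $\EE{2}$ arise as the monads $M\sb{1}$ and $M\sb{2}$ associated to the little-intervals and little-disks operads, and both are $\Fin$-nervous in the sense of \cite{HMeadS}. Under the equivalence $\mu\sp{(-)}\colon\mMnd\sb{\Fin}(\cS)\simeq\Th\sb{\Fin}$, an $\EE{i}$-algebra on a space $\bX$ is the same data as a finite-product-preserving functor $F:\Th(M\sb{i})\to\cS$ together with an identification $F(1)\simeq\bX$. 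Since the models functor sends the natural comparison $\EE{1}\to\EE{2}$ to the induced functor $\Phi:\Th(M\sb{1})\to\Th(M\sb{2})$, an extension of the $\EE{1}$-structure on $\bX$ to an $\EE{2}$-structure corresponds exactly to a lift of models along $\Phi$, and we may restrict to the essential image $(\sSet)\sb{\bX}$.

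Next, I would identify the theories combinatorially. For $\EE{1}$, the monad $M\sb{1}$ is weakly equivalent to the free associative monoid monad (both give $A\sb{\infty}$-algebras equivalent to strictly associative ones on path-connected spaces, and more generally the categorical theories agree after taking $\pi\sb{0}$ and realizing as Segal objects). This identifies $\Th(M\sb{1})$ with the nerve of $\mFreeMon\op$: the morphisms $n\to 1$ are words of length $n$ in a free monoid, i.e., order-preserving choices, which are exactly finite-product-preserving representables. For $\EE{2}$, the classical result of Fiedorowicz (and its reformulations by Berger and others) says that $\EE{2}$-algebras in spaces correspond to algebras over the braided operad, whose theory in the sense of \cite{HMeadS} recovers $\mFreeBrCat\op$: the morphisms in \ref{mult1} together with the braiding natural transformations in \ref{nat} give exactly the classifying space of configurations required by the little $2$-disks operad. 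This is the step where we need to match \w{\Map\sb{\mFreeBrCat}(n,1)} with $E\Sigma\sb{n}\sb{/B\sb{n}}$-like classifying spaces of braids, which is the content of Fiedorowicz's recognition.

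With the theories identified, the Dwyer-Kan resolutions appear as cofibrant replacements: models in the $\infty$-categorical sense are maps out of cofibrant simplicial categories, and $\DK(\mFreeMon\op)$ and $\DK(\mFreeBrCat\op)$ are the appropriate such replacements in \w[.]{\sCat} Thus a (derived) model of $\Th(M\sb{1})$ with $F(1)\simeq\bX$ becomes precisely a simplicial functor $\psi:\DK(\mFreeMon\op)\to(\sSet)\sb{\bX}$ with $\psi(1)=\bX$, and similarly for $\mFreeBrCat\op$. The comparison $\Phi:\DK(\mFreeMon\op)\to\DK(\mFreeBrCat\op)$ is the simplicial resolution of the evident symmetrization functor sending a free monoid to its underlying braided monoidal category; by naturality of the $\mu\sp{(-)}$-equivalence, lifting $\psi$ along $\Phi$ is equivalent to lifting the corresponding model, which is equivalent to promoting the $\EE{1}$-structure to an $\EE{2}$-structure.

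The main obstacle is the identification of $\Th(M\sb{2})$ with $\fN(\mFreeBrCat\op)$ at the level of $(\infty,1)$-categories rather than just on homotopy categories, since the morphism spaces on the theory side are configuration spaces of little disks while those in $\mFreeBrCat$ are nerves of braid groups. This requires checking that the comparison functor induces equivalences on all mapping spaces \w[,]{\Map(n,1)} which reduces to the Fadell-Neuwirth identification of configuration spaces with classifying spaces of pure braid groups and the standard cofibrancy arguments to pass from a strict $2$-categorical model to the $(\infty,1)$-theory. Granting this identification (which is standard in the theory of $\EE{2}$-operads), the rest of the argument is a formal application of the framework of \cite{HMeadS} combined with the compatibility of Dwyer-Kan resolutions with the model-theoretic definition of algebras.
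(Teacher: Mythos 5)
Your proposal is correct in spirit and follows the same underlying route as the paper, which is to invoke the $\Fin$-theory framework of \cite{HMeadS} and identify $\mFreeMon$ and $\mFreeBrCat$ as the (pre-)theories for $\EE{1}$ and $\EE{2}$. The main difference is in the amount of work you attempt to redo: the paper's entire proof is a two-sentence citation of \cite[Examples 8.12, 8.15]{HMeadS}, where the identification of $\mFreeMon$ and $\mFreeBrCat$ as pre-theories for $\EE{1}$- and $\EE{2}$-spaces is already carried out, plus the observation that an extension to $\DK(\mFreeBrCat\op)$ is automatically product-preserving because $\Phi$ is. You instead reprove the content of those examples from scratch, bringing in Fiedorowicz's recognition theorem and the Fadell-Neuwirth identification of configuration spaces with classifying spaces of (pure) braid groups. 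This is fine as supplementary exposition, but it is not needed for the proposition, and you correctly flag that you are granting the key comparison as ``standard.''

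Two smaller remarks. First, you speak of $\Th(M\sb{i})$ as if it literally equals $\fN(\mFreeMon\op)$ or $\fN(\mFreeBrCat\op)$; the paper is more careful and says these are the \emph{pre-theories}, and the Dwyer-Kan resolution plays the role of a cofibrant presentation, so that maps out of $\DK(\mFreeMon\op)$ model derived product-preserving functors out of the theory. Your proposal elides this distinction, though it does not affect the conclusion. Second, you do not include the paper's closing observation that any extension of $\psi$ along $\Phi$ is automatically product-preserving (because $\Phi$ is essentially surjective on objects and the base map $\psi$ already lands in $(\sSet)\sb{\bX}$); you handle this implicitly by restricting to $(\sSet)\sb{\bX}$, but the explicit remark is what makes the statement of the proposition (which does not a priori require product-preservation of the lift) true as stated. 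Your description of $\Phi$ as ``the evident symmetrization functor'' is a slight misnomer --- it is the functor sending a free monoid to the underlying free \emph{braided} monoidal category --- and the identification ``$E\Sigma\sb{n}/B\sb{n}$'' is imprecise notation for what should just be $BB\sb{n}\simeq\mathrm{Conf}\sb{n}(\RR\sp{2})/\Sigma\sb{n}$, but neither affects the substance of the argument.
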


\begin{proof}
This follows from the description of \w{\mFreeMon} and
\w{\mFreeBrCat} as the pre-theories describing \ww{\EE{1}}- and
\ww{\EE{2}}-spaces, respectively (see \cite[Examples 8.12,8.15]{HMeadS}).
Note that any extension to \w{\DK(\mFreeBrCat\op)} is automatically
product-preserving, since $\Phi$ is.
\end{proof}

See \cite{FWilH} for another approach to a variant of our problem.

\begin{remark}\label{myrmk}
We get the extension by lifting along the Postnikov tower for
\w{(\sSet)\sb{\bX}} as a simplicial category. Composition in
\w{\DK(\mFreeBrCat)} is a monomorphism in each simplicial degree,
so we can apply Theorem \ref{thm5.8} to write \w{\DK(\mFreeBrCat)}
as an homotopy colimit of $n$-cubes, and use this description to
identify the obstruction to lifting. In particular, if for some
boundary of an cube the obstruction does not vanish, this
precludes the existence of a lift to the next level.

Note that an \ww{\EE{1}}-algebra structure on a space $\bX$
is equivalent to giving it the structure of a strictly associative monoid (see
\cite[Theorem 13.4]{MayG}), so we may think of $\bX$ as a simplicial category $\mX$
with a single object. Because the monoidal structure in
\w{\sSet} is Cartesian, we see that we do not need the full category
\w{(\sSet)\sb{\bX}} in \wref[,]{eqlifting} but only those components of the mapping
spaces of \w{\sSet} corresponding to the categorical and product structure maps
of $\mX$.
\end{remark}

\begin{mysubsection}{The first obstruction}
\label{sfo}
Let \w{\Phi\colon \DK(\mFreeBrCat) \to \sSet} be a morphism which determines an
\ww{\EE{2}}-algebra structure on a space $\bX$. By the description of \w{\DK} in
\cite{DKanL}, \w{\DK(\mFreeBrCat)\sb{0}}  is freely generated by morphisms of the form
 \w[,]{\NN\sp{m} \to \NN\sp{k}} determined by the universal property of products
 by maps of the form \wref[.]{mult1} Maps of the form
 \w{(m\sb{1}, \cdots, m\sb{n})\colon\NN\sp{n} \to \NN} are taken to $n$-fold
multiplication maps
$$
\bX\sp{n} \to \bX, (x\sb{1}, \cdots, x\sb{n}) \mapsto x\sb{1}\sp{m\sb{\sigma(1)}}
\cdots x\sb{n}\sp{m\sb{\sigma(n)}}~,
 $$
\noindent associative up to homotopy.

By the description of \w{\DK} in \cite{DKanL}, the morphisms of
\w{\DK(\mFreeBrCat)\sb{1}} are of the form
$$
((\eta\sb{1}\sp{1}) \cdots (\eta\sb{n}\sp{k\sb{1}})) \cdots
((\eta\sb{n}\sp{1}) \cdots (\eta\sb{n}\sp{k\sb{m}}))
$$
where \w{\eta\sb{i}\sp{j}} is generated by morphisms of the form \wref[,]{nat}
which is taken to a natural transformation of maps
\w{\psi\sb{1}, \psi\sb{2} : \bX\sp{n} \to \bX} given by
\begin{myeq}\label{eqxx}
\begin{split}
\psi\sb{1}(x\sb{1} \cdots, x\sb{n})~=&~ x\sb{\sigma(1)}\sp{m\sb{\sigma(1)}} \cdots
  x\sb{\sigma(j)}\sp{m\sb{\sigma(j)}} x\sb{\sigma(j+1)}\sp{m\sb{\sigma(j+1)}} \cdots
  x\sb{n}\sp{m\sb{\sigma(n)}}\\
\psi\sb{2}(x\sb{1} \cdots, x\sb{n})~=&~
  x\sb{\sigma(1)}\sp{m\sb{\sigma(1)}} \cdots x\sb{\sigma(j+1)}\sp{m\sb{\sigma(j+1)}}
  x\sb{j}\sp{m\sb{\sigma(j)}}  \cdots x\sb{n}\sp{m\sb{\sigma(n)}}~.
\end{split}
\end{myeq}
\noindent Thus, a morphism \w{((\eta\sb{i}\sp{j}))} is taken to a morphism that commutes
the factors in an ($n$-fold) multiplication. In order to construct this, we may
choose a unique natural transformation \w{s : y \cdot x \simeq x \cdot y} and
take \wref{eqxx} to be \w[.]{\Id\sp{j-1} \times s \times \Id\sp{n-j-1}}

As an example, a typical square in the image of \w{\DK(\mFreeBrCat)} is given by
$$
\xymatrix
{
  \bX\sp{3}  \ar[d]\sb{\Id \times m}
  \ar[rr]\sp{(m\sigma)\times\Id}
  && \bX\sp{2} \ar[d]\sp{m} \\
\bX\sp{2} \ar@{=>}[rru]\sb{\gamma} \ar[rr]\sb{m} &&  \bX
 }
$$
where \w{\sigma:\bX\sp{2}\to \bX\sp{2}} switches the factors and $m$ is
multiplication. Here $\gamma$ is the composite
\w[,]{a \circ m(s \times\Id)} for \w{s : m\sigma \simeq m} a chosen braiding
(commuter), and \w{a : x \cdot (y \cdot z) \simeq (x \cdot y) \cdot z}
an associator.

In this language, the three-dimensional cube in the image of \w{\DK(\mFreeBrCat)}
which describes the first obstruction class is given by
\myrdiag[\label{eqglobssp}]{
\bX\sp{3} \ar[rr]\sp{\tau\sp{0}} \ar[rd]\sp(0.6){\tau\sp{1}} \ar[dd]\sb{\tau\sp{2}} &&
  \bX\sp{3} \ar[rd]\sp{p} \ar|(.5){\hole}[dd]\sb(0.2){q}& \\
  & \bX\sp{3} \ar[rr]\sp(0.3){p} \ar[dd]\sb(0.25){q} && \bX\sp{2} \ar[dd]\sp{m}\\
\bX\sp{3}  \ar|(.5){\hole}[rr]\sb(0.7){q} \ar[rd]\sb{p} && \bX\sp{2} \ar[rd]\sp{m} & \\
& \bX\sp{2}  \ar[rr]\sb{m} && \bX,
}
\noindent where $\tau$ is the cyclic shift \w[,]{(a,b,c)\mapsto(b,c,a)}
$p$ multiplies the first two entries, $q$ multiplies the last two entries and
transposes the resulting pair, and $m$ is the multiplication.

In order to abstract from \wref{eqglobssp} a more manageable ``local'' obstruction
to having an \ww{\EE{2}}-algebra structure on $\bX$, we think of it as a
simplicial category $\mX$ with a single object $\star$ (see \S \ref{myrmk}): thus points
in $\bX$ (thought of as a loop space) are represented as arrows in $\mX$, and the group
structure is given by composition. A point in the initial vertex of the cube
\wref{eqglobssp} then corresponds to three arrows \w[,]{e,f,g:\star\to\star}
and tracking the various compositions there reduces to making choices of
braidings \w[,]{\beta:fg\sim gf} and so on, in
\myxdiag[\label{eqcpt}]{
  \star\ar[rr]\sp{g} \ar[rd]\sp(0.6){f} \ar[dd]\sb{e} &&
  \star \ar[rd]\sp{f} \ar|(.5){\hole}[dd]\sp(0.7){e}& \\
& \star\ar[rr]\sp(0.3){g} \ar[dd]\sb(0.25){e} && \star \ar[dd]\sp{e}\\
\star \ar|(.5){\hole}[rr]\sb(0.7){g} \ar[rd]\sb{f} && \star\ar[rd]\sp{f} & \\
& \star \ar[rr]\sb{g} && \star
}
\noindent (the choices should be the same for parallel facets).

Now recall that for any $H$-group $\bX$ and pointed map
\w[,]{\alpha\times\beta:\bA\times\bB\to\bX\times\bX} the corresponding
\emph{Samelson product} is defined to be the map
\w{\bA\wedge\bB\to\bX} induced by the commutator map \w{[-,-]:\bX\times\bX\to\bX}
(see \cite[Ch.\ X, \S 5]{GWhE}):
travelling along the boundary of each $2$-dimensional
facet of \wref{eqcpt} in a fixed direction yields a commutator \wh e.g.,
\w{fgf\sp{-1}g\sp{-1}} along the top square \wh and thus a Samelson product in
$\bX$ for \w[.]{\bA=\bB=\bS{1}}

To identify the map described by the boundary of the cube \wref{eqcpt} as a whole,
consider the cofibration sequence
$$
\bS{2}~\xra{h}~\bT{1}~\xra{\iota}~\bS{1}\times\bS{1}\times\bS{1}~\to~\bS{3}~,
$$
\noindent with the fat wedge \w{\bT{1}=\bT{1}(\bS{1},\bS{1},\bS{1})} of
\wref{eqfatwedge} corepresenting initial data consisting of
three given maps \w[,]{\alpha:\bS{1}\to\bX} \w[,]{\beta:\bS{1}\to\bX} and
\w{\gamma:\bS{1}\to\bX} into a loop space \w{\bX=\Omega\bY}
with all pairwise Samelson products vanishing. Here we specialize
to \w[,]{\bA=\bB=\bC=\bS{1}} though this may be defined more generally
(see \cite[\S 6]{GPorHP}).

Let \w{\varphi:\bS{1}\times\bS{1}\times\bS{1}\to\bY\sp{\bT{1}}} cover
\w{\alpha\times\beta\times\gamma} under
\w{\iota\sp{\ast}:\bY\sp{\bT{1}}\to\bY\sp{\bS{1}\vee\bS{1}\vee\bS{1}}=\bX\sp{3}}
(for $\iota$ the inclusion). The \emph{secondary Samelson product}
\w{\lra{\alpha,\beta,\gamma}\in[\bS{1}\wedge\bS{1}\wedge\bS{1},\bX]}
is then defined to be \w[,]{q\circ h\sp{\ast}(\varphi)}
where
$$
q:[\bS{1}\times\bS{1}\times\bS{1},\Omega\bY]=[\Sigma(\bS{1}\times\bS{1}\times\bS{1}),\bY]
~\to~
[\Sigma(\bS{1}\wedge\bS{1}\wedge\bS{1}),\bY]=[\bS{1}\wedge\bS{1}\wedge\bS{1},\Omega\bY]
$$
\noindent is the projection under the standard splitting (see
\cite[Ch.\ X, (8.20)]{GWhE}). By \cite[Theorem 6.12]{GPorHP}, it
corresponds to the secondary Whitehead product under the loop-suspension adjunction.

Thus \w{h\sp{\ast}:\bY\sp{\bT{1}}\to\Omega\sp{2}\bY} is universal for
the secondary Samelson product: it is null-homotopic if $\bY$ is an $H$-space,
by \cite[Theorem 6.4]{GPorHP} \wh so in particular, if $\bX$ is an
\ww{\EE{2}}-space (compatibly with the given loop structure).
We deduce that the obstruction class $\beta$ of Theorem \ref{tpbclass} corresponding to
\wref{eqglobssp} can be thought of as the \emph{global secondary Samelson product},
which specializes to the usual secondary Samelson product
for each choice of points $e$, $f$, and $g$ in $\bX$.
\end{mysubsection}

\begin{examples}
\label{egcpt}
Consider the homotopy-abelian loop space
$$
\bX=\Omega\bY=\Po{4}\Omega\CP{2}\hs\text{for}\hs
\bY:=\Po{5}\CP{2}
$$
(see \cite[3.10]{BGanN}): the non-trivial secondary Samelson
product in \w{\pi\sb{4}\bX\cong\pi\sb{5}\CP{2}} (see \cite[Corollary 2]{GPorHO}) is
thus an obstruction to $\bX$ being an \ww{\EE{2}}-space.

Another example of a homotopy-abelian $H$-space which has an \ww{\EE{1}}- but not
an \ww{\EE{2}}-algebra structure is \w[,]{\Omega\CP{3}} which is homotopy-abelian
(see \cite[Theorem 1.18]{StasHA}) \wh in fact, it is $H$-equivalent to
\w{\bS{1}\times\Omega\bS{7}} \wwh but is not an \ww{\EE{2}}-space even rationally
(see \cite{ACuB}). In this case the obstruction is the
third-order Whitehead product in \w[.]{\pi\sb{7}\CP{3}}
\end{examples}

%
%

\end{document}